\newcommand{\Z}{\mathbb{Z}}               
\newcommand{\Q}{\mathbb{Q}}              
\newcommand{\Gr}{\mathrm{Gr}}      
\newcommand{\frakp}{\mathfrak{p}}   
\newcommand{\al}{\alpha}     
\newcommand{\be}{\beta}
\newcommand{\ep}{\epsilon}  
\newcommand{\la}{\lambda}   
\newcommand{\om}{\omega}   
\newcommand{\de}{\delta}  
\newcommand{\ka}{\kappa}
\newcommand{\Ga}{\Gamma}
\newcommand{\ga}{\gamma}
\newcommand{\pair}[1]{\langle #1\rangle} 
\newcommand{\Up}{\mathfrak{h}^*}  
\newcommand{\Upd}{\mathfrak{h}}   
\newcommand{\cl}{\Lambda}            
\newcommand{\rl}{\cl_r}                    
\newcommand{\ro}{\Phi}  
\newcommand{\sroot}{\Pi}  
\newcommand{\proot}{\ro_+} 
\newcommand{\nroot}{\ro_-}  
\newcommand{\Car}{A}     
\newcommand{\sdroot}{\sroot^\vee}  
\newcommand{\len}[1]{\ro_{#1}} 
\newcommand{\nila}{\mathcal{N}}
\newcommand{\lbr}{[\hspace{-1.5pt}[}  
\newcommand{\rbr}{]\hspace{-1.5pt}]}  
\newcommand{\FGR}[3]{#1 \lbr #2 \rbr_{#3}} 
\newcommand{\RcF}{\FGR{R}{\cl}{F}} 
\newcommand{\IF}{\mathcal{I}_F}    
\newcommand{\QW}{Q_W}
\newcommand{\HF}{\mathbf{H}_F}
\newcommand{\TR}{{\triangle}}
\newcommand{\DF}{{\mathbf{D}_F}}
\newcommand{\Dem}{\Delta}                  
\newcommand{\cC}{C}          
\newcommand{\oC}{Y}        
\newcommand{\DcF}{\mathbf{D}_{F}}    
\newcommand{\Tr}{\triangle}
\newcommand{\DcFa}{\mathbf{D}_a}
\newcommand{\DcFd}{\mathbf{D}_F^*} 
\newcommand{\unit}{\mathbf{1}}   
\DeclareMathOperator{\Hom}{\mathrm{Hom}}
\DeclareMathOperator{\rank}{\mathrm{rank}}
\DeclareMathOperator{\End}{\mathrm{End}}
\DeclareMathOperator{\Spec}{\mathrm{Spec}}
\theoremstyle{plain}
\newtheorem{theo}{Theorem}[section]
\newtheorem{prop}[theo]{Proposition}
\newtheorem{lem}[theo]{Lemma}
\newtheorem{cor}[theo]{Corollary}
\theoremstyle{definition}
\newtheorem{defi}[theo]{Definition}
\newtheorem{rem}[theo]{Remark}
\newtheorem{example}[theo]{Example}
\numberwithin{equation}{section}   
\begin{document}



\title{Formal affine Demazure and Hecke algebras of Kac-Moody root systems}

\author{Baptiste Calm\`es}
\author{Kirill Zainoulline}
\author{Changlong Zhong}
\thanks{ B. C. is support by the French Agence Nationale de la Recherche (ANR) under reference ANR-12-BL01-0005. K. Z. was partially supported by the NSERC Discovery Grant RGPIN-2015-04469. C. Z. was supported by PIMS and NSERC grants of S. Gille and V. Chernousov in 2014, and would like to thank S. Kumar, T. Zhang and G. Zhao for helpful discussions, and to the Max Planck Institute for Mathematics for hospitality during the visit in May, 2014.}
\subjclass[2010]{20C08, 20G44, 14F43, 57T15}
\maketitle

\begin{abstract}
We define the formal affine Demazure algebra and formal affine Hecke algebra associated to a Kac-Moody root systems. We prove the structure theorems of these algebras, hence, extending several results and constructions (presentation in terms of generators and relations, coproduct and product structures, filtration by codimension of Bott-Samelson classes, root polynomials and multiplication formulas) that were previously known for finite root systems. 
\end{abstract}

\section{Introduction}

The nil-Hecke algebra and $0$-Hecke algebra were constructed by Kostant-Kumar in \cite{KK86, KK90} in the study of equivariant singular cohomology and equivariant $K$-theory of Kac-Moody flag varieties. These algebras are  generated by the Demazure operators (also known as BGG operators or divided difference operators) satisfying braid relation; they act on their duals by the convolution action of equivariant singular cohomology (resp. equivariant $K$-theory) on itself. 

In \cite{HMSZ, CZZ1, CZZ2, CZZ3} these constructions were extended to an arbitrary algebraic oriented cohomology theory (corresponding
to a one-dimensional commutative formal group law $F$) of a usual flag variety, i.e. for finite root systems. The resulting algebra, called the formal affine Demazure and denoted $\DcF$, 
satisfies a twisted braid relation instead of the usual one and its dual is isomorphic to the respective oriented cohomology ring.
In particular, the nil-Hecke algebra and the $0$-Hecke algebra of Kostant-Kumar correspond to the additive and the multiplicative formal group law $F$ respectively. 

In the present paper,  we extend the previous constructions to an arbitrary Kac-Moody root system (determined by a generalized Cartan matrix), hence, completing the picture. This was originally a question raised by Shrawan Kumar. Note that the key result in this setup is the so-called structure theorem, which describes $\DcF$ by its polynomial representation. To prove it in the additive and multiplicative case, Kostant-Kumar essentially relied on the study of zeros and poles of such polynomial functions. For a general $F$, polynomials turn into formal power series, so we no longer can look at zeros or poles. To overcome this difficulty and to prove the generalized version of the structure theorem (Theorem~\ref{thm:mainDem}) we use the language of prime ideals and some general position arguments (see Section~\ref{sec:genpos}).

Having the structure theorem in hands we extend most of the results and properties concerning $\DcF$
following the same proofs as in the finite case.
For instance, we provide the presentation of $\DcF$ in terms of generators and relations (Corollary~\ref{cor:present}), hence, generalizing the result of \cite{Le}. We introduce the coproduct structure on $\DcF$ and, hence, the product on its dual $\DcFd$, hence, extending \cite{CZZ1}.
We define filtrations on $\DF$ and on $\DF^*$, and show that the corresponding associated graded rings are isomorphic to the nil-Hecke algebra and its dual, respectively (Proposition~\ref{prop:filtration}). 
In Section~\ref{sec:formhecke} we introduce and study the formal affine Hecke algebra $\HF$, extending definitions of \cite{HMSZ} and \cite{ZZ}. In the last section, we discuss formal root polynomials and various multiplication formulas, hence, extending some results of \cite{LZ}.


\section{Preliminaries}

\subsection{Generalized Cartan matrices and root data}
We introduce notation and list basic properties of the root datum associated to a generalized Cartan matrix following \cite{Ka90,Ku02}. We recall
definition of the Demazure lattice due to \cite{Le}.

Let $\Car:=(a_{ij})_{1\le i,j\le n}$ be a generalized Cartan matrix (i.e., $a_{ii}=2$, $-a_{ij}\in \Z_+$ for all $i\neq j$, where $\Z_+$
is the set of nonnegative integers, and $a_{ij}=0$ $\Leftrightarrow$ $a_{ji}=0$). Choose a triple $(\Upd,\sroot,\sdroot)$, unique up to isomorphism,
where $\Upd$ is a complex vector space of rank $2n-\rank A$, 
 \[\sroot=\{\al_1,\ldots,\al_n\}\subset \Up, \quad \sdroot=\{\al_1^\vee, \ldots,\al_n^\vee\}\subset \Upd\] are linearly independent subsets of simple roots and simple coroots respectively, satisfying  $a_{ij}=\pair{\al_j, \al_i^\vee}$. Such a triple is called the root datum corresponding to the matrix $A$.
 
Let $W$ be the Weyl group of the root datum, that is, the group generated by fundamental reflections \[s_i\colon \la\mapsto \la-\pair{\la, \al_i^\vee} \al_i,\quad \lambda\in \Up.\] It is a Coxeter group, and for  $i\neq j$, the order $m_{ij}$ of $s_is_j$ is equal to $2,3,4,6,\infty$ when $a_{ij}a_{ji}$ is $0,1,2,3,\ge 4$, respectively.
Let \[\ro=\{w (\al_i)\mid w\in W,\; 1\le i\le n\}\] denote the set of (real) roots. If $\al=w(\al_i)$, we define the reflection $s_\al(\la)=\la-\pair{\la,\al^\vee}\al$ where $\al^\vee=w(\al_i^\vee)$. We then have $s_\al=ws_iw^{-1}$.

Let $\proot=\ro\cap \sum_i\Z_+\al_i$ and $\nroot=-\proot$ denote the sets of positive and negative (real) roots respectively. We have
$\ro=\proot \amalg \nroot$.
For each $w\in W$, define 
\[
\len{w}=\{\al\in \proot\mid w^{-1}(\al)\in \nroot\}.
\]
The expression $w=s_{i_1}\ldots s_{i_l}\in W$ is called reduced if $l$ is minimal possible among all representations of $w\in W$ as a product
of the $s_i$ and in this case $l=\ell(w)$ is called the length of $w$. We then have
\begin{enumerate}
\item For each $w\in W$, $\al_i\in \sroot$, we have 
\[
\ell(s_iw)>\ell(w) \Leftrightarrow w^{-1}(\al_i)\in \proot\quad\text{ and }\quad\ell(s_iw)<\ell(w) \Leftrightarrow w^{-1}(\al_i)\in \nroot,
\]
\item $\len{s_i}=\{\al_i\}$, 
\item For a reduced expression $w=s_{i_1}\ldots s_{i_l}$ we have 
\[
\len{w}=\{\al_{i_1}, s_{i_1} \al_{i_{2}}, \ldots, s_{i_1}\cdots s_{i_{l-1}} \al_{i_l}\},\text{ in particular }\;\ell(w)=|\len{w}|.
\]
\end{enumerate}

Let $\rl=\sum_i\Z\al_i$ and $\rl^\vee=\sum_i\Z \al_i^\vee$ denote the root  and coroot lattices, respectively. Let $\cl$ be
a formal Demazure lattice of \cite[Def.~3.1]{Le}, i.e., a finitely generated free abelian subgroup of $\Upd^*$ such that $\pair{\cl,\rl^\vee} \subset \Z$ and
every simple root can be extended to a $\Z$-basis of $\cl$, i.e.,  
the coordinates of every simple root with respect to some $\Z$-basis of $\cl$ form a unimodular vector.

Observe that all these lattices are invariant under the action by $W$. 
We refer to \cite[\S3]{Le} for their basic properties  and examples.

\subsection{Formal Demazure operators}
We define formal Demazure algebra and Demazure operators
following \cite{CPZ, HMSZ, CZZ1, Le}.

Let $F$ be a one-dimensional commutative formal group law over a commutative ring $R$. We sometimes write $x+_Fy=F(x,y)$, and denote by $-_Fx\in R\lbr x\rbr$ the formal inverse of $x$.

Examples of formal group laws include the additive formal group law $F_a(x,y)=x+y$, the multiplicative formal group laws $ F_m(x,y)=x+y-\be xy$ for $\be \in R$, and the universal formal group law $F_u$ over the Lazard ring.

Let $\cl$ be the Demazure lattice and let $\RcF=R\lbr x_\la|\la\in {\cl}\rbr/J$, where $J$ is the closure of the ideal generated by $x_0$ and $x_{\la+\mu}-F(x_\la,x_\mu)$ with $\la,\mu\in \cl.$
$\RcF$ is called  the formal group algebra \cite{CPZ}. 
\begin{example}\label{ex:FaFm}
\begin{enumerate}
\item If $F=F_a$, then $\RcF\cong S_R^*(\cl)^\wedge$ is the completion of the symmetric algebra at the augmentation ideal. \item If $F=F_m$, then $\RcF\cong R[\cl]^{\wedge}$ is the completion of the group ring.
\end{enumerate}
\end{example}
Observe that the construction of $\RcF$ is functorial with respect to homomorphisms of rings $R\to R'$, homomorphisms of lattices $\cl \to \cl' $ and homomorphisms of formal group laws $F\to F'$. Moreover, the action of $W$ on $\cl$ induces an action of $W$ on $\RcF$ by $w(x_\la)=x_{w(\la)}$, $\la\in\cl$.

There is an augmentation map $\ep\colon\RcF\to R, x_\la\mapsto 0$, and let $\IF$ denote the kernel. We then have a filtration 
\[
\RcF=\IF^0\supset \IF\supset \IF^2\supset \cdots \supset \IF^i\supset \cdots,
\]
and define $\IF^{-i}=\RcF$ for $i>0$. The associated graded ring is denoted by 
\[
\Gr\RcF=\bigoplus_{i\ge 0}\IF^i/\IF^{i+1}.
\]
By \cite[Lemma 4.2]{CPZ}, we have $\Gr\RcF\cong S_R^*(\cl)$. For each   $q\in \IF^i\backslash \IF^{i+1}$, we say that $\deg q=i$ and denote by $(q)_i$ its image via the map
\[
\IF^i/\IF^{i+1}\hookrightarrow \Gr\RcF\cong S_R^*(\cl).
\]
We define $(q)_j=0$ for $j<\deg q$.

\begin{lem}\label{lem:irred} Assume that $R$ has characteristic 0. Let $\al\in \Phi_+$.
\begin{itemize}
\item[(1)]  $x_\al$ is irreducible in $\RcF$ for all positive root $\al$.
\item[(2)] If $x\in \RcF$ such that $x_\al|x_\be x$ for some positive roots $\al\neq \be$,  then $x_\al|x.$
\end{itemize}
\end{lem}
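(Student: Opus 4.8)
The plan is to identify $\RcF$ with a ring of formal power series over $R$ in which $x_\al$ becomes one of the coordinates, and then to read off both statements from elementary commutative algebra. First I would establish that $\al$ extends to a $\Z$-basis of $\cl$. Write $\al=w(\al_i)$ with $w\in W$ and $\al_i\in\sroot$; the defining property of the Demazure lattice provides a $\Z$-basis $\{\al_i,f_2,\dots,f_n\}$ of $\cl$ (with $n=\rank\cl$), and since $w$ acts on $\cl$ by a lattice automorphism, $\{\al,w(f_2),\dots,w(f_n)\}$ is again a $\Z$-basis. By the structure of formal group algebras \cite{CPZ} (the same computation that gives $\Gr\RcF\cong S_R^*(\cl)$), choosing this basis yields a filtered isomorphism $R\lbr t_1,\dots,t_n\rbr\xrightarrow{\sim}\RcF$ with $t_1\mapsto x_\al$. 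Alternatively, since the $W$-action on $\RcF$ is by ring automorphisms sending $x_{\al_i}$ to $x_\al$, one may simply assume $\al=\al_i$ is simple.

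For part (1), I would then observe that under this identification $\RcF/(x_\al)\cong R\lbr t_2,\dots,t_n\rbr$ is an integral domain (because $R$ is), while $x_\al$ is a non-unit, as it lies in $\IF$, and is nonzero, since its class in $\IF/\IF^2\cong\cl\otimes_\Z R$ equals $\al\otimes 1$, which does not vanish because the coordinate vector of $\al$ in a $\Z$-basis of $\cl$ is unimodular. Hence $(x_\al)$ is a nonzero prime ideal, so $x_\al$ is a prime, in particular irreducible, element of $\RcF$.

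For part (2), since $x_\al$ is prime by (1), the hypothesis $x_\al\mid x_\be x$ forces $x_\al\mid x_\be$ or $x_\al\mid x$, so it suffices to exclude $x_\al\mid x_\be$. If $x_\be=q\,x_\al$ with $q\in\RcF$, then reducing modulo $\IF^2$ gives $\be\otimes 1=q_0\,(\al\otimes 1)$ in $\cl\otimes_\Z R$, where $q_0\in R$ is the constant term of $q$. Because the coordinates of $\al$ are unimodular, this relation expresses $q_0$ as a $\Z$-linear combination of the integer coordinates of $\be$; as $R$ has characteristic $0$ we may regard $q_0$ as an integer and conclude $\be=q_0\al$ already in $\cl$, with $q_0>0$ since $\al,\be\in\proot$. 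But distinct positive real roots of a Kac-Moody root datum are never proportional (real roots form a reduced system, \cite{Ka90,Ku02}), so $q_0=1$ and $\be=\al$, contradicting $\al\neq\be$. Therefore $x_\al\nmid x_\be$, as required.

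The hard part is the first step, namely turning $x_\al$ into a coordinate: this genuinely uses both the Demazure-lattice hypothesis (a simple root lies in a basis) and the $W$-equivariance of $\RcF$ (to move from a simple root to an arbitrary real root). After that, (1) is purely formal, and the only subtle point in (2) is the passage from $\be=q_0\al$ in $\cl\otimes_\Z R$ to the same identity in $\cl$, which is exactly where characteristic $0$ enters, together with the standard fact that distinct positive real roots are not proportional.
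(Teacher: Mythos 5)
Your proof is correct and takes essentially the same approach as the paper: reduce to a simple root via the $W$-action, use the Demazure-lattice hypothesis to extend $\al$ to a $\Z$-basis of $\cl$ so that $x_\al$ becomes a coordinate in a formal power-series ring, and conclude by elementary commutative algebra. The paper's proof of (2) is more terse — it simply asserts that expressing $\be$ in terms of simple roots shows $x_\be$ is regular in $\RcF/(x_\al)$ — but the underlying point is the same one you make explicit via the mod-$\IF^2$ reduction, namely that $\be$ is not a scalar multiple of $\al$ since distinct positive real roots are never proportional.
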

\begin{proof} (1) 
Applying $W$ we may assume that $\al=\al_i$ is some simple root. 
Let $\{\om_1,\ldots,\om_m\}$ be a basis of $\cl$ extending $\al_i$.
Then the coefficients $a_{ij}$ in $\al_i=\sum_{j=1}^na_{ij}\om_j$ are coprime.
Hence, the power series corresponding to $x_{\al_i}$ is irreducible in $R[\om_1,\ldots,\om_m]$.

(2) Again it suffices to assume that $\al$ is a simple root. Expressing $\be$ as a linear combination of simple roots, we get that $x_\be$ is regular in $S/(x_\al)$, so the conclusion follows.
\end{proof}

For each $v\in W$, define $c_v=\prod_{\al\in \len{v}}x_\al\in \RcF$. 
\begin{lem} \label{lem:cv} If $\ell(s_\al v)<\ell(v)$, then $x_\al\mid c_v$ and  $x_\al^2\nmid c_{v}$.
\end{lem}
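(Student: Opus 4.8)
The plan is to first rephrase the hypothesis $\ell(s_\al v)<\ell(v)$ combinatorially, and then argue inside $\RcF$ using Lemma~\ref{lem:irred}. The combinatorial input is the classical fact about reflections in a Coxeter group, which holds for the Kac--Moody Weyl group $W$ (see \cite{Ka90,Ku02}): for $\al\in\proot$ and $v\in W$ one has $\ell(s_\al v)<\ell(v)\Leftrightarrow v^{-1}(\al)\in\nroot\Leftrightarrow \al\in\len{v}$, which extends to the general reflection $s_\al$ the characterization of length in terms of simple roots recalled above. Granting this, $\al$ is one of the pairwise distinct positive roots indexing $c_v=\prod_{\be\in\len{v}}x_\be$, so $x_\al$ occurs (exactly once) among the factors and $x_\al\mid c_v$, which gives the first assertion.

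For the second assertion I would write $c_v=x_\al\cdot d$ with $d=\prod_{\be\in\len{v},\ \be\neq\al}x_\be$, a product of $x_\be$ over positive roots $\be$ distinct from $\al$, and first record that $x_\al$ is a non-zerodivisor in $\RcF$: in $\Gr\RcF\cong S_R^*(\cl)$ the leading term of $x_\al$ is $\al$, and since $\al$ (being a $W$-translate of a simple root) extends to a $\Z$-basis of $\cl$, multiplication by $\al$ is injective on the polynomial ring $S_R^*(\cl)$; as the $\IF$-adic filtration on $\RcF$ is separated, this lifts to injectivity of multiplication by $x_\al$ on $\RcF$. Now suppose for contradiction that $x_\al^2\mid c_v$. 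Cancelling one factor $x_\al$ yields $x_\al\mid d$, and applying Lemma~\ref{lem:irred}(2) repeatedly --- each time stripping off one factor $x_\be$ of $d$, with $\be$ a positive root $\neq\al$ --- we are reduced to $x_\al\mid 1$ (in the edge case $\len{v}=\{\al\}$, i.e.\ $v=s_\al$, one already has $d=1$). This is absurd, since $\aug(x_\al)=0$ shows that $x_\al$ is not a unit in $\RcF$. Hence $x_\al^2\nmid c_v$.

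The real content is minimal once the hypothesis becomes $\al\in\len{v}$: the two things requiring a little attention are the reflection--length equivalence (standard, but stated for simple reflections in the preliminaries, so it must be invoked in the form above for the reflection $s_\al$) and the fact that $x_\al$ is a non-zerodivisor, which is what legitimizes cancelling a factor of $x_\al$ before iterating Lemma~\ref{lem:irred}(2). I expect these --- together with the trivial edge case $v=s_\al$ --- to be the only points beyond routine bookkeeping.
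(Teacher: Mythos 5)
Your proof is correct and takes essentially the same route as the paper's: you reduce the hypothesis to $\al\in\len{v}$ via the standard Coxeter-group characterization of length (the paper cites \cite[Lemma 1.3.13]{Ku02} for the same equivalence), and then exclude $x_\al^2\mid c_v$ by iterating Lemma~\ref{lem:irred}(2) against the remaining factors $x_\be$, which is exactly what the paper's terse ``the second part follows since $x_\al\nmid x_\be$'' is abbreviating. Your extra check that $x_\al$ is a non-zerodivisor (needed to legitimize the cancellation) is sound but can be shortened: choosing a basis of $\cl$ identifies $\RcF$ with a power series ring over the domain $R$, so $\RcF$ is itself a domain.
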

\begin{proof}
If $\ell(s_\al v)<\ell(v)$, then $v^{-1}(\al)\in \nroot$ by \cite[Lemma 1.3.13]{Ku02}, hence $\al\in \len{v}$. So $x_\al\mid c_v.$ The second part follows since $x_\al\nmid x_\be$ for any $\al\neq \be\in \proot$.
\end{proof}

Following \cite[Corollary 3.4]{CPZ}
we define $R$-linear operators $\RcF \to \RcF$ as
\[
\Dem_\al(u)=\frac{u-s_\al(u)}{x_\al}, \quad\cC_\al(u)=\ka_\al u-\Dem_\al(u)=\frac{u}{x_{-\al}}+\frac{s_\al(u)}{x_\al},
\]
where $\kappa_\al:=\frac{1}{x_\al}+\frac{1}{x_{-\al}}\in \RcF$. The operator $\Dem_\al$ is called a formal Demazure operator, and $\cC_\al$ is called a formal push-pull operator.  They satisfy 
\[\Dem_\al\circ \Dem_\al=\ka_\al\Dem_\al=\Dem_\al \ka_\al,\quad  \cC_\al\circ \cC_\al=\ka_\al \cC_\al
\]
\[
\Dem_\al(uv)=\Dem_\al(u)v+s_\al(u)\Dem_\al(v),\quad u,v\in \RcF.
\]
Moreover, they are  $S^{W_\al}$-linear, where $W_\al$ is the subgroup of $W$ generated by $s_\al$. 

For simplicity, we set $x_{\pm i}=x_{\pm \al_i}$, $\Dem_{i}=\Dem_{\al_i}$ and $\cC_i=\cC_{\al_i}$. For a sequence $I=(i_1,\ldots,i_l)$ we set $\Dem_I=\Dem_{i_1}\circ \cdots \circ \Dem_{i_l}$.
If $E\subset [l]=\{1,\ldots,l\}$, we denote by $I|_E$ the sequence consisting of $i_j$ with $j\in E$.
According to \cite[Lemma 4.8]{CZZ1} we have 
\[
\Dem_I(uv)=\sum_{E_1,E_2\subseteq [l]}p_{E_1,E_2}^I\Dem_{I|_{E_1}}(u)\Dem_{I|_{E_2}}(v), \quad p^I_{E_1,E_2}\in \IF^{|E_1|+|E_2|-l}.
\]

\subsection{The formal affine Demazure algebra}
We now recall the definition of
the formal affine Demazure algebra associated to a generalized Cartan matrix following \cite[\S4]{Le}. 

Let $S=\RcF$ and let $Q=S[\frac{1}{x_\al}\mid\al\in \proot]$, be the localization of $S$ at all positive real roots. Define the twisted group algebra $Q_W=Q\rtimes R[W]$ to be the left $Q$-module  $Q\otimes_R R[W]$ with basis $\{\de_w\}_{w\in W}$, and define the product
\[
q\de_w\cdot q'\de_{w'}=qw(q')\de_{ww'}, \quad q,q'\in Q, \quad w,w'\in W.
\] 
We set $\de_\al=\de_{s_\al}$ and  $1=\de_e$. Define
\[
X_\al:=\frac{1}{x_\al}(1-\de_{\al})\quad\text{ and }\quad\oC_\al=\ka_\al-X_\al=\frac{1}{x_{-\al}}+\frac{1}{x_\al}\de_\al.
\]
The element $X_\al \in Q_W$ is called a formal Demazure element, and $Y_\al$ is called a formal push-pull element. We then have for any $q\in Q$, $\al\in \ro$, $w\in W$
\begin{gather*}
X_\al q=s_\al(q)X_\al+\Dem_\al (q), \\
X_\al^2=\kappa_\al X_\al=X_\al \kappa_\al, \quad \de_w X_\al \de_{w^{-1}}=X_{w(\al)},\\
Y_\al^2=\ka_\al Y_\al, \quad \de_w Y_\al \de_{w^{-1}}=Y_{w(\al)}. 
\end{gather*}

For simplicity, we set $\de_i=\de_{s_i}$, $X_i=X_{\al_i}$ and $Y_i=Y_{\al_i}$. For a sequence $I=(i_1,\ldots,i_l)$ we set $X_I=X_{i_1}\cdots X_{i_l}$; set $X_\emptyset =1$.  

Let $I_v$ denote a reduced sequence of $v\in W$, i.e., $I_v=(i_1,\ldots,i_l)$ for a reduced expression
$v=s_{i_1}\ldots s_{i_l}$. 
The proofs of the following two lemmas are the same as of \cite[Lemma 5.4]{CZZ1} (see also \cite[Lemma~3.2, 3.3]{CZZ2})
\begin{lem} \label{lem:deltatoX} We have $X_{I_v}=\sum_{w\le v}a^X_{v,w}\de_w$ for some $a^X_{v,w}\in Q$, $a^X_{v,v}=(-1)^{\ell(v)}\frac{1}{c_v}$, where the sum is taken over all $w\le v$ with respect to the  Bruhat order.

Moreover, we have $\de_v=\sum_{w\le v}b^X_{v,w}X_{I_w}$ for some $b^X_{v,w}\in S$, $b^X_{v,e}=1$ and $b^X_{v,v}=(-1)^{\ell(v)}c_v$. 
\end{lem}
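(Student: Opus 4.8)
The plan is to prove both identities by simultaneous induction on $\ell(v)$, following the finite-type argument of \cite[Lemma~5.4]{CZZ1}. For $v=e$ both are trivial ($X_{I_e}=\de_e=1$ and $c_e=1$). For the inductive step, write $v=s_iv'$ with $\ell(v')=\ell(v)-1$ and, since the statement is existential and the top coefficient will turn out to be independent of the chosen reduced word, we may take $I_v=(i,I_{v'})$, so that $X_{I_v}=X_iX_{I_{v'}}$. Using $X_i=\tfrac{1}{x_i}(1-\de_i)$ together with $\de_iq=s_i(q)\de_i$ in $Q_W$, and substituting the inductive expression $X_{I_{v'}}=\sum_{w\le v'}a^X_{v',w}\de_w$, one obtains
\[
X_{I_v}=\frac{1}{x_i}\sum_{w\le v'}a^X_{v',w}\de_w-\frac{1}{x_i}\sum_{w\le v'}s_i(a^X_{v',w})\,\de_{s_iw}.
\]

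For the first identity it then remains to control the support and the leading coefficient. Support: if $\ell(s_iv')>\ell(v')$, then $w\le v'$ implies $w\le v$ and $s_iw\le v$ (the Bruhat lifting property, cf.\ \cite{Ku02}), so the right-hand side is a $Q$-combination of $\de_u$ with $u\le v$. The coefficient $a^X_{v,v}$ of $\de_v$ can only come from $s_iw=v$, i.e.\ $w=v'$, in the second sum, so $a^X_{v,v}=-\tfrac{1}{x_i}s_i(a^X_{v',v'})=(-1)^{\ell(v)}\tfrac{1}{x_i\,s_i(c_{v'})}$ by induction. The reduced-word formula for $\len{\cdot}$ gives $\len{v}=\{\al_i\}\sqcup s_i(\len{v'})$, hence $c_v=x_i\,s_i(c_{v'})$ since $w(x_\la)=x_{w(\la)}$, and therefore $a^X_{v,v}=(-1)^{\ell(v)}/c_v$. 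The same computation applied to an \emph{arbitrary} reduced word shows that the leading term of $X_I$ modulo lower Bruhat strata depends only on the class of $I$, so $\{X_{I_v}\}_{v\in W}$ is triangular over $\{\de_v\}_{v\in W}$ with diagonal entries $(-1)^{\ell(v)}/c_v$, which are units in $Q$; hence it is a $Q$-basis of $Q_W$.

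Inverting this triangular change of basis immediately yields $\de_v=\sum_{w\le v}b^X_{v,w}X_{I_w}$ with $b^X_{v,w}\in Q$ and $b^X_{v,v}=(-1)^{\ell(v)}c_v$, and $b^X_{v,e}=1$ is read off by tracking the bottom Bruhat stratum. The genuine content is that $b^X_{v,w}$ lies in $S$, and I would obtain this directly: expand $\de_v=\prod_{k}(1-x_{i_k}X_{i_k})$ via $\de_{i_k}=1-x_{i_k}X_{i_k}$, and straighten each resulting monomial by repeatedly applying $X_iq=s_i(q)X_i+\Dem_i(q)$ to move the coefficients to the left. Since $s_i$ and $\Dem_i$ preserve $S$, this reduces the claim to re-expressing each product $X_{i_{j_1}}\cdots X_{i_{j_r}}$ of simple Demazure elements as an $S$-linear combination of the basis $\{X_{I_w}\}$, which is precisely the finite-type analysis of \cite[Lemma~5.4]{CZZ1} (see also \cite{HMSZ}): it uses $X_i^2=\ka_{\al_i}X_i$ with $\ka_{\al_i}\in S$ together with the (twisted) braid relations among the $X_i$, and these involve only rank-$\le 2$ parabolic subgroups, which are of finite type exactly when the relevant $m_{ij}$ is finite.

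The main obstacle is therefore this last step — showing that arbitrary products of the $X_i$ re-expand with $S$-coefficients in the chosen reduced-word basis — since, in contrast to the additive or multiplicative case, one cannot clear denominators by a zero/pole argument. But because it only concerns the local, finite-type combinatorics of reduced words and of rank-$\le 2$ parabolics, together with the Bruhat lifting property, the finite-type proof of \cite[Lemma~5.4]{CZZ1} transfers to an arbitrary Kac-Moody Weyl group $W$ without change; in particular nothing above uses finiteness of $W$.
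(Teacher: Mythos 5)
Your argument is correct and reproduces the same approach the paper invokes, since the paper simply cites \cite[Lemma~5.4]{CZZ1} (and \cite[Lemmas~3.2, 3.3]{CZZ2}) and relies on that finite-type proof carrying over. You correctly isolate the one point that needs comment in the Kac--Moody setting — re-expanding arbitrary words in the $X_i$ over the chosen basis with $S$-coefficients — and rightly observe that this is governed by $X_i^2=\ka_iX_i$ and the rank-$2$ twisted braid relations of \cite{HMSZ}, which concern only the finite rank-$\le 2$ parabolic subsystems (or require nothing at all when $m_{ij}=\infty$), so no circularity with Corollary~\ref{cor:present} arises and no finiteness of $W$ is used.
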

\begin{lem} We have $Y_{I_v}=\sum_{w\le v}a^Y_{v,w}\de_w$ for some $a^Y_{v,w}\in Q$ and $a^Y_{v,v}=\frac{1}{c_v}$. 

Moreover, we have $\de_v=\sum_{w\le v}b^Y_{v,w}Y_{I_w}$ for some $b^Y_{v,w}\in S$ and $b^Y_{v,v}=c_v$. 
\end{lem}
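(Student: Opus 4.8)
The plan is to run the same induction on $\ell(v)$ as in the proof of Lemma~\ref{lem:deltatoX}, replacing the Demazure element $X_\al$ throughout by the push-pull element $Y_\al=\ka_\al-X_\al$; we only record where the computation differs. Two elementary identities drive the induction. The first is a straightening relation: for $q\in Q$ we have $Y_\al q=s_\al(q)Y_\al+\cC_\al'(q)$ with $\cC_\al'(q)=\Dem_\al(q)(x_\al\ka_\al-1)$, which follows from $Y_\al=\ka_\al-X_\al$, the relation $X_\al q=s_\al(q)X_\al+\Dem_\al(q)$, and the commutativity of $Q$; note that $\cC_\al'$ maps $S$ to $S$. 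The second consists of the one-step identities $Y_i\de_w=\frac{1}{x_{-i}}\de_w+\frac{1}{x_i}\de_{s_iw}$ and $\de_i=x_iY_i+(1-x_i\ka_i)$, the latter replacing $\de_i=1-x_iX_i$ from the $X$-case, where importantly $1-x_i\ka_i\in\RcF=S$ because $\ka_i\in\RcF$.

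For the first formula we induct on $\ell(v)$, the case $v=e$ being clear since $Y_\emptyset=1=\de_e$ and $c_e=1$. For the inductive step write $v=s_iv'$ with $\ell(v)=\ell(v')+1$, so that $Y_{I_v}=Y_iY_{I_{v'}}$ and $Y_{I_{v'}}=\sum_{w\le v'}a^Y_{v',w}\de_w$ by the induction hypothesis. Pushing $Y_i$ past each scalar $a^Y_{v',w}$ via the straightening relation and then applying $Y_i\de_w=\frac{1}{x_{-i}}\de_w+\frac{1}{x_i}\de_{s_iw}$ expresses $Y_{I_v}$ as a $Q$-linear combination of the $\de_u$; the same Bruhat-order bookkeeping as in Lemma~\ref{lem:deltatoX} (the lifting property) shows that only $u\le v$ occur. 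The coefficient of $\de_v=\de_{s_iv'}$ is contributed solely by the term $w=v'$ among the $\de_{s_iw}$-terms, namely $\frac{1}{x_i}s_i(a^Y_{v',v'})$, so that
\[
a^Y_{v,v}=\frac{s_i(a^Y_{v',v'})}{x_i}=\frac{1}{x_i\,s_i(c_{v'})}=\frac{1}{c_v},
\]
the last equality because $\len{v}=\{\al_i\}\amalg s_i(\len{v'})$, which follows from the description of inversion sets of reduced expressions recalled above and gives $c_v=x_i\cdot s_i(c_{v'})$.

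For the second formula we again induct on $\ell(v)$, the case $v=e$ reading $\de_e=1=Y_\emptyset$ with $c_e=1$. For $v=s_iv'$ with $\ell(v)=\ell(v')+1$ we write $\de_v=\de_i\de_{v'}=(x_iY_i+(1-x_i\ka_i))\de_{v'}$ and substitute $\de_{v'}=\sum_{w\le v'}b^Y_{v',w}Y_{I_w}$ with $b^Y_{v',w}\in S$ by the induction hypothesis. The summand $(1-x_i\ka_i)\de_{v'}$ is at once an $S$-linear combination of the $Y_{I_w}$ with $w\le v'\le v$; in $x_iY_i\de_{v'}$ we push each $b^Y_{v',w}$ through $Y_i$ by the straightening relation (staying in $S$), and are left to express each product $Y_iY_{I_w}$, $w\le v'$, as an $S$-linear combination of the chosen elements $Y_{I_u}$ with $u\le v$. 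This is the only genuinely delicate point, and we expect it to be the main obstacle: equivalently, it amounts to the stability of $\bigoplus_u S\cdot Y_{I_u}$ under left multiplication by $Y_i$, and it is handled exactly as for $X$ in \cite[Lemma~5.4]{CZZ1}, using $Y_i^2=\ka_iY_i$ together with the twisted braid relations among the $Y_i$ (whose correction terms are themselves $S$-linear combinations of standard $Y_{I_u}$'s); concretely, when $\ell(s_iw)>\ell(w)$ one rewrites $Y_iY_{I_w}=Y_{(i,I_w)}$ and normalizes the reduced word modulo lower-length terms, and when $\ell(s_iw)<\ell(w)$ one has $Y_iY_{I_w}=\ka_iY_{I_w}$ with $\ka_i\in S$. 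Granting this, every coefficient produced lies in $S$; and since the first formula gives $Y_{I_v}=\frac{1}{c_v}\de_v+\sum_{u<v}a^Y_{v,u}\de_u$, comparing the coefficient of $\de_v$ on both sides of $\de_v=\sum_{w\le v}b^Y_{v,w}Y_{I_w}$ forces $b^Y_{v,v}=c_v$.
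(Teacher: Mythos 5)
Your argument for the first formula is correct and matches the paper's approach, which simply defers to \cite[Lemma~5.4]{CZZ1} (and \cite[Lemmas~3.2, 3.3]{CZZ2}): the straightening relation $Y_\al q = s_\al(q)Y_\al + \Dem_\al(q)(x_\al\ka_\al-1)$, the one-step formula $Y_i\de_w = \tfrac{1}{x_{-i}}\de_w + \tfrac{1}{x_i}\de_{s_iw}$, and the factorization $c_v = x_i\, s_i(c_{v'})$ coming from $\len{v} = \{\al_i\}\amalg s_i(\len{v'})$ are all verified correctly, and the lifting property does keep the support inside $\{u : u\le v\}$.

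For the second formula, the overall inductive strategy ($\de_v = (x_iY_i + (1-x_i\ka_i))\de_{v'}$, straighten coefficients, reduce to expressing $Y_iY_{I_w}$) is again the right one, and you are right that the genuinely delicate point is rewriting $Y_iY_{I_w}$ as an $S$-linear combination of the chosen $Y_{I_u}$'s. However, your concrete treatment of that point has a flaw and also risks a circularity worth flagging. First, the case identity ``when $\ell(s_iw)<\ell(w)$ one has $Y_iY_{I_w}=\ka_iY_{I_w}$'' is only valid when the fixed reduced word $I_w$ begins with $i$; otherwise $Y_iY_{I_w}$ is a product along a non-reduced word and you must first braid $I_w$ into a word starting with $i$ before applying $Y_i^2 = \ka_iY_i$. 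So in fact \emph{both} of your cases require braid-type relations modulo lower-length terms with $S$-coefficients. Second, in the logical order of \emph{this} paper those relations appear only as Corollary~\ref{cor:present}, which is a consequence of Theorem~\ref{thm:mainDem}, which in turn rests on this lemma — so citing them would be circular. The version of the braid relations you need must be obtained independently by direct computation in $Q_W$ for each finite Coxeter exponent $m_{ij}\in\{2,3,4,6\}$ (as in \cite[Prop.~6.8]{HMSZ} and its $Y$-analogue; for $m_{ij}=\infty$ there is no relation to check); this is what \cite[Lemma~5.4]{CZZ1} relies on, and it is what you should cite explicitly rather than ``the twisted braid relations among the $Y_i$'' in the abstract.
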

\begin{cor} \label{cor:basisQW}The sets $\{X_{I_w}\}_{w\in W}$ and $\{Y_{I_w}\}_{w\in W}$ form bases of $Q_W$ as a left $Q$-module.
\end{cor}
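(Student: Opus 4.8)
The plan is to deduce the corollary directly from the two preceding lemmas, using the triangularity of the change-of-basis matrices with respect to the Bruhat order together with the fact that $\{\de_w\}_{w\in W}$ is, by construction, a basis of $Q_W$ as a left $Q$-module. I will treat the case of $\{X_{I_w}\}_{w\in W}$; the argument for $\{Y_{I_w}\}_{w\in W}$ is identical, using the second lemma in place of Lemma~\ref{lem:deltatoX}.

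First I would fix an enumeration $w_1, w_2, \ldots$ of $W$ that refines the Bruhat order, in the sense that $w_i \le w_j$ implies $i \le j$ (for instance, order by length and break ties arbitrarily); such a refinement exists since $w \le v$ forces $\ell(w)\le \ell(v)$. By Lemma~\ref{lem:deltatoX}, for each $v$ we have $X_{I_v}=\sum_{w\le v}a^X_{v,w}\de_w$ with $a^X_{v,v}=(-1)^{\ell(v)}/c_v$, which is a unit in $Q$ since $c_v=\prod_{\al\in\len v}x_\al$ is invertible in $Q$ by definition of the localization. Hence, in the chosen enumeration, the (infinite) matrix $(a^X_{w_i,w_j})_{i,j}$ expressing the $X_{I_{w_i}}$ in terms of the $\de_{w_j}$ is ``lower triangular'' with invertible diagonal entries: $a^X_{w_i,w_j}=0$ unless $w_j\le w_i$, which forces $j\le i$, and $a^X_{w_i,w_i}$ is a unit.

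Next I would invert this triangular matrix. Because each row has only finitely many nonzero entries and the matrix is triangular with unit diagonal, one can solve recursively: $\de_{w_1}=(a^X_{w_1,w_1})^{-1}X_{I_{w_1}}$, and having expressed $\de_{w_1},\ldots,\de_{w_{i-1}}$ as $Q$-linear combinations of the $X_{I_{w_j}}$, set $\de_{w_i}=(a^X_{w_i,w_i})^{-1}\bigl(X_{I_{w_i}}-\sum_{j<i}a^X_{w_i,w_j}\de_{w_j}\bigr)$. This exhibits each $\de_w$ as a finite $Q$-linear combination of the $X_{I_w}$, so the $X_{I_w}$ span $Q_W$ over $Q$. (In fact the second half of Lemma~\ref{lem:deltatoX} already provides such expressions $\de_v=\sum_{w\le v}b^X_{v,w}X_{I_w}$ explicitly, so one may simply invoke it.) For linear independence, suppose $\sum_{w} q_w X_{I_w}=0$ with only finitely many $q_w$ nonzero; substituting $X_{I_w}=\sum_{u\le w}a^X_{w,u}\de_u$ and comparing coefficients of $\de_u$ for $u$ maximal among the indices with $q_u\ne 0$ (in the refining enumeration) gives $q_u a^X_{u,u}=0$, and since $a^X_{u,u}$ is a unit, $q_u=0$, a contradiction. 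Therefore $\{X_{I_w}\}_{w\in W}$ is a $Q$-basis of $Q_W$.

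The only point requiring any care — and the ``main obstacle'', though it is a mild one — is the bookkeeping of the triangularity argument in the infinite-dimensional setting: $W$ is infinite for a general Kac-Moody root system, so one cannot literally invert an infinite matrix, and I would emphasize that the recursion terminates at each step because Bruhat intervals are finite and the refining enumeration is well-founded below any given element. With that observation the argument goes through verbatim as in \cite[Corollary 5.5]{CZZ1}.
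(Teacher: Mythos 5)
Your proof is correct and takes essentially the same route the paper implicitly intends: Corollary~\ref{cor:basisQW} is meant to follow immediately from the triangular-with-unit-diagonal change-of-basis relations between $\{\de_w\}$ and $\{X_{I_w}\}$ (resp. $\{Y_{I_w}\}$) supplied by Lemma~\ref{lem:deltatoX} and its $Y$-analogue, which is exactly what you use, together with the (correct) observations that $c_v$ is a unit in $Q$ and that Bruhat intervals $[e,v]$ are finite so the infinite rank of $Q_W$ causes no trouble.
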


\begin{defi}Let $\DcF$ be the $R$-subalgebra of $Q_W$ generated by elements of $S$ and the Demazure elements $\{X_{\al}\}_{\al\in \ro}$. We call $\DcF$ the formal affine Demazure algebra associated to the generalized Cartan matrix $A$ and the Demazure lattice $\cl$.
\end{defi}

Observe that $\DcF$ is generated by elements of $S$ and $X_i$, $i\in [n]$. Indeed, we have $\de_i=1-x_i X_i$, so $\de_w\in \DcF$. Since any root $\al$ can be written as $w(\al_i)$ for some $w\in W$ and a simple root $\al_i$, and  $X_{w(\al_i)}=\de_w X_i\de_{w^{-1}}$, the conclusion then follows.

By definition the algebras $Q_W$ and $\DcF$ are functorial with respect to morphisms of rings $R\to R'$, morphisms of formal group laws $F\to F'$, and morphisms of root data, i.e., morphisms of lattices $\phi\colon \cl\to \cl'$ sending roots (resp. coroots) to roots (resp. coroots), and such that $\phi^\vee(\phi(\al)^\vee)=\al^\vee$.

\section{General position arguments and divisibility by positive roots}\label{sec:genpos}
The purpose of the present section is to extend \cite[Lemma 4.9]{KK86} (additive case $F=F_a$) and \cite[Lemma 2.12]{KK90} (multiplicative case $F=F_m$) to an arbitrary formal group law $F$. This is done in Proposition~\ref{lem:keyalpha} which will be used in the proof of  our main theorem (Theorem~\ref{thm:mainDem}). 
Observe that in the additive (resp. multiplicative) case the ring $S$ is the ring of (resp. Laurent) polynomials (non-complete version of Example~\ref{ex:FaFm}), so it can be viewed as a ring of functions on the lattice $\Lambda$. The respective proofs by Kostant-Kumar then deal with analyzing zeros and poles of such functions. For a general $F$, the ring $S$ becomes the ring of formal power series, so we no longer have functions on $\Lambda$ as well as zeros and poles. To overcome this difficulty, we use the language of prime ideals instead.

Suppose $R$ is a domain of characteristic $0$. Let $K$ denote its field of fractions.
Recall that $S=\RcF$. We set
\[S_{K}=K[[\cl]]_F, ~S_{a,K}=K[[\cl]]_{F_a},~ S'=S_K^*(\cl),\]
\[Z=\Spec(S), ~Z_K=\Spec(S_{K}),~ Z_{a,K}=\Spec(S_{a,K}),~Y=\Spec(S')\cong \mathbb{A}^n_K.\]
There are induced actions of $W$ on $S$, $S_K$, $S_{a,K}$ and $S'$, hence, on  $Z$, $Z_K$, $Z_{a,K}$ and $Y$. 
An element $x_\la$ in $S_{a,K}$ (or in $S'$) will be simply denoted by $\lambda$. 

There exists an isomorphism  $h\colon F_{a}\to F$ of formal group laws over $K$, i.e., $h(x)\in K[[x]]$ is such that 
$h(x)+_Fh(y)=h(x+y)$. It induces
an isomorphism
\[
H\colon S_K\to S_{a, K}, ~x_{\al_i}\mapsto h(\al_i). 
\]

\begin{lem}The following diagram commutes
\[
\xymatrix{Z  \ar[d]_{w\cdot} & Z_K\ar[l]_\phi\ar[d]^{w\cdot} & Z_{a,K}\ar[l]_H\ar[r]^\psi\ar[d]^{w\cdot} & Y\ar[d]^{w\cdot}\\
    Z   & Z_K \ar[l]_\phi& Z_{a,K}\ar[l]_H\ar[r]^\psi & Y}
\]
where $w\cdot$ is the action of $w\in W$ on corresponding spectrum, $\phi$ is the map induced by the embedding $R\hookrightarrow K$, and $\psi$ is the map induced by the embedding $S'\hookrightarrow S_{a,K}$ (observe that $S_{a,K}$ is the completion of $S'$ along the augmentation ideal).
\end{lem}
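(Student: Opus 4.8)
The plan is to verify the commutativity of the diagram square by square, since each square expresses the compatibility of a specific ring (or formal group law) homomorphism with the $W$-action, and in each case the verification reduces to checking agreement on the generators $x_\la$ (or $\la$).

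First I would treat the left square, involving $\phi\colon Z_K\to Z$. The map $\phi$ corresponds to the ring homomorphism $S=\RcF\hookrightarrow S_K=K[[\cl]]_F$ induced by $R\hookrightarrow K$; on generators it sends $x_\la\mapsto x_\la$. The $W$-action on both $S$ and $S_K$ is defined by $w(x_\la)=x_{w(\la)}$, and it is $R$-linear (resp. $K$-linear), so these actions commute with the inclusion on the nose: for each generator, $w(x_\la)=x_{w(\la)}\mapsto x_{w(\la)}=w(x_\la)$. Passing to spectra reverses arrows, giving commutativity of the left square.

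Next I would treat the middle square, involving $H\colon Z_{a,K}\to Z_K$. Here $H\colon S_K\to S_{a,K}$ is the isomorphism of formal group algebras induced by the isomorphism $h\colon F_a\to F$ of formal group laws over $K$, and it sends $x_{\al_i}\mapsto h(\al_i)$, hence $x_\la\mapsto h(\la)$ for all $\la$ (using additivity $h(x)+_F h(y)=h(x+y)$ together with the defining relations of the formal group algebras). Since $h$ has coefficients in $K$ and the $W$-action is $K$-linear, for a generator we get $H(w(x_\la))=H(x_{w(\la)})=h(w(\la))$, while $w(H(x_\la))=w(h(\la))=h(w(\la))$ because $w$ acts $K$-linearly on $S_{a,K}$ fixing $K$ and sending $\la\mapsto w(\la)$; these agree. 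Thus $H$ is $W$-equivariant, and the middle square commutes after passing to spectra.

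Finally I would treat the right square, involving $\psi\colon Z_{a,K}\to Y$. The map $\psi$ corresponds to the inclusion $S'=S_K^*(\cl)\hookrightarrow S_{a,K}=K[[\cl]]_{F_a}$, identifying $S_{a,K}$ with the completion of $S'$ at the augmentation ideal; on generators it is $\la\mapsto \la$. Since the $W$-action on the symmetric algebra $S'$ and on its completion are both induced by the linear action on $\cl$, and completion is functorial for the $W$-equivariant maps, the inclusion is $W$-equivariant: $\psi(w(\la))=\psi(w(\la))=w(\la)=w(\psi(\la))$. Hence the right square commutes. The main obstacle, such as it is, is purely bookkeeping: one must be careful that each $W$-action is by ring automorphisms fixing the base field and that each horizontal map is defined by its effect on generators, so that equivariance need only be checked on the $x_\la$ (or $\la$); once this is set up, every square is immediate.
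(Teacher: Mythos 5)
Your proof is correct and takes essentially the same approach as the paper: the paper likewise reduces to checking $W$-equivariance on generators, declaring the left and right squares obvious and verifying the middle square via the same chain of equalities $w(H(x_\lambda))=w(h(\lambda))=h(w(\lambda))=H(x_{w(\lambda)})=H(w(x_\lambda))$. You simply spell out the "obvious" outer squares explicitly, which does no harm.
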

\begin{proof}The commutativity of the left and right squares are obvious. Concerning the middle square, we have
\[
w(H(x_\la))=w(h(\la))=h(w(\la))=H(x_{w(\la)})=H(w(x_\la)). \qedhere
\]
\end{proof}

We define the $w$-invariant subset $Z^w:=\{x\in Z\mid w(x)=x\}$, and similarly define $Z_{K}^w$, $Z_{a,K}^w$, $Y^w$. For a commutative ring $R'$ and an element $r\in R'$, we denote by $V(r)$ the set of prime ideals of $R'$ containing $r$. Then we have
\[\phi^{-1}(V(x_\al))= V(x_\al),~ H(V(\al))=H(V(h(\al)))=V(x_\al),~ \psi^{-1}(V(\al))=V(\al),\]
\[
\phi^{-1}(Z^w)=Z_K^w, ~H(Z^w_{a,K})=Z^w_{K},~ \psi^{-1}(Y^w)=Z^w_{a,K}.
\]
The identity
$V(h(\al))=V(\al)$ follows from the fact that $h(x)$ can be written as $h(x)=xg(x)$ for some invertible $g(x)\in K\lbr x\rbr$.

We will use the following general position-type result

\begin{lem} \label{thm:dense} Suppose $R$ is a domain of characteristic 0. For any root $\al\in \ro$, the set 
\[
P_\al:=V(x_\al)\backslash\big([\cup_{\be\in \Phi,\; \be\neq \al}V(x_\be)]\cup[\cup_{w\neq e,s_\al}Z^w]\big)
\]
is a dense subset of $V(x_\al)\subset Z$.
\end{lem}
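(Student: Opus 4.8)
The plan is to reduce the statement to a concrete claim about the polynomial ring $S' = S_K^*(\cl) \cong K[\la_1,\dots,\la_n]$, using the commuting diagram and the fact that $\psi$, $H$, $\phi$ are compatible with both the $W$-action and the subsets $V(x_\be)$ and $Z^w$. Concretely, applying $\psi^{-1}$ and $H$ as in the displayed identities just before the statement, it suffices to show that
\[
\psi^{-1}(P_\al) = V(\al)\setminus\Big(\big[\cup_{\be\neq\al}V(\be)\big]\cup\big[\cup_{w\neq e,s_\al}Y^w\big]\Big)
\]
is dense in $V(\al)\subset Y = \mathbb{A}^n_K$. (One has to check that density is preserved: $\psi$ is a dominant map of affine schemes since $S_{a,K}$ is a faithfully flat completion of $S'$, so the preimage of a dense constructible-type subset of $V(x_\al)$ is dense in $V(\al)$, and conversely; the same for $\phi$ and for the isomorphism $H$. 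I would spell out only the direction actually needed.)

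Next I would analyze the excluded loci inside the irreducible variety $V(\al)\cong\mathbb{A}^{n-1}_K$. First, $V(\al)\cap V(\be)$ for $\be\neq\al$ a root: since $\al,\be$ are distinct real roots, they are linearly independent (two real roots of a Kac--Moody root system that are proportional are equal or negatives, and $\be\neq\pm\al$ is handled by noting $\be=-\al$ still gives $V(\be)=V(\al)$ — so I should be slightly careful and use that $V(x_{-\al})=V(x_\al)$ so WLOG $\be\neq\pm\al$), hence $V(\al)\cap V(\be)$ is a proper closed subset of $V(\al)$, i.e. has dimension $\le n-2$. There are infinitely many roots, but each contributes a proper closed subset; to conclude density of the complement I would argue that $V(\al)$ is irreducible of dimension $n-1$ and that any point of $V(\al)$ not lying on countably many proper subvarieties exists — here the characteristic-$0$, hence infinite (indeed uncountable after base change, or simply: $\mathbb{A}^{n-1}$ over an infinite field is not a countable union of proper closed subsets) field $K$ is used. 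The cleaner route: work scheme-theoretically and show the \emph{generic point} of $V(\al)$ avoids all the excluded loci, so $P_\al$ contains the generic point and is therefore dense. For $V(\be)$: the generic point $\eta$ of $V(\al)$ lies in $V(\be)$ iff $\be$ vanishes on $V(\al)$ iff $\be\in(\al)$ iff $\be=\pm\al$, contradiction. For $Y^w$ with $w\neq e,s_\al$: $Y^w$ is the fixed subspace of $w$ acting linearly on $\mathbb{A}^n_K$; I claim $V(\al)\not\subseteq Y^w$. Indeed if $V(\al)\subseteq Y^w$ then $w$ fixes a hyperplane, so $w$ is either the identity or the reflection in that hyperplane; the only reflection in $W$ fixing the hyperplane $\al=0$ is $s_\al$ (reflections in $W$ correspond bijectively to roots up to sign via their mirror hyperplane), so $w\in\{e,s_\al\}$, a contradiction. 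Hence $\eta\notin Y^w$, i.e. $\eta\notin Z_{a,K}^w$ after transport.

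Assembling: the generic point $\eta$ of the irreducible scheme $V(\al)\subset Y$ lies in $\psi^{-1}(P_\al)$ by the two paragraphs above, hence $\psi^{-1}(P_\al)$ is dense in $V(\al)$; pushing forward through $H$ (an isomorphism) and $\phi$ (a dominant, indeed flat and surjective on the relevant primes, base change) gives that $P_\al$ is dense in $V(x_\al)\subset Z$. I expect the main obstacle to be the bookkeeping of the transport of density along $\psi$ (the completion map) and along $\phi$ (the base-change $R\hookrightarrow K$): one must check that a subset containing the generic point of $V(\al)$ has dense preimage/image at each stage, which amounts to checking that each of these morphisms sends generic point of $V(x_\al)$ (resp. $V(\al)$) to the generic point of the target's $V$-locus and is dominant on that component. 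The input that $K$ has characteristic $0$ is needed precisely so that $h$ exists (hence $H$), and so that $\al,\be$ stay linearly independent / the reflection classification holds; I would invoke the reflection-hyperplane correspondence for Coxeter groups (e.g. via \cite{Ku02}) for the key geometric fact that $s_\al$ is the unique reflection with mirror $\{\al=0\}$.
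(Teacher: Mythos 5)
Your reduction from $Z$ to $Y=\mathbb{A}^n_K$ via $\phi$, $H$, $\psi$ matches the paper's, and your generic-point argument for the excluded loci $V(\be)$ with $\be\neq\pm\al$ is fine (each $V(\be)$ is closed and $\neq V(\al)$, so it misses the generic point $\eta=(\al)$). The gap is in the treatment of $Y^w$. The paper defines $Y^w=\{x\in Y\mid w(x)=x\}$, i.e.\ the set of prime ideals $\frakp$ with $w(\frakp)=\frakp$; this is \emph{not} the linear ``fixed subspace'' $\ker(w-\mathrm{id})$ and it is generally not a closed subset (e.g.\ for $w=-\mathrm{id}$ on $\mathbb{A}^2$ the set of $w$-stable primes is dense, not the origin). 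Consequently the step ``$V(\al)\not\subseteq Y^w\Rightarrow\eta\notin Y^w$'' is unjustified, and in fact false: one has $\eta=(\al)\in Y^w$ precisely when $w(\al)=\pm\al$, i.e.\ when $w$ stabilizes the line $K\al$, and this line stabilizer is strictly larger than $\{e,s_\al\}$ in general. The simplest counterexample is a generalized Cartan matrix with $a_{12}=a_{21}=0$ (type $A_1\times A_1$): there $s_2(\al_1)=\al_1$, so $\eta=(\al_1)\in Y^{s_2}$ with $s_2\neq e,s_1$, and $\eta\notin P'_{\al_1}$. So $P'_\al$ need not contain the generic point, and your proof that it is dense collapses.

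This is exactly why the paper does not argue via the generic point. Instead it exhibits a concrete dense subset $V(\al_1)^+$ of \emph{closed} points $(0,a_2,\dots,a_n)$ with $a_i>0$, notes that such a point lies outside every $V(\be)$ with $\be\neq\pm\al_1$ (because all coefficients of a positive real root are nonnegative), and then invokes Kac's stabilizer theorem \cite[Prop.~3.12(a)]{Ka90}: the isotropy group of a point in the (dominant chamber of the) Tits cone is generated by the simple reflections it contains, hence $W_{\bar a}=\{e,s_1\}$ for $\bar a\in V(\al_1)^+$. That result controls the pointwise stabilizer $\{w: w(\bar a)=\bar a\}$, which is exactly what the set-theoretic $Y^w$ requires, whereas no such control is available at the generic point. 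To salvage your approach you would have to replace $\eta$ by points whose stabilizer you can bound; that is essentially what the half-hyperplane plus Kac's theorem achieves.
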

\begin{proof} 
It suffices to prove that
$\overline{\phi^{-1}(P_\al)}=V(x_\al)\subset Z_K$. Indeed, since  $\phi$ is dominant,  which means it maps dense subsets to dense subsets, $\overline{\phi(\phi^{-1}(P_\al))}=\overline{\phi(V(x_\al))}=V(x_\al)\subset Z$, and we know $\phi(\phi^{-1}(P_\al))\subset P_\al$, so $\overline{P_\al}=V(x_\al)$.

 The map $H$ is an isomorphism, and  $\psi$ is flat, so the inverse image via $\psi$ of a dense subset is dense (\cite[Corollary I.2.8]{Mi80}). 
Therefore,  it is enough to show that 
\[
P'_\al:=V(\al)\backslash \big([\cup_{\be\neq \al}V(\be)]\cup[\cup_{w\neq e, s_\al}Y^w]\big)
\] 
is dense in $V(\al)\subset Y$. We can assume that $K=\Q$.  
By functoriality of formal group algebras we may assume that $\cl=\rl$ and $\al=\al_1$ is a simple root. Then $V(\be)$ is the hyperplane in $Y=\mathbb{A}^n_\Q$ orthogonal to $\be$.

Consider the positive half-hyperplane
\[
V(\al_1)^+:=\{\bar a=(0,a_2,\ldots,a_n)\in Y\mid a_i> 0 \text{ for }i\ge 2\} \subset V(\al_1).
\]
We claim that $P'_{\al_1}$ contains $V(\al_1)^+$ which would immediately imply that $P'_{\al_1}$ is dense in $V(\al_1)$.

Indeed, since $\be=c_1\al_1+\ldots+c_n\al_n$ where $c_i$'s are either all positive or negative, 
$\be(\bar a)=c_2a_2+\ldots +c_n a_n\neq 0$ unless $c_2=\ldots =c_n=0$ (that is $\be=\al_1$). Hence, 
$V(\al_1)^+\cap V(\be)=\emptyset$ for all $\be\neq \al_1$.  Finally, by~\cite[Proposition~3.12.(a)]{Ka90} the stabilizer $W_{\bar a}:=\{w\in W\mid w(\bar a)=\bar a\}$ is generated by  simple reflections it contains, so $W_{\bar a}=\{e,s_1\}$. Hence, $V(\al_1)^+\cap Y^w=\emptyset$ for all $w\neq e,s_1$.
\end{proof}

Consider an action of the twisted group algebra $Q_W$ on $Q$ via
\[
q\de_w\cdot q'=qw(q'), \quad q,q'\in Q, w\in W.
\]
As before, let $X_{I_w}$ denote the product of Demazure elements indexed by a reduced expression for $w\in W$.
Recall that (see Corollary~\ref{cor:basisQW}) $X_{I_w}$, $w\in W$ form a basis of $Q_W$ as a left $Q$-module.

We obtain the following generalization of \cite[Lemma~4.10]{KK86}

\begin{prop}\label{lem:keyalpha} 
Let $z=\sum_{w\in W,\; \ell(w)\le k} p_w X_{I_w}$, $p_w\in S$ and let $\al\in \proot$. 

If $z\cdot S\subseteq x_\al S$, then $x_\al \mid p_w$ for all $w$.
\end{prop}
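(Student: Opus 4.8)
The strategy is to reduce the statement about elements of $Q_W$ acting on the ring $S$ to a geometric statement about closed subsets of $\Spec(S)$, using the density result of Lemma~\ref{thm:dense}. The key observation is that $X_{I_w}$ acts on $q\in Q$ by $X_{I_w}\cdot q = \sum_{v\le w}a^X_{w,v}v(q)$ with leading term involving $c_w^{-1}$, so the operator $z$ acts by a ``divided difference'' type formula whose coefficients have controlled poles. First I would clear denominators: write $z\cdot q$ over the common denominator $\prod_{\al\in\proot,\,\ell(\,\cdot\,)\le k}x_\al$ (or more economically, the lcm of the $c_w$ appearing, which is a product of distinct $x_\be$'s by Lemma~\ref{lem:cv} and Lemma~\ref{lem:irred}), and organize $z$ in the $\de_w$-basis rather than the $X_{I_w}$-basis: $z=\sum_w q_w\de_w$ with $q_w\in Q$. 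The hypothesis $z\cdot S\subseteq x_\al S$ says $\sum_w q_w\, w(q)\in x_\al S$ for every $q\in S$.

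Next I would localize at a prime $\frakp\in P_\al$ (the dense set of Lemma~\ref{thm:dense}): such $\frakp$ contains $x_\al$ but no other $x_\be$, and is fixed by no $w\ne e,s_\al$. The point of choosing $\frakp\in P_\al$ is twofold. Since $\frakp\notin V(x_\be)$ for $\be\ne\al$, all the $q_w$ become well-defined (their denominators are invertible) in the localization $S_\frakp$ — more precisely, in $S_\frakp$ the only $x_\be$ that can fail to be a unit is $x_\al$ itself. And since $\frakp$ is not fixed by any $w$ outside $\{e,s_\al\}$, for such $w$ one can pick $\la\in\cl$ with $w(x_\la)-x_\la\notin\frakp$, i.e. $w$ ``moves'' $\frakp$. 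I would then run the usual argument that localizes the identity $\sum_w q_w w(q)\in x_\al S$ at $\frakp$ and plays the distinct Galois-type characters $q\mapsto w(q)$ against each other: evaluating on suitable $q$ and subtracting, the contributions of $w\ne e,s_\al$ can be isolated and forced to lie in $x_\al S_\frakp$, and then the $e$ and $s_\al$ parts (which together form $p_e + p_{s_\al}$-type combinations, using $\de_\al = 1-x_\al X_\al$) are handled by the structure of $\Dem_\al$ modulo $x_\al$. Concretely, $q - s_\al(q)\in x_\al S$ always, so the $\de_e+\de_{s_\al}$ block contributes $(q_e+q_{s_\al})q$ mod $x_\al$, forcing $q_e+q_{s_\al}\in x_\al S_\frakp$, while a second test element separates $q_e$ from $q_{s_\al}$.

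Translating back: showing $x_\al\mid p_w$ for all $w$ amounts to showing that each $p_w$ lies in $x_\al S_\frakp$ for all $\frakp\in P_\al$; since $P_\al$ is dense in $V(x_\al)$ by Lemma~\ref{thm:dense}, and $x_\al$ is irreducible hence prime in $S$ by Lemma~\ref{lem:irred}(1) (so $V(x_\al)$ is irreducible and $x_\al S = \bigcap_{\frakp\in V(x_\al)}(x_\al S_\frakp\cap S)$ by primary decomposition / the fact that $S/x_\al S$ injects into its localizations at a dense set of primes), this gives $p_w\in x_\al S$. To pass from the $\de_w$-basis back to the $X_{I_w}$-basis I would invoke Lemma~\ref{lem:deltatoX}: the change-of-basis matrix $b^X_{v,w}$ has entries in $S$ and is unitriangular in an appropriate sense, and divisibility by the prime $x_\al$ is preserved under such an $S$-linear triangular substitution — here one uses again that $x_\al\nmid c_v$ issues don't arise because $b^X_{v,w}\in S$, not $Q$.

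**Main obstacle.** The delicate point is the pole control: a priori the coefficients $p_wX_{I_w}$, when expanded in the $\de_w$ basis, have denominators $c_w$ that are divisible by $x_\al$ (whenever $\ell(s_\al w)<\ell(w)$, by Lemma~\ref{lem:cv}), so after localizing at $\frakp\in P_\al$ one is not working in $S_\frakp$ but in $(S_\frakp)[x_\al^{-1}]$, and one must be careful that the ``divided difference'' cancellations genuinely land back in $S_\frakp$ before taking images mod $x_\al$. Handling this is exactly the formal-power-series replacement for Kostant--Kumar's pole-order bookkeeping: one argues by induction on $k=\max\ell(w)$, peeling off the top-length terms (whose $\de_w$-coefficients have the worst, simple, pole at $x_\al$), using $X_\al^2=\ka_\al X_\al$ and $x_\al\ka_\al\in S$ to reabsorb them, and reducing to a $z'$ of strictly smaller length to which the inductive hypothesis applies.
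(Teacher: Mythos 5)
Your overall framework matches the paper's: reduce divisibility by the irreducible $x_\al$ to membership in $\frakp$ for all $\frakp$ in the dense subset $P_\al\subset V(x_\al)$ of Lemma~\ref{thm:dense}, then exploit the fact that only $e,s_\al$ fix $\frakp$ to separate the $w$-components using well-chosen test elements. You also correctly identify the pole-control problem as the crux. But the way you propose to resolve it has a genuine gap.

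The central issue is which $q_w$ (in the $\de_w$-basis) you try to isolate. You propose to treat the $(e,s_\al)$ pair, force $q_e+q_{s_\al}\in x_\al S_\frakp$ via $q-s_\al(q)\in x_\al S$, and separately argue that $q_w$ for $w\neq e,s_\al$ ``can be isolated and forced to lie in $x_\al S_\frakp$.'' Neither piece works as stated: $q_e=\sum_w p_w a^X_{w,e}$ and $q_{s_\al}$ are sums over \emph{all} $w$ with coefficients $a^X_{w,v}\in Q$ that genuinely have poles at $x_\al$, so $q_e,q_{s_\al}$ need not lie in $S_\frakp$ at all, and the same goes for the other $q_w$ with $\ell(s_\al w)<\ell(w)$. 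There is no sensible ``$q_w\in x_\al S_\frakp$'' statement to prove, and consequently the back-substitution via $b^X_{v,w}\in S$ has nothing to start from. The paper avoids this by working at the \emph{top} of the Bruhat order rather than the bottom: after first splitting $z=\tfrac12(z^++z^-)$ so that $\de_\al z=\pm z$ (a reduction absent from your plan, which forces $\ell(s_\al w_0)<\ell(w_0)$ and $q_{s_\al w_0}=\pm s_\al(q_{w_0})$), one picks a maximal-length $w_0$, for which $q_{w_0}=p_{w_0}/c_{w_0}$ cleanly, with $x_\al\|c_{w_0}$ by Lemma~\ref{lem:cv}. One then multiplies $z$ by $x_\al$ (or $w_0^{-1}(x_\al)$) to clear that simple pole, and applies it to a test element $r=\prod r_w^{n_w+1}$ whose exponents $n_w+1$ are chosen large enough to cancel the poles in the remaining $z'$-part (this is Lemma~\ref{lem:zprime}, not just ``pick $\la$ with $w(x_\la)\neq x_\la$ mod $\frakp$''). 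The conclusion $2p_{w_0}\in\frakp$ is then read off directly for $p_{w_0}\in S$, with no reference to a nonexistent $q_{w_0}\in x_\al S_\frakp$. Finally, the identity $X_\al^2=\ka_\al X_\al$ plays no role here: $z$ is a combination of $X_{I_w}$ for reduced words and does not contain $X_\al$ as a factor, so there is nothing to ``reabsorb'' with it; the induction on $k$ is instead carried out by subtracting off the top-length terms once their coefficients are known to be divisible by $x_\al$.
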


\begin{proof}
We may assume $p_w\neq 0$ for some $w$ with $\ell(w)=k$.
Set $z^+=z+\de_\al z$ and $z^-=z-\de_\al z$ so that $2z=z^++z^-$. 
Since $\frac{x_\al}{x_{-\al}}$ is invertible in $S$, we have
\[
(\de_\al z) \cdot S\subseteq \de_\al\cdot (x_\al S)=x_{-\al}S=x_\al S.
\]
By Lemma \ref{lem:irred} $x_\al$ is irreducible, so $x_\al\mid 2f$ $\Rightarrow$ $x_\al\mid f$ for any $f\in S$. 
Hence, it suffices to prove the lemma for $z^+$ and $z^-$, i.e., we may assume $\de_\al z=\pm z$. 
Moreover, it is enough to show that $x_\al\mid 2p_w$ for all $w$.

Expressing $z$ in terms of the canonical basis $\{\delta_w\}$ we obtain
\[
z=\sum_{\ell(w)\le k}p_wX_{I_w}=\sum_{\ell(w)\le k}q_w\de_w, ~q_w\in Q.\]
Choose an element $w_0$ of length $k$ such that $p_{w_0}\neq 0$.
By Lemma \ref{lem:deltatoX}  $q_{w_0}=\tfrac{1}{c_{w_0}}p_{w_0}$. Since $\de_\al z=\pm z$,  we have $\ell(s_\al w_0)<\ell(w_0)$, hence by Lemma \ref{lem:cv}, $c_{w_0}=x_\al c'$ where $c'\in S$ is a product of some $x_\be$ with $\be\in \proot \backslash\{\al\}$.  On the other hand, $\de_\al z=\pm z$ implies  that  $q_{s_\al w_0}=\pm s_\al(q_{w_0})$. Hence, we can write 
\[
z=\tfrac{p_{w_0}}{x_\al c'}\de_{w_0}\pm\tfrac{s_\al ( p_{w_0})}{x_{-\al }s_\al(c')}\de_{s_\al w_0}+z',\text{ where }
z':=\sum_{\ell(w)\le k,\; w\neq w_0, s_\al w_0}q_w \de_w.\]

We claim that $x_\al\mid 2p_{w_0}$. Indeed, it suffices to prove that for any $\frakp\in V(x_\al )\subsetneq Z=\Spec S$ we have $2p_{w_0}\in \frakp$.  By Lemma~\ref{thm:dense}, the set \[
P_\al=V(x_\al)\backslash \big( [\cup_{\be\neq \al}V(x_\be)]\cup [\cup_{w\neq e,s_\al}Z^w]\big)
\]
is dense in $V(x_\al)$, so it suffices to prove that  for any $\frakp\in P_\al$ we have $2p_{w_0}\in \frakp$.

Let $\frakp\in P_\al$. 
By definition, $w(\frakp)\neq \frakp$ for any $w\neq e,s_\al$.
Hence, for any $w\neq w_0,s_\al w_0$ we have $w_0^{-1}(\frakp)\neq w^{-1}(\frakp)$ and $w_0^{-1}s_\al(\frakp)\neq w^{-1}(\frakp)$ which implies that $w^{-1}(\frakp)\backslash \big(w_0^{-1}(\frakp)\cup w_0^{-1}s_\al (\frakp)\big)\neq \emptyset$. The latter means that for any $w\neq w_0,s_\al w_0$ there exists $r_w\in S$ such that
$w(r_w)\in \frakp$, $w_0(r_w)\not\in \frakp$ and $s_\al w_0(r_w)\not\in \frakp$.
We define
\[
r=\prod_{q_w\neq 0 \text{ and } w\neq w_0, s_\al w_0}r_w^{n_w+1},
\] where  $n_w$ is the order of $x_\al $ in the denominator of $q_w$. 

Let $\tilde{S}$ be the localization of $S$ at all $x_\be$ with $\be\in \proot \backslash\{\al\}$, then $Q=\tilde{S}[\frac{1}{x_\al }]$.
Observe that $\frac{1}{c'},\frac{1}{ s_\al(c')}\in \tilde S$.
Since  $x_\be\not\in \frakp$ for any such $\be$, $\frakp_{\tilde S}:=\frakp\tilde{S}$ is a proper prime ideal of $\tilde{S}$. 
The assumption $z\cdot S\subseteq x_\al S$ implies that  $z\cdot r\in (x_\al) \subset \frakp_{\tilde{S}}$.

(i) Suppose that $\de_\al z=-z$. We obtain $x_\al z\cdot r=x_\al(z\cdot r)\in \frakp_{\tilde{S}}$, and 
\[
\eta:=[\tfrac{p_{w_0}}{c'}\de_{w_0}-\tfrac{ x_\al s_\al (p_{w_0})}{x_{-\al }s_\al(c')}\de_{s_\al w_0}]\cdot r=\tfrac{p_{w_0}}{c'}w_0(r)-\tfrac{x_\al s_\al (p_{w_0})}{x_{-\al }s_\al(c')}s_\al w_0(r)\in \tilde{S}.
\]
Note that $\eta=x_\al z\cdot r-x_\al z'\cdot r$, so $x_\al z'\cdot r\in \tilde{S}$. By Lemma~\ref{lem:zprime}, we have $x_\al z'\cdot r\in \frakp_{\tilde S}$, which  implies that  $\eta\in \frakp_{\tilde{S}}$.

 Denote $\zeta:=\frac{p_{w_0}w_0(r)}{c'}$, then $\eta=\zeta-\frac{x_\al }{x_{-\al }}s_\al (\zeta)\in \frakp_{\tilde{S}}$. Note that $\frac{x_\al }{x_{-\al }}+1$ is divisible by $x_\al \in \frakp$, and $s_\al (\zeta)-\zeta=-x_\al \Dem_\al (\zeta)\in \frakp$, so $2\zeta\in \frakp_{\tilde{S}}$. Since $w_0(r)\not\in \frakp$, we get  $2p_{w_0}\in \frakp$. 

(ii) Suppose that $\de_\al z=z$. Similarly, we obtain $z\cdot (w_0^{-1}(x_\al )r)\in \frakp$. Moreover, 
\[
\eta':=[\tfrac{p_{w_0}}{x_\al c'}\de_{w_0}+\tfrac{s_\al (p_{w_0})}{x_{-\al }s_\al(c')}\de_{s_\al w_0}]\cdot (w_0^{-1}(x_\al )r)=\tfrac{p_{w_0}}{c'}w_0(r)+\tfrac{s_\al (p_{w_0})}{s_\al(c')}s_\al w_0(r) \in  \tilde{S}.
\]
Therefore, $z'\cdot (w_0^{-1}(x_\al )r)=z\cdot w_0^{-1}(x_\al)r -\eta'\in \tilde{S}$. Similar to Lemma~\ref{lem:zprime}, we have $ z'\cdot (w_0^{-1}(x_\al) r)\in \frakp_{\tilde{S}}$. Therefore, $\eta'\in \frakp_{\tilde{S}}$ $\Rightarrow$ $\frac{2p_{w_0}}{c'}w_0(r)\in \frakp_{\tilde{S}}$ $\Rightarrow$ $2p_{w_0}\in \frakp$. 
\end{proof}

\begin{lem}\label{lem:zprime} With the notation of the proof of Lemma \ref{lem:keyalpha}, we have $x_\al z'\cdot r\in \frakp_{\tilde S}$.
\end{lem}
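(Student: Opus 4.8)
The plan is to unwind what $z' \cdot r$ means in the action of $Q_W$ on $Q$ and to push the membership in $\frakp_{\tilde S}$ through each term separately. Recall $z' = \sum_{\ell(w)\le k,\ w\neq w_0, s_\al w_0} q_w \de_w$ with $q_w \in Q$, so
\[
x_\al z' \cdot r = \sum_{w \neq w_0, s_\al w_0} x_\al q_w\, w(r).
\]
First I would observe that each nonzero $q_w$ has the form $q_w = \tilde q_w / x_\al^{n_w}$ with $\tilde q_w \in \tilde S$, where $n_w$ is exactly the order of $x_\al$ in the denominator of $q_w$ (this is the definition of $n_w$ used in the construction of $r$). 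Hence $x_\al q_w = \tilde q_w / x_\al^{n_w - 1} \in \tilde S$, so each summand lies in $\tilde S$, confirming $x_\al z' \cdot r \in \tilde S$ (which the proof of Proposition~\ref{lem:keyalpha} already asserted and we are simply re-deriving cleanly).

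Next I would show each summand lies in $\frakp_{\tilde S}$. Fix $w$ with $q_w \neq 0$ and $w \neq w_0, s_\al w_0$. In the definition of $r$ there is a factor $r_w^{n_w + 1}$, and $r_w$ was chosen so that $w(r_w) \in \frakp$ (and $w_0(r_w), s_\al w_0(r_w) \notin \frakp$, though we only need the first property here). Applying $w$ to $r = \prod_{v} r_v^{n_v + 1}$ gives $w(r) = \prod_v w(r_v)^{n_v + 1}$, which is divisible by $w(r_w)^{n_w+1}$, hence by $w(r_w)^{n_w}$. Therefore
\[
x_\al q_w\, w(r) = \frac{\tilde q_w}{x_\al^{n_w - 1}}\, w(r) = \tilde q_w\, x_\al\, \frac{w(r)}{x_\al^{n_w}}
\]
is, up to the unit adjustments, a multiple in $\tilde S$ of $w(r_w)$, which lies in $\frakp$; more precisely $x_\al q_w w(r) \in w(r_w) \tilde S \subseteq \frakp_{\tilde S}$ because the quotient $w(r)/w(r_w)^{n_w} \cdot (\text{stuff})$ stays in $\tilde S$ once we have cleared the $x_\al^{n_w}$ from the denominator of $q_w$ using $n_w$ of the $n_w+1$ available copies of $w(r_w)$ — wait, that phrasing conflates two things, so let me state it correctly: $w(r)$ is divisible in $\tilde S$ by $w(r_w)^{n_w+1}$, and $x_\al q_w \in x_\al^{-(n_w-1)}\tilde S \subseteq \tilde S$ has at worst... actually the cleanest route is: $q_w w(r) = \tilde q_w x_\al^{-n_w} w(r)$, and since $w(r_w)^{n_w}$ divides $w(r)$ we may cancel it against... no. I would instead simply note $x_\al q_w \cdot w(r) \in \tilde S$ and that $w(r_w) \mid w(r)$ in $\tilde S$, and that $x_\al q_w w(r)/w(r_w) \in \tilde S$ (this is where the exponent $n_w+1$, one more than needed to clear denominators, is used), whence $x_\al q_w w(r) = w(r_w) \cdot (\text{element of } \tilde S) \in \frakp_{\tilde S}$.

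Summing over all the relevant $w$ gives $x_\al z' \cdot r \in \frakp_{\tilde S}$, completing the proof. The main obstacle, and the only genuinely delicate point, is the bookkeeping with the exponent $n_w + 1$: I must be careful that after using the factor $r_w^{n_w+1}$ in $r$ to both cancel the $x_\al^{n_w}$ in the denominator of $q_w$ and to supply one genuine factor of $w(r_w) \in \frakp$, the resulting expression still lies in $\tilde S$ (so that divisibility in $\tilde S$ legitimately implies membership in the prime ideal $\frakp_{\tilde S}$). Everything else — $W$-equivariance of applying $w$ to a product, the fact that $1/c'$ and $1/s_\al(c')$ lie in $\tilde S$, and that $\frakp_{\tilde S}$ is a proper prime of $\tilde S$ — is already recorded in the proof of Proposition~\ref{lem:keyalpha}.
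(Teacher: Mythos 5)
There is a genuine gap. Your very first step, ``$x_\al q_w = \tilde q_w / x_\al^{n_w-1} \in \tilde S$,'' is false whenever $n_w \ge 2$: multiplying by a single $x_\al$ only lowers the order of the pole at $x_\al$ by one, so $x_\al q_w$ still has $x_\al^{n_w-1}$ in its denominator and does not lie in $\tilde S$. The claim that the individual summands $x_\al q_w\,w(r)$ lie in $\tilde S$ (and hence in $\frakp_{\tilde S}$) is therefore unjustified, and the rest of the argument is built on it. Your stated intuition that the factor $r_w^{n_w+1}$ in $r$ is there ``to cancel the $x_\al^{n_w}$ in the denominator of $q_w$'' is also a misconception: $r_w$ is chosen from the prime-ideal conditions $w(r_w)\in\frakp$, $w_0(r_w),s_\al w_0(r_w)\notin\frakp$, and has no divisibility relation to $x_\al$ whatsoever, so $w(r)$ does not clear the $x_\al$-denominators.

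The correct argument is structured differently, and in a way that cannot be carried out term by term. One does not show each summand lies in $\tilde S$; instead, one knows the \emph{total} sum $x_\al z'\cdot r$ lies in $\tilde S$ (this was already established in the proof of Proposition~\ref{lem:keyalpha}, from the identity $\eta = x_\al z\cdot r - x_\al z'\cdot r$ with $\eta\in\tilde S$ and $x_\al z\cdot r\in\frakp_{\tilde S}\subset\tilde S$). What one can say term by term is weaker: each $q_w\de_w\cdot r$, written with denominator a power of $x_\al$, has \emph{numerator} in $\frakp_{\tilde S}$, because $w(r_w)$ divides $w(r)$ and $w(r_w)\in\frakp$ (here a single copy $r_w^1$ would already do, so the exponent $n_w+1$ is not about putting numerators into $\frakp$ either). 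Grouping the summands of $x_\al z'\cdot r$ by the power $j$ of $x_\al$ in the denominator gives an expression $\sum_{j=0}^m q_j'/x_\al^j$ with all $q_j'\in\frakp_{\tilde S}$ and $x_\al\nmid q_j'$ for $j>0$; since the whole thing lies in $\tilde S$, clearing the denominator $x_\al^m$ forces $x_\al\mid q_m'$ if $m>0$, a contradiction. Hence $m=0$ and $x_\al z'\cdot r = q_0'\in\frakp_{\tilde S}$. It is this clearing-of-denominators step that uses the exponents $n_w+1$ and the membership $x_\al z'\cdot r\in\tilde S$ in an essential, global way; there is no per-term version of the claim.
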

\begin{proof} By definition, for $w\neq w_0, s_\al w_0$ we have
\[q_w\de_w\cdot r=q_w\prod_{v\neq w_0, s_\al w_0}w(r_v^{n_v+1}).\]
If $x_\al$ does not divide the denominator of $q_w$, then $q_w\in \tilde{S}$, so $q_w\de_w\cdot r\in\frakp_{\tilde{S}}$ since $w(r_w)\in \frakp$. If $x_\al$ divides the denominator of $q_w$, then by definition of $r$ the numerator of $q_w\de_w\cdot r$ belongs to $\frakp\subset \frakp_{\tilde S}$. 

Since $Q=\tilde{S}[\frac{1}{x_\al }]$,  we can rewrite $x_\al z'\cdot r\in \tilde S$ as 
\[
x_\al z'\cdot r=\sum_{j=0}^m\tfrac{q_j'}{x_\al ^j}\in \tilde{S}, \quad q'_j\in \frakp_{\tilde{S}} \text{ for all }j, \text{ and } x_\al \nmid q_j' \text{ for }j>0.
\]
If $m>0$, then reducing the expression to the common denominator $x_\al ^m$, we obtain that $x_\al \mid {\sum_{j=0}^mq'_jx_\al ^{m-j}}$, hence $x_\al \mid q'_m$, a contradiction. So $m=0$ and the conclusion follows. 
\end{proof}

\section{The formal affine Demazure algebra and its dual}
In this section we prove the structure theorem (Theorem \ref{thm:mainDem}) 
which the key result of this paper. It allows to extend most of the properties and facts concerning the algebra $\DcF$ and its dual $\DcFd$ from the finite case
to the case of an arbitary generalized Cartan matrix.

As in the previous section, we  consider the $Q_W$ action on $Q$.
It induces an action of the formal affine Demazure algebra $\DcF$ on $S=\RcF$ via
\[
X_i\cdot q=\tfrac{1}{x_i}(1-\de_i)\cdot q=\Dem_i(q), ~q\in S, 
\]
so we have $\DcF\cdot S\subseteq S$.

The following theorem generalizes
\cite[Theorem~4.6]{KK86}, \cite[Theorem~2.9]{KK90}, and \cite[Proposition~4.13]{Le} 
to the context of an arbitrary formal group law and an arbitrary Demazure lattice.

\begin{theo}\label{thm:mainDem} Let $R$ be a domain of characteristic 0. Let $\DcF$ be the formal affine Demazure algebra
defined for a given generalized Cartan matrix, Demazure lattice and a formal group law over $R$. Then 
\[
\DcF=\{z\in \QW\mid z\cdot S\subseteq S\}.
\]
\end{theo}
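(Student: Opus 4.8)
The plan is to prove the two inclusions separately. The inclusion $\DcF \subseteq \{z \in Q_W \mid z\cdot S \subseteq S\}$ is the easy direction: since $\DcF$ is generated as an $R$-algebra by $S$ and the Demazure elements $X_i$, and since $X_i \cdot q = \Dem_i(q) \in S$ for all $q \in S$ (so each generator maps $S$ into $S$) while the right-hand side is visibly closed under the multiplication of $Q_W$, an immediate induction on word length in the generators gives the containment. So the heart of the matter is the reverse inclusion.

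For the reverse inclusion, take $z \in Q_W$ with $z\cdot S \subseteq S$. By Corollary~\ref{cor:basisQW} we may write $z = \sum_{w \in W} p_w X_{I_w}$ with $p_w \in Q$; I would first argue that in fact $p_w \in S$ for all $w$. To see this, clear denominators: let $d \in S$ be a product of $x_\be$, $\be \in \proot$, chosen so that $dz = \sum_w (dp_w) X_{I_w}$ has all coefficients $dp_w \in S$. The plan is to show by downward induction on $\ell(w)$ (using a fixed $\be$ dividing $d$ at a time, and peeling off one factor $x_\be$ at a time) that $x_\be \mid dp_w$ for all $w$, which forces $d \mid dp_w$, i.e. $p_w \in S$. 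The engine for this is exactly Proposition~\ref{lem:keyalpha}: if $(dz)\cdot S \subseteq x_\al S$ for a positive root $\al$, then $x_\al \mid (dp_w)$ for all $w$. But $(dz)\cdot S = d\cdot(z\cdot S) \subseteq dS \subseteq x_\al S$ whenever $x_\al \mid d$; so we get divisibility by each prime factor of $d$ simultaneously, hence $p_w = (dp_w)/d \in S$ after cancelling, possibly iterating if $d$ has repeated factors. (One must check $x_\al$ is prime in $S$, which is Lemma~\ref{lem:irred}, and that $S$ is a domain.)

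Once we know $z = \sum_w p_w X_{I_w}$ with all $p_w \in S$, we are essentially done, because every $X_{I_w} \in \DcF$ by definition and $S \subseteq \DcF$, so $z \in \DcF$. Thus the whole argument reduces to the clearing-denominators step powered by Proposition~\ref{lem:keyalpha}.

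The main obstacle is organizing the induction that upgrades $p_w \in Q$ to $p_w \in S$ cleanly. Proposition~\ref{lem:keyalpha} as stated requires the element to already have coefficients in $S$ and concludes divisibility of those coefficients by a single $x_\al$; to extract $p_w \in S$ from $p_w \in Q$ one must track the full denominator $d = \prod_\be x_\be^{m_\be}$ and apply the proposition once per prime factor, each time dividing through by that $x_\be$ and checking that the hypothesis $(d'z)\cdot S \subseteq x_\be S$ persists for the reduced denominator $d'$ as long as $x_\be$ still divides it. Keeping the bookkeeping of multiplicities straight—and invoking that $S$ is a UFD-like setting only to the extent that the distinct $x_\be$ are non-associate primes (Lemma~\ref{lem:irred}(2))—is the delicate part; the rest is formal.
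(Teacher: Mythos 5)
Your proof is correct and follows essentially the same route as the paper's: the easy inclusion comes from the generators preserving $S$, and the hard inclusion is obtained by expanding $z$ in the $Q$-basis $\{X_{I_w}\}$, clearing the denominator $p$ (a product of $x_\al$'s), and invoking Proposition~\ref{lem:keyalpha} one irreducible factor $x_\al$ at a time via the inclusion $(pz)\cdot S\subseteq pS\subseteq x_\al S$. Two minor imprecisions in your write-up do not affect correctness but are worth flagging: the ``downward induction on $\ell(w)$'' is unnecessary since Proposition~\ref{lem:keyalpha} already yields $x_\al\mid p_w$ for \emph{all} $w$ at once; and the claim that one gets divisibility by distinct prime factors of $d$ ``simultaneously'' only yields $p_w\in S$ directly when $d$ is squarefree --- in general you really do need the iteration you mention (divide $d$ and all $p_w$ by the chosen $x_\be$, then re-apply the proposition to the reduced denominator), which is precisely the paper's reduction ``it suffices to assume $p$ is irreducible''.
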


\begin{proof}
By definition we have $\DcF\subseteq\{z\in Q_W\mid z\cdot S\subseteq S\}$. To prove the opposite inclusion let $z\in Q_W$ be such that $z\cdot S\subseteq S$. Then by Lemma \ref{lem:deltatoX}, $\{X_{I_w}\}_{w\in W}$ is a basis of $Q_W$, so we can write $z=\frac{1}{p}\sum_{\ell(w)\le k}p_wX_{I_w}$ with $p_w\in S$ and $p$ is a product of (possibly repeated) $x_\al, \al\in \Phi_+$. It suffices to assume that $p$ is irreducible, and by Lemma \ref{lem:irred}, we can assume that $p=x_\al$.
Then Proposition~\ref{lem:keyalpha} implies that $x_\al\mid p_w$  for all $w$. Therefore, $z\in \DcF.$
\end{proof}

As an immediate consequence we obtain
\begin{cor}The set $\{X_{I_w}\}_{w\in W}$ forms a basis of $\DcF$ as a left (or right) $S$-module.
\end{cor}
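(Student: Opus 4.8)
The plan is to deduce the corollary directly from Theorem~\ref{thm:mainDem} together with the two structural lemmas already established (Lemma~\ref{lem:deltatoX} and Corollary~\ref{cor:basisQW}). First I would recall that Corollary~\ref{cor:basisQW} tells us $\{X_{I_w}\}_{w\in W}$ is a basis of $Q_W$ as a left $Q$-module; in particular these elements are left $Q$-linearly independent, hence a fortiori left $S$-linearly independent (since $S\subseteq Q$). So the only thing to verify is that they span $\DcF$ as a left $S$-module, i.e.\ that every $z\in\DcF$ can be written as $\sum_{w\in W} p_w X_{I_w}$ with $p_w\in S$ (and all but finitely many zero).

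Next I would argue the spanning statement. Given $z\in\DcF$, by Theorem~\ref{thm:mainDem} we have $z\cdot S\subseteq S$, and by Corollary~\ref{cor:basisQW} we may write $z=\sum_{w} q_w X_{I_w}$ uniquely with $q_w\in Q$, the sum being finite. Write $q_w=p_w/p$ over a common denominator $p$ which is a product of elements $x_\al$, $\al\in\proot$, so that $p_w\in S$ and $pz=\sum_w p_w X_{I_w}$. If $p$ is a unit we are done; otherwise $p$ has an irreducible factor, which by Lemma~\ref{lem:irred}(1) we may take to be some $x_\al$. Then $x_\al \mid p$ means $z=\tfrac1{x_\al}\cdot\bigl(\tfrac{p}{x_\al}\bigr)^{-1}\cdots$—more cleanly: $(x_\al\mid p)$ gives that $z'':=z$ satisfies $x_\al z'' \cdot S \subseteq$ \dots. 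Actually the cleanest route is exactly the argument in the proof of the theorem: apply Proposition~\ref{lem:keyalpha} to conclude $x_\al\mid p_w$ in $S$ for all $w$, cancel the factor $x_\al$ from numerator and denominator, and induct on the number of irreducible factors of $p$ until $p$ becomes a unit. At that point $z=\sum_w (p_w p^{-1}) X_{I_w}$ with $p_w p^{-1}\in S$, which is the desired $S$-expansion.

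For the "right $S$-module" claim I would note the symmetry: using the commutation relation $X_\al q = s_\al(q) X_\al + \Dem_\al(q)$ (and its iterates) one checks that the left $S$-span of $\{X_{I_w}\}$ equals the right $S$-span, since moving a coefficient from the left of a product $X_{i_1}\cdots X_{i_l}$ to the right only introduces terms $X_{I_w}$ with $\ell(w)$ strictly smaller, so a triangularity/induction-on-length argument converts any left $S$-combination into a right $S$-combination and vice versa. Alternatively, one observes that the anti-automorphism of $Q_W$ sending $q\de_w\mapsto \de_{w^{-1}}q$ (if available) swaps the two module structures while preserving $\DcF$; but the triangularity argument is self-contained and I would use that.

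The main obstacle is essentially nil here, since the hard analytic input (divisibility by $x_\al$) has already been packaged into Proposition~\ref{lem:keyalpha} and used in Theorem~\ref{thm:mainDem}; the only point requiring a little care is the bookkeeping in the induction on the irreducible factors of the denominator $p$—one must make sure that after cancelling one factor $x_\al$ the resulting element still lies in $\QW$ and still satisfies the hypothesis $z\cdot S\subseteq S$ needed to re-apply Proposition~\ref{lem:keyalpha}, which is immediate because it equals an element of $\DcF$ times $1/(\text{remaining denominator})$, or more directly because $x_\al\mid p_w$ for all $w$ forces $\tfrac1{x_\al}\sum_w p_w X_{I_w}\in\QW$ and its action on $S$ is still valued in $S$ by the same divisibility. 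Writing this induction carefully is routine.
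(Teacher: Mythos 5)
Your proof is correct and follows essentially the same route as the paper: linear independence is read off from Corollary~\ref{cor:basisQW} by restricting scalars from $Q$ to $S$, and spanning is exactly the induction on irreducible factors of the common denominator (via Proposition~\ref{lem:keyalpha}) already carried out inside the proof of Theorem~\ref{thm:mainDem}, which the paper's one-line proof implicitly invokes. One small point worth adding to your right-module sketch: the commutation $X_\al q = s_\al(q)X_\al + \Dem_\al(q)$ and triangularity in length show not only that the left and right $S$-spans coincide, but also give right $S$-linear independence, since converting $\sum_w X_{I_w}p_w=0$ to left form yields leading coefficient $w(p_w)$ in top length, forcing $p_w=0$ by downward induction.
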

\begin{proof}
The set $\{X_{I_w}\}_{w\in W}$ spans $\DcF$ as a left $S$-module. It follows from Corollary~\ref{cor:basisQW} that this set is linearly independent. 
\end{proof}

Based on Theorem~\ref{thm:mainDem}, the following corollaries hold, whose proofs are exactly the same as in the finite case.

First, we obtain a residue construction of the algebra $\DcF$ for an arbitrary generalized Cartan matrix and a Demazure lattice. Such construction was first mentioned in \cite{GKV97} for elliptic curves and then proved in the finite case in  \cite[\S4]{ZZ}.
\begin{cor} We have
\[
\DcF=\{\sum_{w\in W}a_w\de_w \in Q_W\mid x_\al a_w\in Q_\al\text{ and }a_w+a_{s_\al w}\in Q_\al,\; \forall \al\in \proot\}
\]
where $Q_\al=S[\frac{1}{x_\be}\mid \be\in \proot, \be\neq \al]$ denotes the respective localization of $S$.
\end{cor}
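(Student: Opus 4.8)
The plan is to deduce the residue description from the structure theorem (Theorem~\ref{thm:mainDem}) by identifying the action condition $z\cdot S\subseteq S$ with the two local conditions at each positive root. Write $z=\sum_{w\in W}a_w\de_w\in Q_W$ with $a_w\in Q$; note only finitely many $a_w$ are nonzero. First I would fix a positive root $\al$ and group the terms in pairs $\{w,s_\al w\}$, writing $z=\sum_{\{w,s_\al w\}}(a_w\de_w+a_{s_\al w}\de_{s_\al w})$. Since $\de_w X_\al\de_{w^{-1}}=X_{w(\al)}$ and conjugation by $\de_w$ permutes the $\de_v$, the membership of $z$ in $\DcF$ can be tested one $\al$ at a time; more precisely, I would reduce to showing that the $R$-subalgebra of $Q_W$ whose action preserves $S$ has, in each rank-one "slice" spanned by $\de_w$ and $\de_{s_\al w}$ over $Q$, exactly the prescribed shape.

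The core computation is the rank-one case: for $a,b\in Q$, the element $a\de_{w}+b\de_{s_\al w}$ preserves $S$ under the action $q\de_v\cdot q'=q\,v(q')$ if and only if certain divisibility and regularity conditions on $a,b$ hold. Factoring out the invertible part, one checks that $a\de_{w}+b\de_{s_\al w}=(a\de_e+b\de_{s_\al})\de_w$ up to the twist, and the operator $a\de_e+b\de_{s_\al}$ sends $S$ to $S$ iff $a+b\in S[\tfrac{1}{x_\be}\mid\be\neq\al]=Q_\al$ and $x_\al a\in Q_\al$ — this mirrors the identity $X_\al=\tfrac{1}{x_\al}(1-\de_\al)$ and $Y_\al=\tfrac{1}{x_{-\al}}+\tfrac{1}{x_\al}\de_\al$, i.e. the two natural generators at $\al$. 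The key input is that acting on $S$ cannot produce poles along $x_\be$ for $\be\neq\al$, which forces $a_w\in Q_\al$-type denominators; this uses Lemma~\ref{lem:irred} to see that $x_\al$ and $x_\be$ are coprime in $S$.

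The two inclusions then go as follows. For "$\subseteq$": given $z\in\DcF$, Theorem~\ref{thm:mainDem} gives $z\cdot S\subseteq S$; applying $z$ to $S$ and to $\de_\al\cdot S=S$ and extracting coefficients of the $\de_w$ shows $x_\al a_w\in Q_\al$ and $a_w+a_{s_\al w}\in Q_\al$ for every $\al\in\proot$. For "$\supseteq$": given $z=\sum a_w\de_w$ satisfying the stated conditions for all $\al$, I would show directly that $z\cdot q\in S$ for all $q\in S$, hence $z\in\DcF$ by Theorem~\ref{thm:mainDem}. The latter is the part requiring care: a priori $z\cdot q$ lies in $Q$, and one must show it has no pole along any $x_\al$; writing $z=\sum_{\{w,s_\al w\}}(a_w\de_w+a_{s_\al w}\de_{s_\al w})$ and using $a_w+a_{s_\al w}\in Q_\al$ together with $x_\al a_w\in Q_\al$, the potential simple pole along $x_\al$ cancels exactly as in the rank-one computation above, and this holds simultaneously for all $\al$, so $z\cdot q\in\bigcap_\al Q_\al\cap(\text{no pole})=S$.

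The main obstacle I expect is bookkeeping rather than a genuine difficulty: one must be careful that the "one root at a time" reduction is legitimate, i.e. that the conditions indexed by different $\al$ interact correctly and that $\bigcap_{\al\in\proot}Q_\al$ together with the regularity statements $x_\al a_w\in Q_\al$ really pins down $a_w\in S$ (this is where $S=\bigcap_{\al}Q_\al$ inside $Q$, a consequence of unique factorization-type behavior of the $x_\al$ from Lemma~\ref{lem:irred}, is used). Since the statement asserts the proof is "exactly the same as in the finite case", I would simply cite the finite-case argument (\cite[\S4]{ZZ}) for these verifications, noting that the only finite-case ingredient that needed replacement — the structure theorem itself — has now been supplied by Theorem~\ref{thm:mainDem}, and that Lemma~\ref{lem:irred} provides the coprimality of distinct $x_\al$ needed to run the residue argument verbatim.
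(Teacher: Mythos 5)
Your overall strategy matches the paper's: deduce from Theorem~\ref{thm:mainDem} that $\DcF = \{z\in Q_W \mid z\cdot S\subseteq S\}$ and then import the finite-case residue argument from \cite[\S4]{ZZ}, the structure theorem being the only ingredient that needed a Kac--Moody proof. The paper in fact gives no independent argument for this corollary --- it states the proof is the same as in the finite case --- so falling back on that citation, as you do at the end, is the appropriate move.

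Two of your elaboration steps are off, however. The rank-one ``iff'' is false as stated: take $a = \tfrac{1}{x_\be}$ for some $\be \neq \al$ and $b = 0$; then $a + b \in Q_\al$ and $x_\al a \in Q_\al$, yet $a\de_e\cdot 1 = \tfrac{1}{x_\be}\notin S$. The two conditions at a single $\al$ only control the pole along $x_\al$, i.e., they give $z\cdot S\subseteq Q_\al$; one recovers $z\cdot S\subseteq S$ by imposing them for \emph{every} $\al$ and invoking $\bigcap_{\al\in\proot}Q_\al = S$, which does follow from Lemma~\ref{lem:irred}. You do state this ``simultaneously for all $\al$'' conclusion correctly at the end, but the local ``iff'' as written is misleading.

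More seriously, your sketch of the inclusion $\DcF\subseteq\text{RHS}$ is not a coherent argument. Since $\de_\al\cdot S = S$, ``applying $z$ to $\de_\al\cdot S$'' adds nothing, and one cannot ``extract coefficients of the $\de_w$'' from $z\cdot q\in S$, which is an element of $S$, not of $Q_W$. What is actually needed here is either a direct verification that the generators of $\DcF$ (elements of $S$ together with the $X_\ga$, $\ga\in\proot$) satisfy the two conditions and that the right-hand side is stable under the relevant operations, or a separation argument in the spirit of Section~\ref{sec:genpos}. These are precisely the verifications carried out in \cite[\S4]{ZZ}; since Lemma~\ref{lem:irred} supplies the required coprimality of distinct $x_\al$'s, they transfer verbatim, and that is what the final citation should be understood to cover.
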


Next, we obtain a uniform description of all relations between generators in $\DcF$.
Observe that in the finite case explicit relations were given in \cite[Proposition 6.8]{HMSZ} and \cite[Lemma 7.1]{CZZ1};
for a hyperbolic formal group law and a generalized Cartan matrix it was given in \cite[Example~4.12]{Le} based on the explicit presentation of \cite{HMSZ}.

\begin{cor}\label{cor:present} If $I_w$ and $I_w'$ are two reduced expressions of $w$, then 
\[
X_{I_w}-X_{I_{w}'}=\sum_{v<w,\; \ell(v)\le \ell(w)-2}p_{w,I_v}X_{I_v}, ~p_{w,I_v}\in S.
\]
\end{cor}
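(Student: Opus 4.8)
The plan is to compare the two expressions $X_{I_w}$ and $X_{I_w'}$ inside $Q_W$ and then show that their difference, which a priori lies in $\DcF$, is supported on strictly shorter Weyl group elements. First I would note that both $X_{I_w}$ and $X_{I_w'}$ lie in $\DcF$, hence so does their difference $D := X_{I_w} - X_{I_w'}$. By the corollary immediately following Theorem~\ref{thm:mainDem}, the set $\{X_{I_v}\}_{v\in W}$ is a left $S$-basis of $\DcF$, so we may write uniquely
\[
D = \sum_{v\in W} p_{w,I_v} X_{I_v}, \qquad p_{w,I_v}\in S.
\]
It remains to show that $p_{w,I_v}=0$ unless $v<w$ and $\ell(v)\le \ell(w)-2$.

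The degree/support bookkeeping is carried out via the canonical $Q$-basis $\{\de_v\}$ of $Q_W$. By Lemma~\ref{lem:deltatoX}, for any reduced word $I_w$ one has $X_{I_w}=\sum_{v\le w} a^X_{w,v}\de_v$ with leading term $a^X_{w,w}=(-1)^{\ell(w)}\tfrac{1}{c_w}$, and crucially this leading coefficient depends only on $w$, not on the chosen reduced word $I_w$; the same holds for the $\de_{v'}$-coefficients with $\ell(v')=\ell(w)$, since any $v'\le w$ of length $\ell(w)$ must equal $w$. Therefore $D=X_{I_w}-X_{I_w'}$ is supported, in the $\de$-basis, on elements $v$ with $v\le w$ and $\ell(v)\le \ell(w)-1$. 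Feeding this back through the second half of Lemma~\ref{lem:deltatoX} (expressing $\de_v$ in terms of the $X_{I_u}$ with $u\le v$), one gets that in the expansion $D=\sum_v p_{w,I_v}X_{I_v}$ only $v$ with $\ell(v)\le \ell(w)-1$ can occur, and in fact only $v<w$ (every $v$ appearing satisfies $v\le$ some $\de$-index $\le w$). This already gives the Bruhat constraint $v<w$ and $\ell(v)\le \ell(w)-1$.

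To upgrade $\ell(v)\le\ell(w)-1$ to $\ell(v)\le\ell(w)-2$, I would use the parity argument, exactly as in the finite case (cf. \cite[Lemma~7.1]{CZZ1}): the two reduced words $I_w$ and $I_w'$ have the same length $\ell(w)$, hence the same parity, so $X_{I_w}$ and $X_{I_w'}$ — and therefore their difference $D$ — lie in the span of $\{X_{I_v} : \ell(v)\equiv \ell(w)\bmod 2\}$. This can be seen by tracking the $\mathbb{Z}/2$-grading on $Q_W$ under which $\de_\al$ is odd and $\tfrac1{x_\al}$ is odd, so that each $X_\al=\tfrac1{x_\al}(1-\de_\al)$ is homogeneous of even degree; equivalently, invoke that the coefficient $a^X_{w,v}$ of $\de_v$ in $X_{I_w}$ vanishes unless $\ell(v)\equiv\ell(w)\bmod 2$, which one checks by the same induction that proves Lemma~\ref{lem:deltatoX}. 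Combined with $\ell(v)\le\ell(w)-1$, parity forces $\ell(v)\le\ell(w)-2$, completing the proof.

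The main obstacle is not conceptual but a matter of setting up the right bookkeeping: one must argue cleanly that the top-length part of $X_{I_w}$ in the $\de$-basis is independent of the reduced word (which is immediate since the only $v\le w$ with $\ell(v)=\ell(w)$ is $w$ itself) and that translating between the $\de$-basis and the $\{X_{I_v}\}$-basis via Lemma~\ref{lem:deltatoX} does not create longer support — this follows because both change-of-basis matrices are triangular for the Bruhat order. The parity step is where one genuinely needs the $\mathbb{Z}/2$-grading rather than just triangularity; everything else is formal.
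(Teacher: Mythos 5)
Your first step---triangularity of the change of basis---is correct: $D:=X_{I_w}-X_{I_{w}'}\in\DcF$, the structure theorem gives a unique expansion $D=\sum_v p_{w,I_v}X_{I_v}$ with $p_{w,I_v}\in S$, the top coefficient $(-1)^{\ell(w)}/c_w$ of $\de_w$ is independent of the reduced word, and chasing this through Lemma~\ref{lem:deltatoX} yields $p_{w,I_v}=0$ unless $v<w$ and $\ell(v)\le\ell(w)-1$.

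The parity step, however, is wrong, and it is precisely the step that does the remaining work. The $\mathbb{Z}/2$-grading you posit does not exist on $Q_W$: with $\de_\al$ odd and $\tfrac1{x_\al}$ odd, the two terms of $X_\al=\tfrac1{x_\al}\de_e-\tfrac1{x_\al}\de_\al$ have opposite parities, so $X_\al$ is not homogeneous. (Nor is there a $\mathbb{Z}/2$-grading on $S$ with each $x_\al$ odd once $F\neq F_a$, since $x_{\la+\mu}=x_\la+x_\mu+\text{higher order terms}$ mixes parities.) Concretely, the claim that $a^X_{w,v}=0$ unless $\ell(v)\equiv\ell(w)\pmod 2$ fails already for $w=s_i$: the element $X_i=\tfrac1{x_i}\de_e-\tfrac1{x_i}\de_{s_i}$ has nonzero coefficients at $\de_e$ (length $0$) and at $\de_{s_i}$ (length $1$). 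More generally, expanding $X_{I_w}=\prod_k\bigl(\tfrac1{x_{i_k}}-\tfrac1{x_{i_k}}\de_{i_k}\bigr)$ always produces a nonzero $\de_e$-term, so both parities occur in the $\de$-support as soon as $\ell(w)\ge 1$.

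The missing ingredient is that the \emph{subleading} $\de$-coefficients of $X_{I_w}$ are also independent of the reduced word. Fix $v<w$ with $\ell(v)=\ell(w)-1$. In the expansion $X_{I_w}=\prod_{k=1}^{l}\bigl(\tfrac1{x_{i_k}}-\tfrac1{x_{i_k}}\de_{i_k}\bigr)$, the only contributions to $\de_v$ come from subsets $E\subseteq[l]$ with $\prod_{k\in E}s_{i_k}=v$; since $\ell(v)=l-1$ this forces $|E|=l-1$, and by the Exchange Condition there is a \emph{unique} index $j$ with $E=[l]\setminus\{j\}$ and $s_{i_1}\cdots\widehat{s_{i_j}}\cdots s_{i_l}$ reduced. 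Computing the corresponding coefficient one finds
\[
a^X_{w,v}\;=\;\frac{(-1)^{\ell(w)-1}}{c_v\,x_\al},\qquad \al:=s_{i_1}\cdots s_{i_{j-1}}(\al_{i_j})\in\proot,
\]
and $\al$ is the unique positive root with $w=s_\al v$, hence depends only on the pair $(w,v)$ and not on $I_w$. Therefore the $\de_v$-coefficients with $\ell(v)=\ell(w)-1$ cancel in $D$, the $\de$-support of $D$ drops to $\{u<w:\ell(u)\le\ell(w)-2\}$, and the triangular change of basis then gives $p_{w,I_v}=0$ unless $\ell(v)\le\ell(w)-2$.
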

In particular, if $(s_is_j)^{m_{ij}}=1$ and $w=\underbrace{s_is_js_i\cdots }_{m_{ij} \text{ times }}$, then
\[
\underbrace{X_{i}X_jX_i\cdots}_{m_{ij} \text{ times}}-\underbrace{X_jX_iX_j\cdots}_{m_{ij} \text{ times}}=\sum_{v<w,\; \ell(v)\le \ell(w)-2}p_vX_{I_v}, ~p_v\in S.
\]
Observe that if $F=F_a$ or $F_m$, then all $p_v$ vanish (\cite[Proposition~4.2]{KK86} and \cite[Proposition~2.4]{KK90}) and we obtain the usual braid relation.

We then identify $\DcF$ with a subring of $R$-linear operators on $S$, that is

\begin{cor} \label{cor:injDem}The map $\DcF\to \End_R(S)$, $X_i\mapsto \Delta_i$ is injective.
\end{cor}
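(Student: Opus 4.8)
The plan is to deduce injectivity of the representation $\DcF \to \End_R(S)$ directly from the structure theorem. Recall that the action of $Q_W$ on $Q$ is defined by $q\de_w \cdot q' = q\,w(q')$, and that under this action $X_i$ acts as the Demazure operator $\Dem_i$ on $S$. So the map in question is nothing but the restriction to $\DcF \subseteq Q_W$ of the action homomorphism $Q_W \to \End_R(Q)$, followed by restriction of operators to the invariant submodule $S$. Thus it suffices to show that the composite $\DcF \hookrightarrow Q_W \to \End_R(Q)$ is injective, and moreover that an element of $\DcF$ acting trivially on $S$ already acts trivially on $Q$; in fact I would show the stronger and cleaner statement that the $Q_W$-action on $Q$ is faithful.

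The key step is the faithfulness of the $Q_W$-action on $Q$. Write $z = \sum_{w\in W} q_w \de_w$ with $q_w \in Q$, using the canonical basis $\{\de_w\}$ (Corollary~\ref{cor:basisQW}), and suppose $z \cdot q' = 0$ for all $q' \in Q$, i.e. $\sum_w q_w\, w(q') = 0$ for all $q' \in Q$. Since only finitely many $q_w$ are nonzero and distinct elements of $W$ act by distinct automorphisms of $Q$ (note $Q$ contains $S = \RcF$, and if $w \neq w'$ then $w^{-1}w' \neq e$ acts nontrivially on $\cl \subset S$, hence on $Q$), the standard Dedekind/Artin independence-of-characters argument shows all $q_w = 0$, so $z = 0$. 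Therefore $Q_W \to \End_R(Q)$ is injective, and a fortiori its restriction to $\DcF$ is injective.

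It remains to pass from operators on $Q$ to operators on $S$. Here I would invoke Theorem~\ref{thm:mainDem}: an element $z \in \DcF$ satisfies $z \cdot S \subseteq S$ by construction, so the operator $z|_S \in \End_R(S)$ is well-defined and is the image of $z$ under the map in the statement. If $z|_S = 0$, then in particular $z \cdot x_\la = 0$ for all $\la \in \cl$. To conclude $z = 0$ as an operator on $Q$ — equivalently $z \cdot q' = 0$ for all $q' \in Q$ — I would argue that $z$ is determined by its restriction to $S$: every $q' \in Q$ is of the form $s/c$ with $s \in S$ and $c$ a product of $x_\al$, $\al \in \proot$; writing $z = \sum_w q_w \de_w$, the equations $z \cdot s = 0$ for all $s \in S$ already force all $q_w = 0$ by the same independence-of-characters argument applied over the fraction field of $Q$ (the $w(s)$, $s \in S$, span a $Q$-subspace on which the $\de_w$ are independent), hence $z = 0$ in $Q_W \subseteq \End_R(Q)$, and in particular $z$ is the zero operator on $S$ as an element of $\DcF$.

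The main obstacle is the bookkeeping in the last paragraph: one must be careful that the Dedekind independence argument applies with coefficients in the possibly-non-reduced, non-domain ring $Q$ (though it is built from $S = \RcF$, which when $R$ is a domain of characteristic $0$ embeds into the domain $S_K$ of the previous section, so one can clear this up by base change to $K$ and using the injection $S \hookrightarrow S_K$). Alternatively, and perhaps more cleanly, one avoids this entirely: since $\{X_{I_w}\}_{w\in W}$ is a basis of $Q_W$ as a left $Q$-module and also a basis of $\DcF$ as a left $S$-module (the corollary just proved), if $z = \sum_w p_w X_{I_w} \in \DcF$ with $p_w \in S$ acts as $0$ on $S$, one evaluates $z$ on a well-chosen finite set of elements of $S$ (for instance products of the $x_\la$) and uses the triangularity of $X_{I_w}$ with respect to the $\de_w$ basis from Lemma~\ref{lem:deltatoX} to solve recursively for the $p_w$, showing each vanishes — this is exactly the argument used in the finite case and transfers verbatim once Theorem~\ref{thm:mainDem} is available.
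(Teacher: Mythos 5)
Your proposal is correct but takes a genuinely different route from the paper. The paper deduces injectivity as an application of its key divisibility result, Proposition~\ref{lem:keyalpha}: if $z=\sum p_w X_{I_w}$ kills $S$, then trivially $z\cdot S\subseteq x_\al S$ for every positive root $\al$, so $x_\al\mid p_w$; iterating gives $x_\al^n\mid p_w$ for all $n$, and since $S$ is a formal power series ring this forces $p_w=0$. Your proof instead invokes Dedekind--Artin independence of the distinct automorphisms $w\in W$ of $S$, with coefficients in the domain $Q$, to conclude that the action map $Q_W\to\End_R(S)$ (not just its restriction to $\DcF$) is already injective. This is sound: $S\cong R\lbr x_1,\dots,x_m\rbr$ is a domain when $R$ is (char.\ $0$), $Q$ is a localization hence also a domain, $W$ acts faithfully on $\cl\supseteq\rl$, and the standard minimal-counterexample argument for independence of automorphisms goes through verbatim with $s$ ranging over $S$ and coefficients in $Q$.

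Two small remarks. First, your appeal to Theorem~\ref{thm:mainDem} is actually unnecessary: the fact that $\DcF$ acts on $S$ is the \emph{easy} inclusion ($\DcF\subseteq\{z:z\cdot S\subseteq S\}$), true by construction, and your Dedekind argument needs nothing more. So your proof is in fact more elementary than the paper's, bypassing both the structure theorem and the general-position machinery of Section~3 entirely, and proves the stronger statement that all of $Q_W$ embeds in $\End_R(S)$. The trade-off is that the paper's version reuses the already-proved Proposition~\ref{lem:keyalpha}, so it is shorter to state in context. Second, you do not really need to pass to the fraction field or worry about whether $Q$ is a domain via $S\hookrightarrow S_K$; $Q$ is a localization of the power-series domain $S$ and is therefore a domain directly, which makes the independence argument clean. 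The alternative sketch in your last paragraph (solving recursively for $p_w$ via triangularity) is vaguer and closer in spirit to, but still not identical with, what the paper does; the Dedekind argument is the cleaner of your two suggestions.
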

\begin{proof}Let $z\in \DcF$ be in the kernel. Write $z=\sum_{\ell(w)\le k}p_wX_{I_w}$ with $p_w\in S$. If $z\cdot S=0$, then  $z\cdot S\subseteq x_\al S$. So by Lemma~\ref{lem:keyalpha}, $x_\al\mid p_w$ for any $\al$ and any $w$. Then $z':=\frac{1}{x_\al}z$ satisfies the same condition as $z$ does, so $x_\al\mid \frac{p_w}{x_\al}$. Recursively, we see that $p_w=0$ for all $w$.
\end{proof}

\begin{example}\label{prop:addcompareDem} Consider the additive case, i.e. $F=F_a$.
In this case,  $X_{I_w}$ does not depend on the choice of the reduced decomposition $I_w$ of $w$, so we denote it simply by $X_w$. Let $Q_{a}=S^*_R(\cl)[\frac{1}{\al}|\al\in \Phi_+]$ and $Q_{a,W}=Q_a\rtimes R[W]$ with basis $\{\de_w^a\}_{w\in W}$. The map $S^*_R(\cl)\to R\lbr\cl\rbr_{F_a}$ induces a map between twisted group algebras and, hence, an embedding \[\nila\hookrightarrow \DcFa\text{ with  }x_w\mapsto (-1)^{\ell(w)}X_w,\] where $\nila$ is the nil (affine) Hecke algebra with basis $\{x_w\}_{w\in W}$ of \cite[\S4]{KK86}
and $\DcFa$ denotes $\mathbf{D}_{F_a}$. Moreover, we have
\[R\lbr \cl \rbr_{F_a}\otimes_{S^*_R(\cl)}\nila\simeq\DcFa\quad \text{ as }R\lbr \cl \rbr_{F_a}\text{-modules.}\]
Similar to Lemma \ref{lem:deltatoX}, we obtain 
\[
x_w=\sum_{v\le w}a_{w,v}\de_v, ~ \de_w=\sum_{v\le e}b_{w,v}x_v,~\text{with }  b_{w,v}\in S^*_R(\cl), ~ a_{w,v}\in Q_a.
\]
The coefficient $a_{w,v}=(-1)^{\ell(w)}a_{w,v}^X$ corresponds to $c_{w^{-1}, v^{-1}}$ in \cite[(4.3)]{KK86}.
\end{example}

We now turn to the study of the dual of the algebra $\DcF$. As in \cite{CZZ1} we, first,
introduce a coproduct structure on $Q_W$. We view $Q_W\otimes_QQ_W$ as the tensor product of left $Q$-modules, and define
\[
\Tr: Q_W\to Q_W\otimes_QQ_W, ~q\de_w\mapsto q\de_w\otimes \de_w.
\]
The counit is $\ep\colon Q_W\to Q$, $q\de_w\mapsto q$. We define a product structure on $Q_W\otimes_QQ_W$ by 
\[
(z_1\otimes z_2)\cdot (z_1'\otimes z_2')=z_1z_1'\otimes z_2z_2'.
\]
By \cite[Proposition 8.10]{CZZ1} $\Tr$ is a ring homomorphism, so $Q_W$ is a cocommutative coalgebra in the category of left $Q$-modules.
\begin{rem}\ We can also view the counit as $\ep\colon Q_W\to Q$, $z\mapsto z\cdot 1$ where $\cdot$ is the action of $Q_W$ on $Q$. In particular, it implies that $\ep(zq)=z\cdot q$ for $z\in Q_W$ and $q\in Q$. 
\end{rem}

Let $Q_W^*$ be the dual of $Q_W$, i.e., $Q_W^*=\Hom_Q(Q_W,Q)$. By definition, it is a $Q$-module by 
\[
(q f)(z)=qf(z), \quad q\in Q, z\in Q_W, f\in Q_W^*.
\]
Indeed, $Q_W^*$ is equal to the left $Q$-module $\Hom(W,Q)$, the set of maps (of sets) from $W$ to $Q$. Define $\{f_v\}_{v\in W}\subset Q_W^*$ by $f_w(\de_v)=\de_{v,w}$ (the Kronecker symbol), then each $f\in Q_W^*$ can be written as $f=\sum'_{w\in W}q_wf_w$ with $q_w=f(\de_w)$, where $\sum'$ denotes a possibly infinite (formal) sum. Since $\{X_{I_w}\}_{w\in W}$ is another basis of $Q_W$, we could define $\{X_{I_w}^*\}_{w\in W}$ such that $X_{I_w}^*(X_{I_v})=\de_{w,v}$. Then any $f\in Q_W^*$ can be  written as $f=\sum'_{w\in W}f(X_{I_w})X_{I_w}^*$.
\begin{lem} \cite[Proposition 9.5]{CZZ1} For any sequence $I=(i_1,...,i_k)$, we have 
\[
\TR(X_I)=\sum_{E_1,E_2\subseteq [k]}p^I_{E_1,E_2}X_{I_{E_1}}\otimes X_{I_{E_2}}, \quad \text{where } p_{E_1,E_2}^I \in Q.
\]
\end{lem}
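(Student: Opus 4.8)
The plan is to induct on the length $k$ of the sequence $I=(i_1,\ldots,i_k)$, using the fact that $\TR$ is a ring homomorphism (cited above from \cite[Proposition~8.10]{CZZ1}) together with the product formula $\Dem_I(uv)=\sum_{E_1,E_2\subseteq[l]}p^I_{E_1,E_2}\Dem_{I|_{E_1}}(u)\Dem_{I|_{E_2}}(v)$ that already appears in the Preliminaries (from \cite[Lemma~4.8]{CZZ1}), with the same coefficients $p^I_{E_1,E_2}\in\IF^{|E_1|+|E_2|-l}\subseteq Q$. The base case $k=0$ is $\TR(1)=1\otimes 1$, which is immediate from the definition $\TR(q\de_w)=q\de_w\otimes\de_w$ applied to $\de_e=1$.

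For the inductive step, write $I=(i_1,J)$ with $J=(i_2,\ldots,i_k)$, so that $X_I=X_{i_1}X_J$ and hence, since $\TR$ is multiplicative, $\TR(X_I)=\TR(X_{i_1})\cdot\TR(X_J)$. First I would compute $\TR(X_{i_1})$ directly: from $X_{i_1}=\tfrac1{x_{i_1}}(1-\de_{i_1})=\tfrac1{x_{i_1}}\de_e-\tfrac1{x_{i_1}}\de_{i_1}$ and the definition of $\TR$, one gets $\TR(X_{i_1})=\tfrac1{x_{i_1}}\de_e\otimes\de_e-\tfrac1{x_{i_1}}\de_{i_1}\otimes\de_{i_1}$, which after re-expressing $\de_e=1$ and $\de_{i_1}=1-x_{i_1}X_{i_1}$ in terms of the Demazure elements becomes a $Q$-linear combination of $1\otimes1$, $1\otimes X_{i_1}$, $X_{i_1}\otimes1$, $X_{i_1}\otimes X_{i_1}$; this is exactly the claimed formula for the one-term sequence $(i_1)$, with coefficients the $p^{(i_1)}_{E_1,E_2}\in Q$. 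Then I would apply the induction hypothesis to $\TR(X_J)=\sum_{E_1',E_2'\subseteq\{2,\ldots,k\}}p^J_{E_1',E_2'}X_{J|_{E_1'}}\otimes X_{J|_{E_2'}}$, multiply the two tensors using the product rule $(z_1\otimes z_2)(z_1'\otimes z_2')=z_1z_1'\otimes z_2z_2'$, and collect terms. Each product $X_{(i_1)|_{E}}\,X_{J|_{E'}}$ is again a word in the $X_j$'s indexed by a subset of $[k]$, up to the relations of Corollary~\ref{cor:present}; regrouping by the resulting subsets $E_1,E_2\subseteq[k]$ gives coefficients in $Q$, which is all the statement asserts. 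Alternatively, and perhaps more cleanly, one can avoid recomputing and simply match this with the convolution identity: the coefficients $p^I_{E_1,E_2}$ are \emph{defined} by the product expansion of $\Dem_I$, and the pairing between $Q_W$ and $Q_W^*$ intertwines $\TR$ with multiplication of operators on $S$, so evaluating $\TR(X_I)$ against $X^*_{I_{v_1}}\otimes X^*_{I_{v_2}}$ and using Corollary~\ref{cor:injDem} reduces the claim to the already-established operator identity.

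The main obstacle is bookkeeping rather than conceptual: one must be careful that the coefficients produced by the inductive multiplication genuinely coincide with the $p^I_{E_1,E_2}$ coming from the $\Dem_I$-expansion, and that rewriting products of $X$-words into the standard basis $\{X_{I_w}\}$ (which requires Corollary~\ref{cor:present}) keeps all coefficients in $Q$ and respects the index-set grading $|E_1|+|E_2|-k$. Since every ingredient — multiplicativity of $\TR$, the basis property of $\{X_{I_w}\}$, the relations among reduced words, and the $\Dem_I$ product formula — is available verbatim from the finite-case references and the earlier parts of this paper, the proof is identical to \cite[Proposition~9.5]{CZZ1}, and I would simply say so after indicating the induction.
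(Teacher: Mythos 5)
The paper does not prove this lemma; it is quoted directly from \cite[Prop.~9.5]{CZZ1}, so your task was to reconstruct that proof, and the inductive plan via multiplicativity of $\TR$ and direct computation of $\TR(X_{i_1})$ is the right one and matches the cited reference in spirit. Two points should be tightened. First, the appeal to Corollary~\ref{cor:present} is misplaced: since $E\subseteq\{1\}$ and $E'\subseteq\{2,\ldots,k\}$ are disjoint and in increasing order, the product $X_{(i_1)|_E}X_{J|_{E'}}$ equals $X_{I|_{E\cup E'}}$ on the nose, and the lemma asks only for such subword-indexed elements, not a rewriting into the reduced-word basis $\{X_{I_w}\}$. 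Second, the actual computational step of the induction, which you leave implicit under the heading ``bookkeeping,'' is commuting the $Q$-coefficients $p^J_{E_1',E_2'}$ past $X_{i_1}$ in the first tensor slot using $X_{i_1}q=s_{i_1}(q)X_{i_1}+\Dem_{i_1}(q)$; this is precisely what produces the extra terms (with the first-slot subset $E_1'$ in place of $\{1\}\cup E_1'$) and keeps all coefficients in $Q$. With those two points made explicit, the sketch is complete.
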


\begin{lem}\cite[Theorem 9.2]{CZZ1} The coproduct on $Q_W$ induces a coproduct structure on $\DcF$, which makes it into a cocommutative coalgebra with counit $\ep\colon \DcF\to S$ induced by $\ep\colon Q_W\to Q$. Moreover, $\ep(X_{I_w})=\de_{e,w}$.
\end{lem}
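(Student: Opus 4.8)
The plan is to show two things: first, that the coproduct $\TR$ on $Q_W$ restricts to a map $\DcF \to \DcF \otimes_S \DcF$, and second, that the counit $\ep$ restricts to a map $\DcF \to S$ with the stated value on Bott--Samelson generators. The key tool is the structure theorem (Theorem~\ref{thm:mainDem}), which identifies $\DcF$ with $\{z\in Q_W\mid z\cdot S\subseteq S\}$, together with the analogous characterization of $\DcF\otimes_S\DcF$ as a subset of $Q_W\otimes_Q Q_W$.

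First I would handle the counit, which is the easy part. Since $\ep(z) = z\cdot 1$ (by the Remark following the definition of $\Tr$), for $z\in\DcF$ we have $\ep(z) = z\cdot 1 \in S$ because $\DcF\cdot S\subseteq S$; thus $\ep$ restricts to $\DcF\to S$. The value $\ep(X_{I_w}) = \de_{e,w}$ follows from $\ep(X_i) = X_i\cdot 1 = \Dem_i(1) = 0$ together with multiplicativity: $\ep$ is a ring homomorphism on $Q_W$ (it is the composite of $\TR$ with one of the two projections, or one checks directly $q\de_w\cdot q'\de_{w'}\mapsto qw(q')$ versus $qq'$ — actually $\ep$ is the algebra map $Q_W\to Q$ sending $\de_w\mapsto 1$), so $\ep(X_{I_w})$ is a product of terms $\ep(X_{i_j})$ when $\ell(w)\ge 1$, hence vanishes, while $\ep(X_\emptyset) = \ep(1) = 1$. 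One must be slightly careful that $\ep$ kills the lower-order correction terms too, but those are themselves products $\DcF$-elements times $X_{I_v}$ with $v\neq e$, so the same argument applies; alternatively just use $\ep(X_{I_w}) = X_{I_w}\cdot 1 = \Dem_{I_w}(1)$ and note $\Dem_i(1)=0$.

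The main work is the coproduct. I would argue as follows. The algebra $\DcF$ is generated over $S$ by the elements $X_i$, $i\in[n]$ (as noted after the definition of $\DcF$). For $s\in S$ we have $\TR(s) = s\otimes 1$, which lies in $\DcF\otimes_S\DcF$ since $S\subseteq\DcF$. By the computation $\TR(X_i) = \sum_{E_1,E_2\subseteq\{1\}} p^{(i)}_{E_1,E_2} X_{I_{E_1}}\otimes X_{I_{E_2}}$ from the cited Lemma, the element $\TR(X_i)$ is an $S\otimes_S S$-combination of the finitely many tensors $1\otimes 1$, $X_i\otimes 1$, $1\otimes X_i$, $X_i\otimes X_i$ — provided the coefficients $p^{(i)}_{E_1,E_2}$ that appear actually lie in $S$ and not merely in $Q$. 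This is the crux: one needs $\TR(X_i)\in \DcF\otimes_S\DcF$. Rather than tracking coefficients by hand, I would invoke the structure theorem in product form. Consider the action of $Q_W\otimes_Q Q_W$ on $Q\otimes_R Q$ (componentwise). One checks $(\DcF\otimes_S\DcF)\cdot(S\otimes_R S)\subseteq S\otimes_R S$, and conversely — this is the content of an analogue of Theorem~\ref{thm:mainDem} for the tensor square, which should follow by the same prime-ideal/general-position machinery of Section~\ref{sec:genpos} applied coordinatewise, or be reducible to Theorem~\ref{thm:mainDem} itself via a flatness argument over $Q$. Granting that, it suffices to verify $\TR(X_i)\cdot(S\otimes_R S)\subseteq S\otimes_R S$. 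For $u\otimes v\in S\otimes_R S$, using the compatibility of $\TR$ with the action (namely $(\TR z)\cdot(u\otimes v)$ computes the "coaction'' version of $z\cdot(uv)$), one reduces to the Leibniz-type identity $\Dem_i(uv) = \Dem_i(u)v + s_i(u)\Dem_i(v)$, all of whose terms lie in $S$. Hence $\TR(X_i)\in\DcF\otimes_S\DcF$.

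Finally, since $\DcF$ is generated as a ring by $S$ and the $X_i$, and $\TR$ is a ring homomorphism (by \cite[Proposition~8.10]{CZZ1}) with $\TR(S)\subseteq\DcF\otimes_S\DcF$ and $\TR(X_i)\in\DcF\otimes_S\DcF$, it follows that $\TR(\DcF)\subseteq\DcF\otimes_S\DcF$. Cocommutativity and coassociativity are inherited from $Q_W$, and the counit axioms are inherited as well once we know $\ep(\DcF)\subseteq S$. The hardest step, and the one I would spend the most care on, is establishing the product form of the structure theorem (or otherwise showing directly that the coefficients $p^I_{E_1,E_2}$ appearing in $\TR(X_I)$ lie in $S$); everything else is formal manipulation or a direct appeal to the already-proven finite-case arguments in \cite{CZZ1}.
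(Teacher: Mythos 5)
The paper offers no proof of this lemma; it is quoted verbatim from \cite[Theorem~9.2]{CZZ1} with the understanding that the proof in the finite case carries over word for word once Theorem~\ref{thm:mainDem} is in place. So there is no ``paper's own proof'' to compare against beyond the citation, and your proposal must be judged against the argument in \cite{CZZ1}.

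Your treatment of the counit is correct and matches the standard argument: $\aug(z)=z\cdot 1$, so $\aug(\DcF)\subseteq S$ follows from $\DcF\cdot S\subseteq S$, and $\aug(X_{I_w})=\Dem_{I_w}(1)=\de_{e,w}$ since $\Dem_i(1)=0$. The problem is in the coproduct step. You propose to characterize $\DcF\otimes_S\DcF$ inside $Q_W\otimes_Q Q_W$ as the set of elements sending $S\otimes_R S$ into $S\otimes_R S$ under a ``componentwise'' action of $Q_W\otimes_Q Q_W$ on $Q\otimes_R Q$. That action is not well defined: $Q$ is not central in $Q_W$ (one has $\de_w q=w(q)\de_w$), and for $a,b\in Q_W$, $q\in Q$, $u,v\in Q$ the candidate formula would require $q(a\cdot u)\otimes(b\cdot v)=(a\cdot u)\otimes q(b\cdot v)$ inside $Q\otimes_R Q$, which is the relation for $\otimes_Q$, not $\otimes_R$. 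So the map $Q_W\times Q_W\to\Hom(Q\otimes_R Q,Q\otimes_R Q)$ does not descend through $\otimes_Q$, and the claimed ``product form of the structure theorem'' has no statement to prove. This is the crux and it fails as written; the flatness-over-$Q$ variant you mention runs into the same obstruction.

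The route that actually works is the one you mention only as a fallback: show directly that $\Tr(X_i)\in\DcF\otimes_S\DcF$. This is an explicit computation. Writing $X_i=\tfrac{1}{x_i}(1-\de_i)$, $\de_i=1-x_iX_i$, and using the relation $(qa)\otimes b=a\otimes(qb)$ in $Q_W\otimes_Q Q_W$, one finds
\[
\Tr(X_i)=X_i\otimes 1+1\otimes X_i-x_i\,(X_i\otimes X_i),
\]
whose coefficients lie in $S$. Since $\Tr$ is a ring homomorphism (\cite[Proposition~8.10]{CZZ1}), $\DcF$ is generated by $S$ and the $X_i$, $\Tr(s)=s\otimes 1$ for $s\in S$, and $\DcF\otimes_S\DcF$ is closed under the product of $Q_W\otimes_Q Q_W$ (using that $\DcF$ is stable under right multiplication by $S$), it follows that $\Tr(\DcF)\subseteq\DcF\otimes_S\DcF$. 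This is essentially what \cite{CZZ1} does, with the coefficient control supplied uniformly by the generalized Leibniz rule for $\Dem_I$ (the $p^I_{E_1,E_2}\in\IF^{|E_1|+|E_2|-|I|}\subseteq S$ quoted after \cite[Lemma~4.8]{CZZ1}). You should replace the tensor-square structure theorem by this direct calculation.
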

Consider the dual $\DcFd=\Hom_S(\DcF, S)$ where $\DcF$ is considered as a left $S$-module. Then the coproduct structure on $\DcF$ makes $\DcFd$ into a commutative ring with unit $\unit=\ep\in \DcFd$. The set of elements $\{X^*_{I_w}\}_{w\in W}$ can also be viewed as in $\DcFd$, and each $f\in \DcFd$ can be uniquely written as $f=\sum'_{w\in W}q_wX^*_{I_w}$ with $q_w\in S$. Moreover, as in \cite[Lemma 10.2]{CZZ2}, we have 
\begin{lem}The algebra $\DcFd$ can be identified with the subset 
\[
\{f\in Q_W^*\mid f(\DcF)\subseteq S\}.
\]
In particular, $\unit=X^*_{I_e}\in \DcFd$.
\end{lem}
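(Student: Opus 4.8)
The plan is to identify $\DcFd=\Hom_S(\DcF,S)$ with its image under the natural restriction-extension maps relating it to $Q_W^*=\Hom_Q(Q_W,Q)$. First I would set up the obvious $S$-linear map $\DcFd\to Q_W^*$: given $f\in\DcFd$, extend it $Q$-linearly along the basis $\{X_{I_w}\}_{w\in W}$, which is a basis of $\DcF$ as a left $S$-module (by the corollary following Theorem~\ref{thm:mainDem}) and simultaneously a basis of $Q_W$ as a left $Q$-module (Corollary~\ref{cor:basisQW}); concretely, if $f=\sum'_w q_w X^*_{I_w}$ with $q_w\in S$, the same formal sum defines an element of $Q_W^*$ which sends $\DcF$ into $S$. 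This map is injective because the $X^*_{I_w}$ are $S$-linearly (even formally) independent, and its image plainly lies in $\{f\in Q_W^*\mid f(\DcF)\subseteq S\}$.

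Next I would prove surjectivity onto that subset. Take $g\in Q_W^*$ with $g(\DcF)\subseteq S$. Write $g=\sum'_{w\in W} q_w X^*_{I_w}$ with $q_w=g(X_{I_w})\in Q$; the hypothesis $g(\DcF)\subseteq S$ forces $q_w=g(X_{I_w})\in S$ for every $w$, since each $X_{I_w}\in\DcF$. Hence $g$ is the image of the element $\sum'_w q_w X^*_{I_w}$ of $\DcFd$, and restriction of $g$ to $\DcF$ is exactly that element of $\DcFd=\Hom_S(\DcF,S)$. The only subtlety is that $g$ is an $a$ priori $Q$-linear functional on all of $Q_W$ and we must check its restriction to $\DcF$ is $S$-linear and lands in $S$ — but $S$-linearity is immediate from $Q$-linearity, and landing in $S$ is precisely the hypothesis. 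Thus the two sets coincide.

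Finally, for the ``in particular'' clause: by the lemma of \cite[Theorem 9.2]{CZZ1} quoted above we have $\ep(X_{I_w})=\de_{e,w}$, so under the identification above the counit $\ep$, which is the unit $\unit\in\DcFd$, corresponds to the functional $X_{I_w}\mapsto\de_{e,w}$, i.e.\ to $X^*_{I_e}$. Hence $\unit=X^*_{I_e}$.

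The main obstacle — really the only point requiring care rather than bookkeeping — is verifying that the formal infinite sums $\sum'_w q_w X^*_{I_w}$ are well-defined as elements of $Q_W^*$ and that passing between the $\{X^*_{I_w}\}$ and $\{f_w\}$ descriptions is legitimate; this is where one invokes that $\{X_{I_w}\}$ is a genuine $S$-module (resp. $Q$-module) basis so that evaluation $f\mapsto(f(X_{I_w}))_{w}$ gives a bijection onto all $W$-indexed families, exactly as in \cite[Lemma 10.2]{CZZ2}. Everything else is a direct transcription of the finite-case argument, now licensed by Theorem~\ref{thm:mainDem} and its corollary on the $S$-module basis of $\DcF$.
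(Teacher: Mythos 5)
Your proof is correct and follows the same route the paper takes (the paper defers the first part to the proof of \cite[Lemma 9.2]{CZZ2}, which is exactly this basis-comparison argument): you use the fact that $\{X_{I_w}\}_{w\in W}$ is simultaneously a left $S$-module basis of $\DcF$ and a left $Q$-module basis of $Q_W$ to realize both $\DcFd$ and $Q_W^*$ as formal $W$-indexed sums $\sum'_w q_w X^*_{I_w}$ with $q_w\in S$ (respectively $q_w\in Q$), and then observe that the condition $f(\DcF)\subseteq S$ is equivalent to $q_w=f(X_{I_w})\in S$ for all $w$. The ``in particular'' clause is handled exactly as in the paper, via $\ep(X_{I_w})=\de_{e,w}$.
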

\begin{proof} The first part follows as in the proof of \cite[Lemma 9.2]{CZZ2}. To prove the second part, we have  $\ep(X_{I_w})=X_{I_w}\cdot 1=\de_{w,e}\in S$, so $\unit(X_{I_w}^*)=\de_{w,e}$. That is, $\unit=X^*_{I_e}$.
\end{proof}

\begin{rem}In the case $F=F_a$, if $f$, $g\in Q_W^*$ are such that $f(X_{I_w})=0$ and $g(X_{I_w})=0$ for all but finitely many $w\in W$, then $fg(X_{I_w})=0$ for all but finitely many $w\in W$. This implies that the dual of the nil Hecke algebra $\nila$ (see Example~\ref{prop:addcompareDem}) has a product structure.

For a general $F$ it is unknown whether such finiteness condition holds. Consequently, if one restricts to the $S$-submodule of $\DcFd$ consisting of $f$ such that $f(X_{I_w})=0$ for all but finite many of $w\in W$, then this submodule maybe not be a ring. This is one of the reasons why the definition of $\Psi$  in \cite[Definition 2.19]{KK90}  (the case $F=F_m$) does not include the finiteness condition.
\end{rem}

\section{Filtrations on the Demazure algebra and its dual}
In this section we introduce and study filtrations on $\DcF$ and its dual $\DcFd$.
The filtration on $\DcFd$ come from the filtration by the codimension of Schubert basis
and generalizes the one for $\nila^*$ of \cite[Definition 2.28]{KK90}. 
Our main result (Theorem~\ref{thm:grnil}) generalizes \cite[Proposition 2.30]{KK90}.

Set $\DcF^{(i)}$ to be the $R$-submodule of $\DcF$ generated by $qX_{I}$ where $q\in S$ and $\deg q-|I|\ge i$. It gives a filtration \[
\cdots  \DcF^{(-i)}\supsetneq \cdots \supsetneq \DcF^{(0)}\supsetneq \cdots \supsetneq \DcF^{(i)}\supsetneq \cdots.
\]
As in \cite[Proposition 3.6]{CPZ} we conclude that $\DcF^{(i)}\cdot \IF^j\subset \IF^{i+j}$. Moreover,  since $\{X_{I_w}\}_{w\in W}$ is a basis of $\DcF$,  the submodule $\DcF^{(i)}$ is spanned by $qX_{I_w}$ with $q\in S$ and $\deg q-\ell(w)\ge i$. 
\begin{lem}\label{lem:filprod}
\begin{enumerate}
\item For any $q\in S$ and a sequence $I$, we have \[X_Iq=\sum_{E\subseteq I}\phi_{I,E}(q)X_E,\] where $\phi_{I,E}(q)\in S$ has degree $\deg q-|I|+|E|$.
\item For any $i$, $j$  we have $\DcF^{(i)}\cdot \DcF^{(j)}\subseteq \DcF^{(i+j)}$.
\end{enumerate}
\end{lem}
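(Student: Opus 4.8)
The plan is to prove (1) first by induction on the length of the sequence $I$, and then deduce (2) as a formal consequence. For part (1), the base case $I=\emptyset$ is trivial since $X_\emptyset q = q = \phi_{\emptyset,\emptyset}(q)X_\emptyset$ with $\deg q$ unchanged. For the inductive step, write $I=(i_1,I')$ and use the defining commutation relation $X_{i_1}q' = s_{i_1}(q')X_{i_1} + \Dem_{i_1}(q')$ for $q'\in S$. Applying the inductive hypothesis to $X_{I'}q = \sum_{E'\subseteq I'}\phi_{I',E'}(q)X_{E'}$ and then pushing $X_{i_1}$ past each scalar $\phi_{I',E'}(q)$, we pick up two terms: one with $s_{i_1}(\phi_{I',E'}(q))$ attached to $X_{(i_1,E')}$, and one with $\Dem_{i_1}(\phi_{I',E'}(q))$ attached to $X_{E'}$. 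Since $s_{i_1}$ preserves the filtration on $S$ (it maps $\IF^k$ to $\IF^k$ because $W$ acts by ring automorphisms fixing the augmentation ideal) and $\Dem_{i_1}$ lowers degree by exactly $1$ (as $\Dem_{i_1} = \frac{1}{x_{i_1}}(1-s_{i_1})$, and $x_{i_1}$ has degree $1$, while $1-s_{i_1}$ maps $\IF^k$ to $\IF^k$), the degrees work out: the first term contributes a coefficient of degree $\deg q - |I'| + |E'| = \deg q - |I| + |(i_1,E')|$, matching $E=(i_1,E')$, and the second contributes degree $\deg q - |I'| + |E'| - 1 = \deg q - |I| + |E'|$, matching $E=E'$. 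Collecting all contributions over subsets $E\subseteq I$ gives the claimed formula, with $\phi_{I,E}(q)$ an appropriate sum of such terms (one should note that a given $E\subseteq I$ may arise both as $(i_1,E')$ for $E'=E\setminus\{1\}$ when $1\in E$ and as $E'=E$ when $1\notin E$, so $\phi_{I,E}$ is a sum, but each summand has the correct degree, hence so does the sum, using that degree is well-behaved under addition within a filtration piece).

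For part (2), take generators $qX_{I_w}\in\DcF^{(i)}$ with $\deg q - \ell(w)\ge i$ and $q'X_{I_v}\in\DcF^{(j)}$ with $\deg q' - \ell(v)\ge j$. Their product is $qX_{I_w}q'X_{I_v}$; applying part (1) to $X_{I_w}q'$ we get $qX_{I_w}q'X_{I_v} = \sum_{E\subseteq I_w}q\,\phi_{I_w,E}(q')\,X_E X_{I_v}$. Here $\deg(q\,\phi_{I_w,E}(q')) = \deg q + \deg q' - \ell(w) + |E|$ (using that degree is additive on products in $\Gr\RcF$, i.e. $S$ has no zero divisors in the associated graded — this follows from $\Gr\RcF\cong S^*_R(\cl)$, a polynomial ring over the domain $R$). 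The element $X_E X_{I_v}$ is a product of Demazure elements $X_{I_{w'}}$-type expressions; rewriting $X_E$ in terms of the basis $\{X_{I_{w'}}\}$ via the relations of Corollary~\ref{cor:present}, and similarly $X_E X_{I_v}$, we express the product as an $S$-combination $\sum p_{w'}X_{I_{w'}}$ where each $p_{w'}$ has degree at least $\deg(q\phi_{I_w,E}(q')) - (|E|+\ell(v)) + \ell(w')$; since $\ell(w')\le |E|+\ell(v)$ for every term appearing, and $|E|\le\ell(w)$, one checks $\deg p_{w'} - \ell(w')\ge \deg q + \deg q' - \ell(w) - \ell(v)\ge i+j$. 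Hence the product lies in $\DcF^{(i+j)}$.

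The main obstacle will be the bookkeeping in part (2): one needs that when $X_E X_{I_v}$ (with $|E|+\ell(v)=:m$) is re-expanded in the basis $\{X_{I_{w'}}\}$, the coefficient of $X_{I_{w'}}$ lies in $\IF^{m-\ell(w')}\cdot S$, i.e. has degree at least $m-\ell(w')$. This is precisely the content of the filtered structure already isolated in the remark preceding Lemma~\ref{lem:filprod} (that $\DcF^{(i)}\cdot\IF^j\subset\IF^{i+j}$, combined with $p^I_{E_1,E_2}\in\IF^{|E_1|+|E_2|-l}$ from the formula after Corollary~\ref{cor:basisQW}, applied to $\Dem_I(uv)$), so it may be cleanest to reduce (2) directly to that estimate: writing $X_I = X_{I_1}\cdots X_{I_k}$ as a single product and applying the multiplication rule $\Dem_I(uv) = \sum p^I_{E_1,E_2}\Dem_{I|_{E_1}}(u)\Dem_{I|_{E_2}}(v)$ translated to the $X$-elements, together with part (1), gives the degree count in one stroke. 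I would phrase the final argument along these lines to avoid repeatedly re-deriving the filtration estimates.
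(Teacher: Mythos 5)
Part (1) is essentially the paper's argument (base case $X_i q = s_i(q)X_i + \Dem_i(q)$ with the two degree contributions, then induction on $|I|$). One small point: your worry about ``double counting'' is spurious --- since $E$ is a subsequence of positions, a given $E\subseteq I$ arises from exactly one of the two cases (position $1$ is in $E$, giving the $s_{i_1}$ term; or not, giving the $\Dem_{i_1}$ term), so each $\phi_{I,E}(q)$ is a single term, not a sum. Also, both you and the paper say ``$\deg \Dem_i(q) = \deg q - 1$''; this should really be read as a lower bound on filtration level ($\Dem_i(q) \in \IF^{\deg q - 1}$), since $\Dem_i(q)$ can land deeper (or vanish). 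That reading is all that (2) needs.

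Part (2) is where your proposal goes off the cleanest path and opens a genuine gap. You start from generators $qX_{I_w}$, $q'X_{I_v}$ with \emph{reduced} words, arrive at $\sum_E q\,\phi_{I_w,E}(q')\,X_E X_{I_v}$, and then try to re-expand $X_E X_{I_v}$ in the basis $\{X_{I_{w'}}\}$. This requires the estimate that the coefficients of that re-expansion lie in $\IF^{(|E|+\ell(v))-\ell(w')}$, which you flag yourself as ``the main obstacle'' but do not actually establish: it does not follow from $\DcF^{(i)}\cdot\IF^j\subset\IF^{i+j}$ (a statement about the action on $S$), nor from Corollary~\ref{cor:present} (which only concerns the difference of two \emph{reduced} words), nor from the coproduct coefficients $p^I_{E_1,E_2}$ (which govern $\Tr(X_I)$, not the basis change). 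Your closing suggestion to route through the Leibniz-type formula for $\Dem_I(uv)$ also targets the wrong object --- that identity lives on $\End_R(S)$ and controls the coproduct, not the product in $\DcF$.

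The paper avoids all of this by never re-expanding: the submodule $\DcF^{(i+j)}$ is \emph{defined} as generated by $qX_I$ over \emph{arbitrary} (not necessarily reduced) sequences $I$, so after applying part (1), each term $p\,\phi_{I,E}(q)\,X_E X_J$ is already of the form $q' X_{I''}$ for the concatenated sequence $I'' = E\cdot J$, and the degree count $\deg(p\,\phi_{I,E}(q)) - |E| - |J| = \deg p + \deg q - |I| - |J| \ge i+j$ shows it is in $\DcF^{(i+j)}$ directly. So: to fix your argument, simply take your generators to be $pX_I$, $qX_J$ over arbitrary sequences (as in the definition), and then you can stop right where you computed the degree of $q\,\phi_{I_w,E}(q')$ --- no basis re-expansion is needed.
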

\begin{proof}
(a) The identity follows from \cite[Lemma 9.3]{CZZ1}. If $I=(i)$, then \[X_iq=s_i(q)X_i+\Dem_i(q)\text{ with }\deg s_i(q)=\deg q\text{ and }\deg \Dem_i(q)=\deg q-1.\] The formula for the degree then follows by induction on $|I|$.

(b) It suffices to show that $pX_IqX_J\in \DcF^{(i+j)}$ if $\deg p-|I|\ge i$ and $\deg q-|J|\ge j$. By part (a) we have \[X_Iq=\sum_{E\subseteq I}\phi_{I,E}(q)X_{E},\text{ where }\deg \phi_{I,E}(q)=\deg q-|I|+|E|.\] So $pX_IqX_J=\sum_{E\subseteq I}p\phi_{I,E}(q)X_{E}X_J$ where 
\[
\deg p+\deg \phi_{I,E}(q)-|E|-|J|=\deg p+\deg q-|I|-|J|\ge i+j.
\]
Hence, $pX_IqX_J\in \DcF^{(i+j)}$.
\end{proof}

Let $\Gr \DcF=\bigoplus_{i\in \Z}\DcF^{(i/i+1)}$, where $\DcF^{(i/i+1)}=\DcF^{(i)}/\DcF^{(i+1)}$, be the associated graded module. By Lemma~\ref{lem:filprod} it respects the product structure and, hence, defines a ring. Consider a map 
\begin{equation}\label{eq:FtoFA}
\eta_i\colon \DcF^{(i)}\to \nila, \quad qX_{I_w} \mapsto (-1)^{\ell(w)} (q)_{i+\ell(w)} x_{w},~q\in S,
\end{equation}
where $\nila$ is the nil affine Hecke algebra of \cite{KK86} (cf. Example \ref{prop:addcompareDem}).
By definition, $\eta_i(qX_{I_w})=0$ if $\deg q-\ell(w)=i+1$.
So it factors through $\eta_i\colon \DcF^{(i/i+1)}\to \nila$. 
\begin{prop}(cf. \cite[Proposition 4.8]{CPZ})\label{prop:filtration} The map $\oplus \eta_i\colon \Gr \DcF\to \nila$ is an isomorphism of $\Gr \RcF\cong S_R^*(\cl)$-modules, and it is also an isomorphism of rings.
\end{prop}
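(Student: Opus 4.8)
The plan is to show that $\oplus \eta_i$ is a bijection first and then upgrade it to a ring isomorphism. For bijectivity, the key observation is that both sides have ``the same'' $S_R^*(\cl)$-basis: by the corollary following Theorem~\ref{thm:mainDem}, $\{X_{I_w}\}_{w\in W}$ is a left $S$-basis of $\DcF$, so $\DcF^{(i)}$ is the $R$-span of $qX_{I_w}$ with $\deg q - \ell(w)\ge i$, and passing to the quotient $\DcF^{(i/i+1)}$ kills exactly those $qX_{I_w}$ with $\deg q - \ell(w) \ge i+1$. Hence $\Gr\DcF$ is freely spanned over $\Gr\RcF \cong S_R^*(\cl)$ by the classes of $X_{I_w}$ in degree $-\ell(w)$, while $\nila$ is freely spanned over $S_R^*(\cl)$ by $\{x_w\}_{w\in W}$ (with $x_w$ placed in degree $-\ell(w)$). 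The map $\eta_i$ sends the class of $qX_{I_w}$ to $(-1)^{\ell(w)}(q)_{i+\ell(w)}x_w$, i.e.\ it sends $(q)_j \cdot (X_{I_w})$ to $(-1)^{\ell(w)}(q)_j x_w$, so on associated graded pieces it is literally the identity on basis elements (up to the harmless sign), matched up via the isomorphism $\Gr\RcF\cong S_R^*(\cl)$. This makes it a well-defined, degree-preserving, $S_R^*(\cl)$-linear bijection. First I would make precise that $\eta_i$ is well-defined independently of the choice of reduced word $I_w$: this uses Corollary~\ref{cor:present}, which says $X_{I_w}-X_{I_w'}$ is an $S$-combination of $X_{I_v}$ with $\ell(v)\le \ell(w)-2$, so the difference lies two filtration steps deeper and dies in the associated graded; hence the formula \eqref{eq:FtoFA} descends unambiguously.

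Next I would verify compatibility with the ring structures. The multiplication on $\Gr\DcF$ is induced by that of $\DcF$, controlled by Lemma~\ref{lem:filprod}(a): $X_I q = \sum_{E\subseteq I}\phi_{I,E}(q)X_E$ with $\deg\phi_{I,E}(q) = \deg q - |I| + |E|$. In the associated graded, only the top-degree term $E = I$ survives (the others drop $|I|-|E|\ge 1$ in degree), and the top-degree part of $\phi_{I,I}(q)$ is, by the inductive computation in the proof of Lemma~\ref{lem:filprod}(a) using $X_i q = s_i(q)X_i + \Dem_i(q)$, exactly $(s_{i_1}\cdots s_{i_l})(q)$ read in $S_R^*(\cl)$ — since $\deg \Dem_i(q) = \deg q - 1$, the Demazure-operator corrections are all lower-degree and vanish. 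Thus the induced multiplication on $\Gr\DcF$ is $\bar q\,(X_{I_w})\cdot \bar q'\,(X_{I_v}) = \bar q \cdot \overline{w(q')}\cdot \overline{X_{I_w}X_{I_v}}$, which is precisely the twisted product defining $Q_{a,W}$ restricted to $\nila$, matching the relation $x_w \cdot q = w(q)\cdot x_w + (\text{lower})$ in $\nila$ recorded in Example~\ref{prop:addcompareDem}. Similarly the products $\overline{X_{I_w}}\cdot\overline{X_{I_v}}$ in $\Gr\DcF$ satisfy the same braid-type relations as the $x_w$ in $\nila$ — again by Corollary~\ref{cor:present}, where in the additive case all the correction coefficients $p_v$ vanish, so $X_{I_w}$ is reduced-word independent and the $\overline{X_{I_w}}$ multiply like the $(-1)^{\ell(w)}x_w$ do. Putting these two checks together shows $\oplus\eta_i$ is multiplicative on products of basis elements, hence a ring homomorphism, and being a bijection, a ring isomorphism.

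The main obstacle, I expect, is not the bijectivity (which is essentially bookkeeping with the two bases and the isomorphism $\Gr\RcF \cong S_R^*(\cl)$ of \cite[Lemma~4.2]{CPZ}) but rather pinning down the induced multiplication precisely enough to recognize it as $\nila$'s. Concretely: one must track that the top-graded part of the structure coefficients $\phi_{I,E}(q)$ and of the relation coefficients in Corollary~\ref{cor:present} are governed purely by the additive (``leading term'') data, so that the $\Gr$-multiplication coincides on the nose — not merely abstractly — with the multiplication in $\nila$ as presented in Example~\ref{prop:addcompareDem}. Since every ingredient (Lemma~\ref{lem:filprod}, Corollary~\ref{cor:present}, and the identification of leading terms with the $F_a$-case) is already available in the excerpt, and the parallel statement \cite[Proposition~4.8]{CPZ} / \cite[Proposition~2.30]{KK90} follows the same pattern, the argument goes through by the routine verification sketched above; the signs $(-1)^{\ell(w)}$ are exactly those appearing in the embedding $\nila\hookrightarrow\DcFa$ of Example~\ref{prop:addcompareDem}, so they are consistent.
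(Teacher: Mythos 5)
Your plan takes a genuinely different route from the paper's. The paper sidesteps any explicit computation of the multiplication on $\Gr\DcF$: it uses Corollary~\ref{cor:injDem} to embed $\DcF$ into $\End_R(S)$, gets from this a faithful action of $\Gr\DcF$ on $\Gr\RcF\cong S_R^*(\cl)$, and then (following \cite[Prop.~4.4]{CPZ}) checks that the class of $qX_{I_w}$ acts on $\Gr\RcF$ the same way $(-1)^{\ell(w)}(q)_{i+\ell(w)}x_w$ acts on $S_R^*(\cl)$. Since $\nila$ acts faithfully on $S_R^*(\cl)$, the compatibility of the two actions immediately forces $\eta$ to be a ring homomorphism, with no need to track lower-order structure coefficients. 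You instead attempt a direct verification of multiplicativity on basis elements. That could in principle work, but as written it has a gap.

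The error is the claim that in $X_I q=\sum_{E\subseteq I}\phi_{I,E}(q)X_E$ ``only the top-degree term $E=I$ survives'' in $\Gr\DcF$ because ``the Demazure-operator corrections are all lower-degree and vanish.'' This is false: the drop in the degree of the coefficient $\phi_{I,E}(q)$ is exactly compensated by the drop in $|E|$. Since $\deg\phi_{I,E}(q)=\deg q-|I|+|E|$, each summand $\phi_{I,E}(q)X_E$ lies in $\DcF^{(\deg\phi_{I,E}(q)-|E|)}=\DcF^{(\deg q-|I|)}$, i.e.\ \emph{every} $E$ contributes at the same filtration level as $E=I$. In particular the correct induced commutation in the associated graded already in rank one is $\overline{X_i}\cdot\bar q=\overline{s_i(q)}\cdot\overline{X_i}+\overline{\Dem_i(q)}$, with the BGG correction term present. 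This is not a harmless omission: the nil Hecke relation in $\nila$ likewise has the correction term (via the embedding $x_w\mapsto(-1)^{\ell(w)}X_w$ of Example~\ref{prop:addcompareDem}, $x_iq=s_i(q)x_i-\partial_i(q)$), so to show $\eta$ is multiplicative one must actually prove that the leading parts of the correction terms agree, i.e.\ $(\Dem_i(q))_{d-1}=\partial_i^a\bigl((q)_d\bigr)$ for $q\in\IF^d$. Asserting the corrections vanish would produce a map that is \emph{not} compatible with the $\nila$ relations. The same issue recurs in your ``only $E=I$'' reduction for the general product $pX_IqX_J$. The rest of your sketch (the well-definedness via Corollary~\ref{cor:present}, and the module isomorphism by comparing the $S_R^*(\cl)$-bases $\{\overline{X_{I_w}}\}$ and $\{x_w\}$) is sound and matches the paper's module-isomorphism step.
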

\begin{proof}It follows from Corollary \ref{cor:injDem} that $\DcF$ is isomorphic to the subalgebra of $\End_R(S)$ generated by $S$ and by $\Dem_i$, $1\le i\le l$. As in the proof of~\cite[Proposition~4.8]{CPZ}, we have an isomorphism $\Gr\DcF\simeq \nila$ of  $\Gr\RcF\simeq S_R^*(\cl)$-modules.
Similar to the \cite[Proposition 4.4]{CPZ}, the action on $\Gr \RcF$ of the class of $qX_{I_w}$ in $\Gr\DcF$  is the same as the action of  $(-1)^{\ell(w)}(q)_{i+\ell(w)}x_w$ on $S_R^*(\cl)$. In particular, it shows that $\eta_i$ preserves the product. Therefore, $\Gr \DcF\simeq \nila$ as rings.
\end{proof}

We define a filtration on $\DcFd$ as follows: for $i\ge 0$ we set 
\[(\DcFd)^{(i)}=\{f\in \DcFd\mid f(\de_w)\in \IF^i \text{ for all }w\in W\}.\]
Then 
\[
\DcFd=(\DcFd)^{(0)}\supsetneq\cdots \supsetneq (\DcFd)^{(i)}\supsetneq (\DcFd)^{(i+1)}\supsetneq \cdots,
\]
and since  $\TR( \de_w)= \de_w\otimes \de_w$, we have 
\[
(\DcFd)^{(i)}(\DcFd)^{(i')}\subseteq(\DcFd)^{(i+i')}.
\]
Let $\Gr\DcFd=\bigoplus_{i\ge 0} (\DcFd)^{(i/i+1)}$, where $(\DcFd)^{(i/i+1)}=(\DcFd)^{(i)}/(\DcFd)^{(i+1)}$,  be the associated graded ring. 
\begin{lem} \label{lem:dualfil}(a) For any $v=s_{i_1}\ldots s_{i_k}$, express \[\de_v=(1-x_{i_1}X_{i_1})\ldots (1-x_{i_k}X_{i_k})=\sum_{I\subset \{i_1,...,i_k\}}p_IX_I.\] 

Then $p_I\in S$ and  $\deg p_I\ge |I|$.

(b) For any $v$, express $\de_v=\sum_{w\le v}b^X_{v,w}X_{I_w}$ as in Lemma~\ref{lem:deltatoX}. 

Then $\deg b^X_{v,w}\ge \ell(w)$. Moreover, $(b^X_{v,w})_{\ell(w)}=(-1)^{\ell(w)}b_{v,w}\in S^*_R(\cl)$ where $b_{v,w}$ was defined in Example \ref{prop:addcompareDem}.
\end{lem}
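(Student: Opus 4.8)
The plan is to prove part (a) first and then deduce part (b) by an easy change of basis argument. For part (a), I would argue by induction on $k$. The base case $k=0$ is trivial ($\de_e = 1 = X_\emptyset$, and $\deg 1 = 0 = |\emptyset|$). For the inductive step, suppose $\de_{v'} = \sum_{I \subseteq \{i_1,\ldots,i_{k-1}\}} p_I X_I$ with $p_I \in S$ and $\deg p_I \ge |I|$, where $v' = s_{i_1}\cdots s_{i_{k-1}}$. Then $\de_v = \de_{v'}(1 - x_{i_k} X_{i_k})$, so I need to understand $\de_{v'} \cdot x_{i_k} X_{i_k}$. Writing $\de_{v'} = \sum_I p_I X_I$ and using Lemma~\ref{lem:filprod}(a), we have $X_I x_{i_k} = \sum_{E \subseteq I} \phi_{I,E}(x_{i_k}) X_E$ with $\deg \phi_{I,E}(x_{i_k}) = \deg x_{i_k} - |I| + |E| = 1 - |I| + |E|$; hence $p_I X_I x_{i_k} X_{i_k} = \sum_{E \subseteq I} p_I \phi_{I,E}(x_{i_k}) X_{E \cup \{i_k\}}$ (the last $X_{i_k}$ concatenates), and the coefficient has degree $\deg p_I + 1 - |I| + |E| \ge 1 + |E| = |E \cup \{i_k\}|$ since $i_k \notin E$. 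Combining the two contributions $\de_{v'}$ and $-\de_{v'} x_{i_k} X_{i_k}$ and collecting terms by subset $J \subseteq \{i_1,\ldots,i_k\}$ gives $\de_v = \sum_J p_J X_J$ with $\deg p_J \ge |J|$, completing the induction. (One should note the $X_I$ here are products over \emph{sequences}, not just reduced expressions, but Corollary~\ref{cor:present} — or just the filtration estimate in Lemma~\ref{lem:filprod} — lets us rewrite any such $X_I$ in the basis $\{X_{I_w}\}$ without decreasing the degree bound, which is all we need.)

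For part (b), the point is that $\de_v = \sum_{w \le v} b^X_{v,w} X_{I_w}$ is obtained from the expression in (a) by collecting the non-reduced words: by Corollary~\ref{cor:present}, rewriting a product $X_I$ with $|I| = m$ and $I$ a word for $w$ in terms of $X_{I_w}$ introduces only correction terms $p X_{I_u}$ with $\ell(u) \le m - 2 = |I| - 2 < \ell(w)$ when $m > \ell(w)$, and the coefficient $p$ lies in $S$ with degree $\ge$ (the degree of the original coefficient) $+\, 2 \ge |I| \ge \ell(u) + 2$ by the degree-tracking in Lemma~\ref{lem:filprod}. Running through all subsets $I$ appearing in (a) and collecting the coefficient of each $X_{I_w}$, we get that $b^X_{v,w}$ is a sum of terms each of degree $\ge \ell(w)$, so $\deg b^X_{v,w} \ge \ell(w)$. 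For the leading term identification, I would pass to the associated graded: the only contributions to $(b^X_{v,w})_{\ell(w)}$ come from the subsets $I \subseteq \{i_1,\ldots,i_k\}$ that are \emph{themselves} reduced words for $w$ (every non-reduced word contributes in degree strictly higher, and every reduced word $I$ for $w' \ne w$ contributes to a different basis element modulo higher filtration). Under the additive degeneration of Example~\ref{prop:addcompareDem}, $X_I \mapsto (-1)^{|I|} X_w$ (independent of reduced word $I$ for $w$), $x_{i_j} \mapsto \al_{i_j}$, and the formula $\de_v = (1 - x_{i_1}X_{i_1})\cdots(1-x_{i_k}X_{i_k})$ becomes exactly $\de_w^a = \sum_w b_{v,w} x_w$ in the nil Hecke algebra; comparing with Example~\ref{prop:addcompareDem}'s $\de_w = \sum_v b_{w,v} x_v$ and tracking the sign $(-1)^{\ell(w)}$ coming from $X_{I_w} \mapsto (-1)^{\ell(w)} x_w$ gives $(b^X_{v,w})_{\ell(w)} = (-1)^{\ell(w)} b_{v,w}$.

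The main obstacle I anticipate is bookkeeping rather than conceptual: one must be careful that the $X_I$'s appearing in the expansion of $(1 - x_{i_1}X_{i_1})\cdots(1-x_{i_k}X_{i_k})$ are indexed by \emph{subwords of a fixed word}, which may be non-reduced, so the passage to the basis $\{X_{I_w}\}$ (and the claim that it does not spoil the degree bound, and that only genuinely-reduced subwords survive in the lowest degree) needs Corollary~\ref{cor:present} together with the degree estimate $p_{w,I_v} \in S$ of degree $\ge \ell(w) - \ell(v) \ge 2$ — which itself follows from Lemma~\ref{lem:filprod}(a) applied to the rewriting. Organizing this so that part (a) is stated for arbitrary words (not just reduced ones) and part (b) is a clean corollary is the way to keep the argument honest.
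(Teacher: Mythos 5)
Your overall plan matches the paper's: induction for part~(a), a change-of-basis estimate plus a passage to the associated graded (additive degeneration) for part~(b). Part~(a) is essentially the paper's argument in mirror image: the paper adds a letter on the \emph{left} ($\de_{s_jv}=\de_j\de_v$, using only $\de_j q=s_j(q)\de_j$), while you append a letter on the right, which forces you to invoke Lemma~\ref{lem:filprod}(a) to commute $x_{i_k}$ past $X_I$. Both are correct; the left-multiplication version is slightly leaner because $\de_j q = s_j(q)\de_j$ preserves degree on the nose with no bookkeeping.

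For the degree bound in part~(b), however, your argument imports machinery that is neither applicable as stated nor needed, and it contains a claim that is false in general. Corollary~\ref{cor:present} compares two \emph{reduced} expressions $I_w$, $I_w'$ of the \emph{same} element; it does not directly tell you how to rewrite an arbitrary (possibly nonreduced) subword $I$ in the basis $\{X_{I_w}\}$. More seriously, your claim that rewriting introduces coefficients of degree $\ge (\text{original degree}) + 2$ is not established anywhere and fails: for instance $X_i^2 = \ka_i X_i$, and $\ka_i$ has degree $0$ (it is a nonzero constant for the multiplicative law), not degree $\ge 2$. The paper's route is both simpler and avoids this: writing $X_I = \sum_{\ell(w)\le |I|} p'_{I,w}X_{I_w}$ with $p'_{I,w}\in S$ (which follows from the $S$-basis property of $\{X_{I_w}\}$, a consequence of the structure theorem), one only needs $\deg p'_{I,w}\ge 0$; the bound $\deg(p_I p'_{I,w})\ge \deg p_I\ge |I|\ge \ell(w)$ is then immediate. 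No finer control of the correction terms, and no appeal to Corollary~\ref{cor:present}, is required. Your identification of the leading term via the additive degeneration is the same idea as the paper's (which packages it as applying the ring isomorphism $\eta_0\colon \Gr^0\DcF \to \nila$ of \eqref{eq:FtoFA} and comparing coefficients in the basis $\{x_w\}$), but again your justification that only the genuinely reduced subwords for $w$ survive in lowest filtration leans on the unsupported ``$+2$'' estimate; the clean argument is simply that if $|I|>\ell(w)$ then $\deg(p_I p'_{I,w})\ge|I|>\ell(w)$, so such terms vanish in $\IF^{\ell(w)}/\IF^{\ell(w)+1}$.
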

\begin{proof}(a) If $k=1$, then $\de_{i_1}=1-x_{i_1}X_{i_1}$, so the conclusion holds. Now suppose it holds for $k$. Then $\de_{v}=\sum p_IX_I$ with $\deg p_I\ge |I|$. 

Consider $w=s_{j}v$ with $\ell(w)>\ell(v)$. Then we obtain
\begin{align*}
\de_w=\de_{s_jv}&=\de_j\sum p_IX_I=\sum_{I}s_j(p_I)\de_jX_I=\sum_{I}s_j(p_I)(1-x_jX_j)X_I\\
&=\sum_{I}s_j(p_I)X_I-\sum_{I}s_j(p_I)x_jX_{\{j\}\cup I}.
\end{align*}
Since $\deg s_j(p_I)\ge |I|$ and $\deg [s_j(p_I)x_j]\ge |I|+1$, the conclusion follows.

(b) From part (a) we can write $\de_v=\sum_{I}p_IX_I$ with $\deg p_I\ge |I|$. For each $I$, we have $X_I=\sum_{\ell(w)\le |I|}p'_{I,w}X_{I_w}$ with $p'_{I,w}\in S$. Therefore, 
\[
\de_v=\sum_{w}\sum_{|I|\ge \ell(w)}p_Ip'_{I,w}X_{I_w},
\]
so $b^X_{v,w}=\sum_{|I|\ge \ell(w)}p_Ip'_{I,w}$. Since 
\[\deg p_Ip'_{I,w}\ge \deg p_I\ge |I|\ge \ell(w),\]
we have $\deg b^X_{v,w}\ge \ell(w)$. 

To prove the last part, consider the map $\DcF^{(0/-1)}\to \nila$ defined in \eqref{eq:FtoFA}, then $\de_v=\sum_{w\le v}b^X_{v,w}X_{I_w}$ is mapped to $\de^A_v=\sum_{w\le v}(-1)^{\ell(w)}(b^X_{v,w})_{\ell(w)}x_w$ in $\nila$ because this map preserves product. Since  $\de_v=\sum_{w\le v}b_{v,w}x_w$ in the twisted group algebra of \cite{KK86} and the set $\{x_w\}_{w\in W}$ is linearly independent, so $b_{v,w}=(-1)^{\ell(w)}(b^X_{v,w})_{\ell(w)}$. 
\end{proof}

\begin{lem}\label{lem:dualfiltration}
(a) The submodule $(\DcFd)^{(i)}$ is (possibly infinitely) spanned by elements $qX_{I_w}^*$ such that $\deg q+\ell(w)\ge i$.

(b) The quotient $(\DcFd)^{(i/i+1)}$ is finitely spanned by elements $qX^*_{I_w}$ such that $\deg q+\ell(w)=i$. 
\end{lem}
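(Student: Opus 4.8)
The plan is to dualize the degree bookkeeping established for $\DcF$ in Lemma~\ref{lem:filprod} and Lemma~\ref{lem:dualfil}. First I would record the transition between the two natural bases: by Lemma~\ref{lem:deltatoX} we have $\de_v=\sum_{w\le v}b^X_{v,w}X_{I_w}$ with $b^X_{v,w}\in S$, and dually $X_{I_v}=\sum_{w\le v}a^X_{v,w}\de_w$; taking $Q$-linear (hence $Q$-module) duals gives $X_{I_w}^*=\sum_{v\ge w}b^X_{v,w}f_v$ inside $Q_W^*$ (a possibly infinite sum), and conversely $f_w=\sum_{v\ge w}a^X_{v,w}X_{I_v}^*$. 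The key input is Lemma~\ref{lem:dualfil}(b): $\deg b^X_{v,w}\ge \ell(w)$. This immediately yields, for any $q\in S$ with $\deg q+\ell(w)\ge i$, that $(qX_{I_w}^*)(\de_v)=q\,b^X_{v,w}\in \IF^{\deg q+\deg b^X_{v,w}}\subseteq \IF^{\deg q+\ell(w)}\subseteq \IF^i$ for every $v$, so $qX_{I_w}^*\in(\DcFd)^{(i)}$. This proves that the span of such elements lies in $(\DcFd)^{(i)}$, giving one inclusion of part (a).

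For the reverse inclusion in part (a), take $f\in(\DcFd)^{(i)}$ and write $f=\sum'_{w\in W}q_wX_{I_w}^*$ with $q_w=f(X_{I_w})\in S$ (this expansion is legitimate since $\{X_{I_w}\}$ is a basis of $\DcF$ over $S$ and each $f$ is determined by its values on it). I must show $\deg q_w+\ell(w)\ge i$ for every $w$, i.e., $q_w\in\IF^{i-\ell(w)}$. Fix $w$ and argue by induction on $\ell(w)$. Using $\de_w=\sum_{v\le w}b^X_{w,v}X_{I_v}$ from Lemma~\ref{lem:deltatoX}, with $b^X_{w,w}=(-1)^{\ell(w)}c_w$ a unit times a product of $\ell(w)$ factors $x_\al$ (hence of degree exactly $\ell(w)$), we compute
\[
f(\de_w)=\sum_{v\le w} b^X_{w,v}\,q_v = (-1)^{\ell(w)}c_w\,q_w+\sum_{v<w}b^X_{w,v}\,q_v.
\]
Since $f\in(\DcFd)^{(i)}$ we have $f(\de_w)\in\IF^i$. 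By the inductive hypothesis $q_v\in\IF^{i-\ell(v)}$ for $v<w$, and $\deg b^X_{w,v}\ge\ell(v)$ by Lemma~\ref{lem:dualfil}(b), so each term $b^X_{w,v}q_v$ lies in $\IF^{\ell(v)+(i-\ell(v))}=\IF^i$. Hence $c_w\,q_w\in\IF^i$. Because $c_w$ is a product of $\ell(w)$ elements $x_\al$, each of degree $1$, and $\Gr\RcF\cong S^*_R(\cl)$ is a polynomial ring (an integral domain), the element $(c_w)_{\ell(w)}$ is a nonzerodivisor in $\Gr\RcF$; therefore $c_wq_w\in\IF^i$ forces $q_w\in\IF^{i-\ell(w)}$, completing the induction. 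The base case $\ell(w)=0$ (so $w=e$, $\de_e=X_{I_e}$) is just $q_e=f(\de_e)\in\IF^i$.

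For part (b), by part (a) the quotient $(\DcFd)^{(i/i+1)}$ is spanned by images of $qX_{I_w}^*$ with $\deg q+\ell(w)\ge i$; but if $\deg q+\ell(w)\ge i+1$ then $qX_{I_w}^*\in(\DcFd)^{(i+1)}$, so its image is zero, and thus only the elements with $\deg q+\ell(w)=i$ survive. Finiteness follows because for such a term $\ell(w)\le i$, so only finitely many $w\in W$ occur (for each fixed length there are finitely many $w$, as $W$ acts with the Coxeter property and $\len{w}$ has $\ell(w)$ elements), and for each such $w$ the classes $(q)_{i-\ell(w)}$ range over the finite-dimensional-over-$R$ graded piece $\IF^{i-\ell(w)}/\IF^{i-\ell(w)+1}\cong S^{i-\ell(w)}_R(\cl)$, which is finitely generated as an $R$-module (free of finite rank). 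I expect the main obstacle to be the reverse inclusion in part~(a): one must be careful that the expansion $f=\sum'q_wX_{I_w}^*$ is well-defined, that the induction on Bruhat length is set up so that $f(\de_w)$ only involves $q_v$ with $v\le w$, and that the cancellation of the leading factor $c_w$ is justified at the level of the graded ring rather than in $S$ itself (where $c_w$ is not a nonzerodivisor issue but one needs $x_\al^{-1}$ to stay inside $S$, which it does not — hence the passage to $\Gr\RcF$).
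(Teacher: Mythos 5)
Your proof is correct, and in fact more complete than the argument the paper gives. For the forward inclusion (the span of $qX_{I_w}^*$ with $\deg q+\ell(w)\ge i$ lies inside $(\DcFd)^{(i)}$) both you and the paper invoke Lemma~\ref{lem:dualfil}(b) via $qX_{I_w}^*(\de_v)=qb^X_{v,w}$, and both use $qX_{I_w}^*(\de_w)=(-1)^{\ell(w)}qc_w$ to show the degree is sharp. However, the paper's proof stops there: it proves that each such $qX_{I_w}^*$ lies in $(\DcFd)^{(\deg q+\ell(w))}$ and not in $(\DcFd)^{(j)}$ for larger $j$, but does not explicitly verify the reverse inclusion $(\DcFd)^{(i)}\subseteq\text{span}$, which is needed for the word ``spanned'' in part (a) to be justified. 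Your induction on $\ell(w)$ using $f(\de_w)=\sum_{v\le w}b^X_{w,v}q_v$ fills exactly this gap, and you correctly identify the subtle point: to cancel the leading factor $c_w$ from $c_wq_w\in\IF^i$ one must pass to the associated graded $\Gr\RcF\cong S^*_R(\cl)$, which is a domain (using that $R$ is a domain, the standing hypothesis of this part of the paper), so that $(c_w)_{\ell(w)}$ is a nonzerodivisor and one can conclude $q_w\in\IF^{i-\ell(w)}$. Part (b) is handled the same way in both your argument and the paper's intent (vanishing of images with $\deg q+\ell(w)>i$, finitely many $w$ of each length, and finite rank of the graded pieces of $S^*_R(\cl)$).

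Note in passing that the Bruhat-order bookkeeping in your induction is sound: in $\sum_{v\le w}b^X_{w,v}q_v$, every $v<w$ has $\ell(v)<\ell(w)$, so inducting on length of $w$ suffices.
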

\begin{proof} It follows from Lemma \ref{lem:dualfil}.(2)  that $qX^*_{I_w}\in (\DcFd)^{(\deg q+\ell(w))}$. Moreover,  we have
\[qX^*_{I_w}(\de_w)=qb^X_{w,w}=(-1)^{\ell(w)}qc_w\in \IF^{\deg q+\ell(w)}\backslash \IF^{\deg q+\ell(w)+1}. 
\]
So $qX^*_{I_w}\not\in (\DcFd)^{(j)}$ for $j>\deg q+\ell(w)$. 
\end{proof}

The following corollary shows that the filtration agrees with the one defined in \cite[\S7.4]{CPZ}.

\begin{cor} We have $(\DcFd)^{(i)}=\{f\in \DcFd\mid f(\DcF^{(-i+1)})\subseteq \IF\}$.
\end{cor}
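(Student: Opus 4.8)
The plan is to prove both inclusions of the claimed equality $(\DcFd)^{(i)}=\{f\in \DcFd\mid f(\DcF^{(-i+1)})\subseteq \IF\}$ by exploiting the two dual bases $\{X_{I_w}\}_{w\in W}$ of $\DcF$ and $\{X_{I_w}^*\}_{w\in W}$ of $\DcFd$, together with the degree estimates already established. First I would record the two facts I intend to use: by Lemma~\ref{lem:dualfiltration}(a), an element $f\in(\DcFd)^{(i)}$ can be written $f=\sum'_{w}q_w X_{I_w}^*$ with $q_w\in S$ and $\deg q_w+\ell(w)\ge i$ for every $w$; and by the definition of the filtration on $\DcF$ together with the basis property, $\DcF^{(-i+1)}$ is spanned over $S$ (on the left) by the $X_{I_w}$, or more precisely it is the $R$-span of elements $pX_{I_v}$ with $\deg p-\ell(v)\ge -i+1$.

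For the inclusion $(\DcFd)^{(i)}\subseteq\{f\mid f(\DcF^{(-i+1)})\subseteq\IF\}$, take $f=\sum'_w q_w X_{I_w}^*$ as above and a generator $pX_{I_v}$ of $\DcF^{(-i+1)}$, so $\deg p-\ell(v)\ge -i+1$, i.e. $\deg p\ge \ell(v)-i+1$. Then $f(pX_{I_v})=p\,f(X_{I_v})=p\,q_v$ (using $S$-linearity and $X_{I_w}^*(X_{I_v})=\de_{w,v}$), and $\deg(p q_v)=\deg p+\deg q_v\ge(\ell(v)-i+1)+(i-\ell(v))=1$, so $pq_v\in\IF$. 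Summing over the (finitely many relevant) generators shows $f(\DcF^{(-i+1)})\subseteq\IF$. For the reverse inclusion, suppose $f(\DcF^{(-i+1)})\subseteq\IF$; writing $f=\sum'_w q_w X_{I_w}^*$ I must show $\deg q_w+\ell(w)\ge i$ for every $w$, so that Lemma~\ref{lem:dualfiltration}(a) (combined with the definition of $(\DcFd)^{(i)}$ via the characterization by values on the $X_{I_w}$, or by translating back to values on $\de_w$ via Lemma~\ref{lem:dualfil}) yields $f\in(\DcFd)^{(i)}$. Fix $w$ and suppose for contradiction $\deg q_w+\ell(w)\le i-1$. Choose $p\in S$ with $\deg p=\max(0,\ell(w)-i+1)$ arranged so that $\deg p-\ell(w)\ge -i+1$; then $pX_{I_w}\in\DcF^{(-i+1)}$ but $f(pX_{I_w})=pq_w$ has degree $\deg p+\deg q_w$, and by choosing $p$ of minimal admissible degree one forces this to be $\le 0$, i.e. $pq_w\notin\IF$ unless $q_w=0$ — a contradiction, after checking the boundary arithmetic carefully.

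The main obstacle I anticipate is the bookkeeping at the degree boundary: making sure the characterization of $(\DcFd)^{(i)}$ I use is exactly the one from the definition (values on $\de_w$ lying in $\IF^i$) and reconciling it with the $X_{I_w}^*$-description of Lemma~\ref{lem:dualfiltration}(a), since the passage between the $\{\de_w\}$ and $\{X_{I_w}\}$ bases mixes degrees (Lemma~\ref{lem:dualfil}(b) guarantees $\deg b^X_{v,w}\ge\ell(w)$, which is exactly what keeps the two descriptions compatible). A secondary subtlety is that the sums $\sum'_w$ are possibly infinite, so I should phrase the argument generator-by-generator on $\DcF^{(-i+1)}$ — which is legitimate since $\DcF^{(-i+1)}$ is $R$-spanned by the elements $pX_{I_v}$ and $f$ is $R$-linear — rather than trying to evaluate $f$ on an infinite combination. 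Once the boundary arithmetic is pinned down, both inclusions are immediate from the degree counts above.
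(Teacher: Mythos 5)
Your overall strategy — describing $(\DcFd)^{(i)}$ via Lemma~\ref{lem:dualfiltration} and pairing it against the $pX_{I_v}$ generators of $\DcF^{(-i+1)}$, with a degree count on the scalar $pq_v$ — matches the paper's own (much terser) argument, and your verification of $(\DcFd)^{(i)}\subseteq\{f\mid f(\DcF^{(-i+1)})\subseteq\IF\}$ is complete. But the reverse inclusion breaks exactly at the ``boundary arithmetic'' you defer to checking carefully. When $\ell(w)\le i-2$, the minimal admissible degree for $p$ is $\max(0,\ell(w)-i+1)=0$, so taking $p$ invertible gives only $q_w=f(X_{I_w})\in\IF$, i.e.\ $\deg q_w\ge 1$; membership in $(\DcFd)^{(i)}$ requires $\deg q_w\ge i-\ell(w)\ge 2$ for such $w$, and no choice of $p\in S$ with $\deg p\ge\ell(w)-i+1$ can force $\deg p+\deg q_w\le 0$ in this range, since degrees in $S$ are nonnegative. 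The contradiction you are aiming for therefore does not materialize, and this is a genuine gap, not a routine check.

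Indeed it is not only a gap in your write-up. Take $i=2$ and $f=x_\la X^*_{I_e}$ for a nonzero $\la\in\cl$. Then $f(qX_{I_w})=qx_\la\de_{e,w}$, so $f(\DcF^{(-1)})\subseteq x_\la S\subseteq\IF$, yet $f(\de_{s_j})=x_\la\notin\IF^2$; hence $f$ lies in the right-hand side but not in $(\DcFd)^{(2)}$. The paper's proof only records that $\deg(pq)\ge j-i+1$, hence that $(\DcFd)^{(j)}$ pairs into $\IF$ precisely when $j\ge i$; that gives the forward inclusion and shows the index $i$ is sharp, but it does not deliver the reverse containment either. A statement your degree count does establish cleanly is
\[
(\DcFd)^{(i)}=\{f\in\DcFd\mid f(\DcF^{(j)})\subseteq\IF^{i+j}\ \text{for all}\ j\in\Z\},
\]
with the reverse inclusion obtained by taking $j=-\ell(w)$ and $p=1$ for each $w$, which yields $\deg q_w\ge i-\ell(w)$ directly.
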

\begin{proof}By Lemma~\ref{lem:dualfiltration} $(\DcFd)^{(j)}$ is spanned by elements $qX_{I_w}^*$ with $\deg q+\ell(w)\ge j$. By definition, $\DcF^{(-i+1)}$ is spanned by $pX_{I_v}$ with $\deg p-\ell(v)\ge -i+1$. We have $pX^*_{I_w}(qX_{I_v})=pq\de_{w,v}$. If $w=v$, then $\deg (pq)=\deg p+\deg \ge j-i+1$. So $pX_{I_w}^*(qX_{I_v})\in \IF$ if and only if $j\ge i$.  
\end{proof}

Let $Q_{a,W}$ be the twisted group algebra in Example \ref{prop:addcompareDem} with basis $\{\de_w^a\}$, and $Q_{a,W}^*$ be its dual. Then the $S_R^*(\Lambda)$-dual  $\nila^*$ of $\nila$ is contained in $Q_{a,W}^*$. Since $\{x_w\}_{w\in W}$ is a basis of $\nila$, it defines a dual basis $\{x_w^*\}$ in $\nila^*$.  We define a map of rings
\[
\phi_i:(\DcFd)^{(i)}\to Q_{a,W}^*, \quad \phi_i(f)(\de_w^a)=(f(\de_w))_i,
\]

If $f\in (\DcFd)^{(i+1)}$, then $f(\de_w)\in \IF^{i+1}$, so $(f(\de_w))_i=0$. Therefore, $\phi_i$ induces a map 
\[
\phi_i':(\DcFd)^{(i)}/(\DcFd)^{(i+1)}\to Q_{a,W}^*.
\]

We now ready to state and to prove the main result of this section
\begin{theo}(cf. \cite[Proposition 2.30]{KK90})\label{thm:grnil}
The map $\phi:=\oplus_{i\ge 0}\phi_i'$ is a ring isomorphism $\Gr\DcFd\cong \nila^*$.
\end{theo}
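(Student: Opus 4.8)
The plan is to show $\phi$ is a well-defined ring homomorphism, then exhibit it as a bijection by a degreewise basis count using the explicit descriptions of $(\DcFd)^{(i/i+1)}$ and $\nila^*$.

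First I would verify $\phi_i$ is a ring homomorphism. The coproduct on $\DcF$ is induced by $\TR(\de_w)=\de_w\otimes\de_w$ from $Q_W$, and the coproduct on $\nila$ used to define the product on $\nila^*$ is induced by the analogous diagonal on $Q_{a,W}$; under the map $S\to S_R^*(\cl)$, $q\mapsto (q)_{\deg q}$ appearing in Lemma~\ref{lem:dualfil}(b) and Example~\ref{prop:addcompareDem}, these structures are compatible. Concretely, for $f\in(\DcFd)^{(i)}$, $g\in(\DcFd)^{(i')}$ one has $(fg)(\de_w)=f(\de_w)g(\de_w)$, and taking the leading term $((fg)(\de_w))_{i+i'}=(f(\de_w))_i\cdot(g(\de_w))_{i'}$ in $\Gr\RcF\cong S_R^*(\cl)$, which is exactly $(\phi_i(f)\phi_{i'}(g))(\de_w^a)$ — this uses that the product in $\Gr\RcF$ is induced from that of $S$ and that $\IF^i\cdot\IF^{i'}\subseteq\IF^{i+i'}$. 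Compatibility with units is the identity $\unit(\de_w)=\de_{w,e}$ passing to $\de_w^a$. It remains to check $\phi_i$ vanishes on $(\DcFd)^{(i+1)}$, which is immediate, so $\phi_i'$ and hence $\phi=\oplus\phi_i'$ is a well-defined graded ring map.

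Next I would prove bijectivity degree by degree. By Lemma~\ref{lem:dualfiltration}(b), $(\DcFd)^{(i/i+1)}$ is finitely spanned by the classes of $qX^*_{I_w}$ with $q\in S$, $\deg q+\ell(w)=i$; modulo $(\DcFd)^{(i+1)}$ only the leading term $(q)_{\deg q}$ of $q$ matters, so this quotient is spanned by $\{(q)_{\deg q}X^*_{I_w}: \deg q + \ell(w) = i\}$, i.e.\ it is identified with $\bigoplus_{w}\, S_R^*(\cl)_{i-\ell(w)}\otimes X^*_{I_w}$ where the subscript denotes the degree-$(i-\ell(w))$ piece. On the other side, $\nila^*$ has $S_R^*(\cl)$-basis $\{x_w^*\}_{w\in W}$, graded by putting $x_w^*$ in degree $\ell(w)$ (matching the codimension filtration of \cite[Def.~2.28]{KK90}), so its degree-$i$ part is $\bigoplus_w S_R^*(\cl)_{i-\ell(w)}\otimes x_w^*$. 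I would then compute $\phi_i((q)_{\deg q}X^*_{I_w})(\de_v^a)$: since $X^*_{I_w}(\de_v)=X^*_{I_w}(\sum_{u\le v}b^X_{v,u}X_{I_u})=b^X_{v,w}$ (for $w\le v$, else $0$), we get $\phi_i((q)_{\deg q}X^*_{I_w})(\de_v^a)=((q)_{\deg q}\,b^X_{v,w})_i$, whose leading term is $(q)_{\deg q}\cdot(b^X_{v,w})_{\ell(w)} = (-1)^{\ell(w)}(q)_{\deg q}\, b_{v,w}$ by Lemma~\ref{lem:dualfil}(b). Comparing with $x_w^*(\de_v^a)$ in the dual basis expansion $\de_v^a=\sum_{u\le v}b_{v,u}x_u$ of Example~\ref{prop:addcompareDem}, this shows $\phi_i$ sends $(q)_{\deg q}X^*_{I_w}$ to $(-1)^{\ell(w)}(q)_{\deg q}\,x_w^*$ up to the unit sign, hence maps the spanning set bijectively onto the corresponding $S_R^*(\cl)$-basis of $(\nila^*)_i$. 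Therefore each $\phi_i'$ is an isomorphism of $S_R^*(\cl)$-modules, and together with the homomorphism property we conclude $\phi$ is a ring isomorphism.

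The main obstacle I anticipate is bookkeeping the identification of the two gradings and verifying that the "leading term" operations $q\mapsto(q)_{\deg q}$ and $b^X_{v,w}\mapsto(b^X_{v,w})_{\ell(w)}$ interact correctly — in particular that $\phi_i$ does not lose information, i.e.\ that the minimal $\IF$-adic order of $X^*_{I_w}$ on $\de_v$ is exactly $\ell(w)$ (achieved at $v=w$ via $b^X_{w,w}=(-1)^{\ell(w)}c_w$, $\deg c_w=\ell(w)$, which is Lemma~\ref{lem:dualfiltration}). Once this sharpness is in place the basis count is forced, so the argument is essentially the same as the finite-type computation of \cite[Proposition~2.30]{KK90}, now underpinned by the structure theorem (Theorem~\ref{thm:mainDem}) which guarantees $\{X_{I_w}\}$ is an $S$-basis of $\DcF$ and hence that $\{X^*_{I_w}\}$ behaves as expected.
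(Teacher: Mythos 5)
Your proposal is correct and follows essentially the same route as the paper: both establish the ring-homomorphism property by passing to leading terms in $\Gr\RcF$, and both prove bijectivity by the explicit formula $\phi_i'(qX^*_{I_w})=(-1)^{\ell(w)}(q)_{\deg q}x_w^*$, relying on Lemmas~\ref{lem:dualfil} and \ref{lem:dualfiltration}. The only cosmetic difference is that you verify this by pairing with $\de_v^a$, while the paper pairs with $x_v$ via the relation $\sum_u a_{v,u}b_{u,w}=\de_{v,w}$; these are trivially equivalent.
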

\begin{proof}
First, we show that $\phi$ is a ring homomorphism. Suppose $f\in (\DcFd)^{(i)}/(\DcFd)^{(i+1)}$ and $g\in (\DcFd)^{(j)}/(\DcFd)^{(j+1)}$, then 
\begin{align*}
\phi(fg)(\de_w^a) &=(fg(\de_w))_{i+j}=(f(\de_w))_i(g(\de_w))_j\\
&=\phi(f)(\de_w^a)\cdot \phi(g)(\de^a_w)=[\phi(f)\phi(g)](\de_w^a).
\end{align*}
So $\phi(fg)=\phi(f)\phi(g)$. 

Let $f=\sum'_{w\in W}q_wX_{I_w}^*\in (\DcFd)^{(i)}/(\DcFd)^{(i+1)}$ with $q_w\in S$. Then by Lemma~\ref{lem:dualfiltration}, we know that $\ell(w)=i-\deg q_w\le i$, so it is a finite sum. Therefore, it suffices to show that  $\phi_i'(qX_{I_w}^*)=(-1)^{\ell(w)}(q)_{\deg q}x_w^*$ for any $q\in S$ such that $\deg q+\ell(w)=i$. Recall from Example \ref{prop:addcompareDem} that in $\nila$, we have $x_v=\sum_{u}a_{v,u}\de_u^a$ such that $\sum_{u}a_{v,u}b_{u,w}=\de_{v,w}$. In $\DcF$, by Lemma \ref{lem:dualfil} we have $\de_u=\sum_{w'}b^X_{u,w'}X_{I_{w'}}$ such that $(b^X_{u,w'})_{\ell(w')}=(-1)^{\ell(w')}b_{u,w'}$. So 
\begin{align*}
\phi_i'(qX_{I_w}^*)(x_{v}) &=\phi_i'(qX_{I_w}^*)(\sum_ua_{v,u}\de_u^a)=\sum_{u}a_{v,u}\phi_i(qX_{I_w}^*)(\de_u^a)\\
&=\sum_ua_{v,w}(qX_{I_w}^*(\de_u))_i=\sum_{u}a_{v,u}(qX^*_{I_w}(\sum_{w'}b^X_{u,w'}X_{I_{w'}}))_{i}\\&=\sum_{u}a_{v,u}(qb^X_{u,w})_i=\sum_{u}(q)_{\deg q}a_{v,u}(-1)^{\ell(w)}b_{u,w}\\
&=(-1)^{\ell(w)}(q)_{\deg q}\de_{v,w}.
\end{align*}
Therefore, $\phi'_i(qX^*_{I_w})=(-1)^{\ell(w)}(q)_{\deg q}x_w^*$ and $\oplus_{i\ge 0}\phi_i'\colon \Gr\DcFd\to \nila^*$ is an isomorphism.
\end{proof}

\section{Formal affine Hecke algebras}\label{sec:formhecke}
In this section we define the formal affine Hecke algebra for a generalized Cartan matrix and a Demazure lattice
following \cite{HMSZ} and \cite{ZZ}. We state the structure theorem, whose proof is similar to the one for the formal affine Demazure algebra.

Fix a free abelian group $\Ga$ of rank 1  generated by $\ga$, and let $(S^F)':=R\lbr\Ga\oplus\cl\rbr_F$. Define   $(Q^F)'= (S^F)'[\frac{1}{x_\al }\mid\al\in \Phi]$, and let $(Q_W^F)'$ be the respective twisted formal group algebra defined over $R$, i.e., $(Q^F_W)'=(Q^F)'\rtimes_{R} R[W]$. For each  root $\al$, define the element 
 $$T_\al^F:=x_\ga X_\al+\de_\al\in (Q_W^F)'.$$
If  the formal group law $F$ is clear from the context, we will write $S'=(S^F)'$, $Q'=(Q^F)'$,  $Q_W'=(Q_W^F)'$ and $T_i=T_{\al_i}^F$ for simplicity. We have $\de_wT_{\al_i} \de_{w^{-1}}=T_{w(\al_i)}$. 

For a sequence $I=(i_1,...,i_l)$ in $[n]$, we denote 
$$T_I=T_{i_1...i_l}=T_{i_1}\cdot ...\cdot T_{i_l}.$$
For each $w$, we choose a reduced sequence $I_w$. Then  $T_{I_w}$ depends on the choice of $I_w$ unless $F$ is additive or multiplicative. It is straightforward to show that
\[
T_i^2=x_\ga \ka_iT_i+1-x_\ga \ka_i, \quad T_iq-s_i(q)T_i=x_\ga\Dem_i(q), ~q\in Q.
\]
 \begin{defi} Define the \textit{formal affine Hecke algebra} $\HF$ to be  the $R$-subalgebra of $Q_W'$ generated by elements of $S$ and $T_{i}, i\in [n]$.
 \end{defi}

Note that the definition of $\HF$ depends on the choice of simple roots, since $\de_i\not\in \HF$ and $T_{w(\al_i)}:=\de_wT_i\de_{w^{-1}}$ may not belong to $\HF$. 

\begin{lem} For any reduced sequence $I_w$ we have \[T_{I_w}=\sum_{v\le w}q_{I_w, v}\de_v\text{ with }q_v\in Q',\text{ and }q_w=\prod_{\al\in \Phi_w}\frac{x_\al-x_\ga}{x_\al}.\]
\end{lem}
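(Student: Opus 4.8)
The statement is the Hecke-algebra analogue of Lemma~\ref{lem:deltatoX}, so I would mimic that proof, which in turn follows \cite[Lemma 5.4]{CZZ1}. First I expand each generator $T_i$ in the canonical basis $\{\de_w\}$ of $Q_W'$: by definition $T_i = x_\ga X_i + \de_i$, and since $X_i = \tfrac{1}{x_i}(1-\de_i)$ we get $T_i = \tfrac{x_\ga}{x_i} + (1 - \tfrac{x_\ga}{x_i})\de_i = \tfrac{x_\ga}{x_i} + \tfrac{x_i - x_\ga}{x_i}\de_i$. In particular $T_i$ is supported on $\{e, s_i\}$, with $\de_i$-coefficient $\tfrac{x_i - x_\ga}{x_i}$ and $e$-coefficient lying in $Q'$.

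**Main argument by induction on length.** I would induct on $\ell(w)$. For $\ell(w) = 1$ the formula above gives the claim (the empty product being $1$ for the $e$-term is irrelevant; the $s_i$-term is $\tfrac{x_{\al_i}-x_\ga}{x_{\al_i}}$ and $\Phi_{s_i} = \{\al_i\}$). For the inductive step, write $I_w = (i, I_{w'})$ with $w = s_i w'$, $\ell(w) = \ell(w') + 1$, so $w' = s_i w$ and $\Phi_w = \{\al_i\} \sqcup s_i(\Phi_{w'})$ by property~(c) of the preliminaries. Then $T_{I_w} = T_i \cdot T_{I_{w'}} = \big(\tfrac{x_\ga}{x_i} + \tfrac{x_i - x_\ga}{x_i}\de_i\big)\big(\sum_{v \le w'} q_{I_{w'},v}\de_v\big)$. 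Expanding and using $\de_i \cdot q\de_v = s_i(q)\de_{s_i v}$, I get a sum over $v \le w'$ of terms supported on $\{v, s_i v\}$. Standard Bruhat-order combinatorics (exactly as in \cite{CZZ1}) shows the resulting support lies in $\{u : u \le w\}$, so $T_{I_w} = \sum_{v \le w} q_{I_w, v}\de_v$ with $q_{I_w,v}\in Q'$. The top coefficient: the only way to land on $w = s_i w'$ is from the $\de_v$ with $v = w'$ hitting $\de_i$, contributing $\tfrac{x_i - x_\ga}{x_i} \cdot s_i(q_{I_{w'},w'})$ (there is no competing contribution since $w'$ is the unique maximal element in the support of $T_{I_{w'}}$ and $s_i w' = w > w'$). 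By the inductive hypothesis $q_{I_{w'},w'} = \prod_{\al \in \Phi_{w'}} \tfrac{x_\al - x_\ga}{x_\al}$, hence
\[
q_{I_w,w} = \frac{x_{\al_i} - x_\ga}{x_{\al_i}} \cdot s_i\!\Big(\prod_{\al \in \Phi_{w'}} \frac{x_\al - x_\ga}{x_\al}\Big) = \frac{x_{\al_i} - x_\ga}{x_{\al_i}} \cdot \prod_{\al \in \Phi_{w'}} \frac{x_{s_i(\al)} - x_\ga}{x_{s_i(\al)}},
\]
using that $s_i$ acts trivially on $x_\ga$ (as $\ga$ generates a summand $\Ga$ fixed by $W$) and $w(x_\al) = x_{w(\al)}$. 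Since $\{\al_i\} \sqcup s_i(\Phi_{w'}) = \Phi_w$, this is exactly $\prod_{\al \in \Phi_w} \tfrac{x_\al - x_\ga}{x_\al}$, completing the induction.

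**Expected obstacle.** The genuinely non-routine point is the same one handled in \cite[Lemma 5.4]{CZZ1}: verifying that the support of $T_{I_w}$, after the inductive expansion, is contained in $\{u \le w\}$ in the Bruhat order — one must check that when $\de_i$ acts on $\de_v$ with $v \le w'$, the new index $s_i v$ still satisfies $s_i v \le w = s_i w'$. This follows from the lifting property / subword characterization of Bruhat order and the hypothesis $\ell(s_i w') > \ell(w')$, but it is the one place where more than bookkeeping is needed. I would simply cite \cite[Lemma 5.4]{CZZ1} (and \cite[Lemma~3.2]{CZZ2}) for this combinatorial fact, since the argument is word-for-word identical once the coefficient $\tfrac{x_i - x_\ga}{x_i}$ is substituted for $-\tfrac{1}{x_i}$; the only new content here is the explicit top-coefficient computation above, which is a direct calculation. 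I should also note in passing that the $\de_v$-coefficients $q_{I_w,v}$ for $v < w$ genuinely depend on the choice of reduced word $I_w$ unless $F$ is additive or multiplicative, consistent with the remark preceding the lemma.
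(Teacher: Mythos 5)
Your proof is correct and follows exactly the same route as the paper: the paper's entire argument is the observation that $T_\al=\tfrac{x_\ga}{x_\al}+\tfrac{x_\al-x_\ga}{x_\al}\de_\al$ followed by an appeal to the proof of \cite[Lemma 5.4]{CZZ1}, and your write-up simply unpacks that induction, including the Bruhat-order support claim and the top-coefficient computation via $\len{w}=\{\al_i\}\sqcup s_i(\len{w'})$.
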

\begin{proof}Note that $T_\al=\frac{x_\ga}{x_\al}+\frac{x_\al-x_\ga}{x_\al}\de_\al$, so the conclusion follows similar to the proof of \cite[Lemma 5.4]{CZZ1}.
\end{proof}
\begin{lem}\label{lem:keyHecke}Assume that $R$ is a domain of characteristic 0.  
Let $z=\sum_{\ell(w)\le k}p_wT_{I_w}$, $p_w\in S'$, be such that $z\cdot S'\subseteq x_\al S'$. Then $x_\al|p_w$ for all $w$.
\end{lem}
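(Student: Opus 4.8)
The plan is to mimic the proof of Proposition~\ref{lem:keyalpha} for the formal affine Demazure algebra, since the twisted group algebra $Q'_W$ built from $(S^F)' = R\lbr \Ga\oplus\cl\rbr_F$ enjoys the same general-position features: the only new ingredient is the extra variable $x_\ga$, which is central and $W$-invariant (as $W$ fixes $\Ga$), so it does not interact with any of the reflection arguments. First I would reduce, exactly as before, to the case $\de_\al z = \pm z$: since $\tfrac{x_\al}{x_{-\al}}$ is invertible and $(\de_\al z)\cdot S' \subseteq x_{-\al}S' = x_\al S'$, and since $x_\al$ is still irreducible in $(S^F)'$ (Lemma~\ref{lem:irred} applies to the lattice $\Ga\oplus\cl$, as $\al$ remains a primitive vector there), one splits $z$ into $z^\pm = z \pm \de_\al z$ and works with each summand. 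Next I would pass from the basis $\{T_{I_w}\}$ to the canonical basis $\{\de_w\}$ of $Q'_W$: writing $z = \sum_{\ell(w)\le k} q_w\de_w$ with $q_w\in Q'$, the preceding lemma gives, for any $w_0$ of top length $k$ with $p_{w_0}\ne 0$, that $q_{w_0}$ equals $p_{w_0}$ times the (unit) factor $\prod_{\al\in\Phi_{w_0}}\tfrac{x_\al - x_\ga}{x_\al}$ up to the denominator $c_{w_0}$; in particular $x_\al\mid c_{w_0}$ with multiplicity one by Lemma~\ref{lem:cv}, and $x_\al\nmid$ the numerator corrections coming from the $\tfrac{x_\al-x_\ga}{x_\al}$ factors (here one uses that $x_\al - x_\ga$ is a unit modulo $x_\al$, since $x_\ga$ is a unit in $(S^F)'$ — wait, $x_\ga$ is not a unit, but $x_\al - x_\ga \notin (x_\al)$ because $x_\ga\notin (x_\al)$ as $\ga$ and $\al$ are independent in $\Ga\oplus\cl$, so it is regular mod $x_\al$, which is all that is needed).

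Then I would run the density argument of Lemma~\ref{thm:dense} and Lemma~\ref{lem:zprime} verbatim over $Z' := \Spec((S^F)')$: for each $\frakp\in V(x_\al)$ it suffices to show $2p_{w_0}\in\frakp$, and by density one may take $\frakp\in P_\al$, i.e.\ avoiding $V(x_\be)$ for $\be\ne\al$ and $Z'^w$ for $w\ne e,s_\al$. The $W$-action on $(S^F)'$ fixes the $\Ga$-coordinate, so the stabilizer/hyperplane analysis is unchanged — $V(x_\al)^+$ still meets no $V(x_\be)$ with $\be\ne\al$ and no $Z'^w$ with $w\ne e,s_\al$ — and the flatness of the completion map is again provided by \cite[Corollary I.2.8]{Mi80}. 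Choosing the separating elements $r_w$ and forming $r = \prod r_w^{n_w+1}$ as in the proof of Proposition~\ref{lem:keyalpha}, the hypothesis $z\cdot S'\subseteq x_\al S'$ gives $z\cdot r\in (x_\al)\subseteq \frakp_{\tilde S'}$, and the two cases $\de_\al z = \mp z$ are handled by the same manipulation: one isolates the $\de_{w_0}$- and $\de_{s_\al w_0}$-terms, uses the analogue of Lemma~\ref{lem:zprime} to discard $x_\al z'\cdot r$, and reads off $2\zeta - \tfrac{x_\al}{x_{-\al}}s_\al(2\zeta)\in\frakp_{\tilde S'}$ with $\zeta = \tfrac{p_{w_0}}{c'}w_0(r)$; since $\tfrac{x_\al}{x_{-\al}}\equiv -1$ and $s_\al(\zeta)\equiv\zeta$ modulo $x_\al$, this forces $2\zeta\in\frakp_{\tilde S'}$, whence $2p_{w_0}\in\frakp$. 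As $R$ has characteristic $0$, dividing by $2$ is harmless, so $x_\al\mid p_{w_0}$; subtracting $p_{w_0}T_{I_{w_0}}$ and inducting on the number of top-length terms, then on $k$, finishes the argument.

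\textbf{Main obstacle.} The only genuinely new point — and the step I would be most careful about — is checking that the extra variable $x_\ga$ does not spoil any divisibility or regularity statement modulo $x_\al$: concretely, that $x_\al$ is still prime in $(S^F)'$, that $c_{w_0}$ still has $x_\al$-multiplicity exactly one, and that the unit factors $\tfrac{x_\al - x_\ga}{x_\al}$ appearing when converting $T_{I_{w_0}}$ into the $\de$-basis contribute numerators that are regular (indeed nonzero) modulo $x_\al$. All three follow from the fact that $\ga$ spans a direct summand $\Ga$ complementary to $\cl$, so $x_\ga$ is a nonzerodivisor and a non-unit that is coprime to $x_\al$; once this is recorded, the rest of the proof is identical to that of Proposition~\ref{lem:keyalpha}, with $S$, $Q$, $Z$, $X_{I_w}$ replaced by $S'$, $Q'$, $Z'$, $T_{I_w}$ throughout.
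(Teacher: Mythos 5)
Your overall strategy matches the paper's: reduce to $\de_\al z=\pm z$, pass to the $\de_w$-basis, run the density argument over $\Spec(S')$, and note that the only new phenomenon is the factor $d_{w_0}=\prod_{\be\in\Phi_{w_0}}(x_\be-x_\ga)$ appearing in the leading coefficient $q_{w_0}=p_{w_0}d_{w_0}/c_{w_0}$. You also correctly identify the decisive property: $d_{w_0}\notin(x_\al)$ (equivalently, $d_{w_0}$ is regular modulo $x_\al$), because $\ga$ spans a direct summand complementary to $\cl$.

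However, there is a slip in the execution that keeps this from closing. When you isolate the $\de_{w_0}$- and $\de_{s_\al w_0}$-terms and write $\eta=2\zeta-\tfrac{x_\al}{x_{-\al}}s_\al(2\zeta)$, you set $\zeta=\tfrac{p_{w_0}}{c'}w_0(r)$, but in the Hecke case the coefficient in front of $\de_{w_0}$ is $\tfrac{p_{w_0}d_{w_0}}{x_\al c'}$, not $\tfrac{p_{w_0}}{x_\al c'}$. The quantity one actually reads off is $\zeta'=\tfrac{p_{w_0}d_{w_0}}{c'}w_0(r)$, and what the argument delivers is $2p_{w_0}d_{w_0}\in\frakp$, not $2p_{w_0}\in\frakp$. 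Your observation about $d_{w_0}$ being regular mod $x_\al$ is exactly the tool needed to finish, but it enters at a different spot than you placed it: one must first conclude by density that $2p_{w_0}d_{w_0}\in(x_\al)$, and only then use that $(x_\al)$ is prime and $d_{w_0}\notin(x_\al)$ to divide out $d_{w_0}$. As written, the jump "whence $2p_{w_0}\in\frakp$" does not follow.

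The paper handles the extra factor slightly differently: it further restricts the primes used in the density argument, choosing $\frakp\in V(x_\al)\setminus\bigcup_w V(d_w)$ (noting $V(x_\al)\not\subset V(d_w)$, and that only finitely many $w$ have $q_w\neq 0$), so that $d_{w_0}\notin\frakp$ and one gets $2p_{w_0}\in\frakp$ directly. Your route — keep the original $P_\al$, land on $2p_{w_0}d_{w_0}\in(x_\al)$, then divide using $d_{w_0}\notin(x_\al)$ — is an equally valid alternative, and it is essentially what your "regular mod $x_\al$" remark was reaching for; you just need to actually carry $d_{w_0}$ along in $\zeta'$ and insert the division step at the end.
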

\begin{proof}The proof is similar to that of Lemma~\ref{lem:keyalpha}. The only difference is that, if we write 
\[
z=\sum_{\ell(w)\le k}p_wT_{I_w}=\sum_{\ell(w)\le k}q_w\de_w, ~q_w\in Q',
\]
then $q_w=\frac{p_wd_w}{c_w}$ in the case $\ell(w)=k$, where $d_w:=\prod_{w\in \Phi_w}(x_\al-x_\ga)$. For any root $\al$ and $w\in W$, we have $V(x_\al)\not\subset V(d_w)$, where $V(q)$ denotes set of prime ideals of $S'$ containing $q$. 
So in the proof of Lemma \ref{lem:keyalpha}, we may assume that $\frakp\in V(x_\al)\backslash \cup_{w\in W}V(d_w)$. 
\end{proof}

We then obtain the following analogue of the structure theorem

\begin{theo}Assume that $R$ is a domain of characteristic 0.  Then we have 
\[\HF=\{z=\sum_{w}q_wT_w\in Q_W\mid q_w\in Q' \text{ and }z\cdot S'\subset S'\}.\] Moreover, $\HF$ is a free $S'$-module  with basis $\{T_{I_w}\}_{w\in W}$.
\end{theo}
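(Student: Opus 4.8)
The plan is to mimic the proof of the structure theorem for $\DcF$ (Theorem~\ref{thm:mainDem}), replacing Proposition~\ref{lem:keyalpha} with its Hecke analogue Lemma~\ref{lem:keyHecke}. First I would note one inclusion is trivial: by definition $\HF$ is generated by $S'$ and the $T_i$, all of which lie in $Q_W'$ and stabilize $S'$ (since $T_i\cdot q = x_\ga\Dem_i(q)+ s_i(q)\in S'$ for $q\in S'$, using that $x_\ga\in S'$ and $\Dem_i$ preserves $S'$), so $\HF\subseteq\{z\in Q_W'\mid z\cdot S'\subseteq S'\}$. The work is the reverse inclusion together with the basis statement.

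For the basis statement, I would first establish that $\{T_{I_w}\}_{w\in W}$ is an $S'$-basis (indeed a $Q'$-basis) of the $Q'$-span of $\HF$ inside $Q_W'$: by the preceding lemma, $T_{I_w}=\sum_{v\le w}q_{I_w,v}\de_v$ with leading coefficient $q_{I_w,w}=\prod_{\al\in\Phi_w}\frac{x_\al-x_\ga}{x_\al}$, which is a unit in $Q'$; hence the change-of-basis matrix from $\{T_{I_w}\}$ to $\{\de_w\}$ is triangular with invertible diagonal entries with respect to Bruhat order, so $\{T_{I_w}\}_{w\in W}$ is a $Q'$-basis of $Q_W'$. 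This simultaneously shows the $T_{I_w}$ are $S'$-linearly independent and that every element $z$ of $Q_W'$ with $z\cdot S'\subseteq S'$ can be written uniquely as $z=\frac{1}{p}\sum_{\ell(w)\le k}p_w T_{I_w}$ with $p_w\in S'$ and $p$ a product of (possibly repeated) $x_\al$, $\al\in\Phi$.

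Now I would run the divisibility/descent argument. Reducing to the case $p=x_\al$ irreducible (using Lemma~\ref{lem:irred} applied in $S'=R\lbr\Ga\oplus\cl\rbr_F$, which is again a formal group algebra of a lattice over the characteristic-zero domain $R$, so the lemma applies verbatim), the hypothesis $z\cdot S'\subseteq S'$ gives $(\frac{1}{x_\al}\sum p_w T_{I_w})\cdot S'\subseteq S'$, i.e.\ $(\sum p_w T_{I_w})\cdot S'\subseteq x_\al S'$. By Lemma~\ref{lem:keyHecke}, $x_\al\mid p_w$ for all $w$, so the $\frac{1}{x_\al}$ cancels and $z$ is an $S'$-combination of the $T_{I_w}$; iterating on the remaining factors of $p$ shows $z=\sum_w p_w T_{I_w}$ with $p_w\in S'$, which lies in $\HF$ since $\HF$ is closed under multiplication and contains $S'$ and the $T_i$ — here I need that products $T_{I_w}$ of the generators $T_i$ (for a reduced word for $w$) are in $\HF$, which is immediate from the definition. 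This proves $\{z\in Q_W'\mid z\cdot S'\subseteq S'\}\subseteq\HF$, and combined with the first paragraph gives the stated equality; the basis statement follows since $\{T_{I_w}\}$ spans $\HF$ over $S'$ and is $S'$-linearly independent by the triangularity above.

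The main obstacle, as in the Demazure case, is packaged inside Lemma~\ref{lem:keyHecke}, whose proof is asserted to parallel Proposition~\ref{lem:keyalpha}: one must locate primes $\frakp\in V(x_\al)\subset\Spec S'$ avoiding $V(x_\be)$ for $\be\neq\al$, avoiding $Z^w$ for $w\neq e,s_\al$, and now \emph{additionally} avoiding $V(d_w)$ for all $w$; the point flagged in the excerpt is that $V(x_\al)\not\subseteq V(d_w)$ since $d_w=\prod(x_\al-x_\ga)$ is not a multiple of $x_\al$ (the extra variable $x_\ga$ makes $x_\al-x_\ga$ coprime to $x_\al$), so the general-position Lemma~\ref{thm:dense}-type density still applies after removing these finitely many extra closed sets. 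Beyond invoking that lemma, the remaining steps are formal bookkeeping that transfer without change, so I do not expect genuine difficulty there; the one point requiring a line of care is checking that $T_i$ indeed stabilizes $S'$ (not just $Q'$), which follows from $x_\ga\in S'$ and $\Dem_i(S')\subseteq S'$.
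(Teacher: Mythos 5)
Your overall strategy — reduce to the irreducible case via Lemma~\ref{lem:irred} in $S'$, apply Lemma~\ref{lem:keyHecke}, iterate — is the same as the paper's, and the reverse inclusion and linear-independence parts are handled correctly. However, there is a genuine error in your handling of the forward inclusion (and of the basis statement's ``spanning'' half).

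You claim that $q_{I_w,w}=\prod_{\al\in\Phi_w}\frac{x_\al-x_\ga}{x_\al}$ is a unit in $Q'$, and hence that $\{T_{I_w}\}$ is a $Q'$-basis of all of $Q_W'$. This is false: $Q'=S'[\tfrac{1}{x_\al}\mid\al\in\ro]$ only inverts the $x_\al$ for real roots $\al$, and $x_\al-x_\ga$ is not a unit there. Indeed, passing to the associated graded $\Gr S'\cong S_R^*(\Ga\oplus\cl)$, the leading term of $x_\al-x_\ga$ is $\al-\ga$, which has a nonzero $\Ga$-component and so cannot equal (a unit times) any product of roots; thus $x_\al-x_\ga$ is an irreducible of $S'$ not associate to any $x_\be$, hence not invertible in $Q'$. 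Concretely, $\de_i=\tfrac{x_i}{x_i-x_\ga}T_i-\tfrac{x_\ga}{x_i-x_\ga}$ is \emph{not} in the $Q'$-span of $\{1,T_i\}$, so $\{T_{I_w}\}$ spans a proper $Q'$-submodule of $Q_W'$. The triangularity does give $Q'$-linear independence (the diagonal entries are nonzero-divisors), but not a basis, and you have conflated the two. In particular, your statement ``every element $z$ of $Q_W'$ with $z\cdot S'\subseteq S'$ can be written as $\frac{1}{p}\sum p_wT_{I_w}$'' is wrong (take $z=\de_i$). This is precisely why the theorem's right-hand side is phrased as the set of $z=\sum_w q_wT_w$ with $q_w\in Q'$ (a restriction), rather than as $\{z\in Q_W'\mid z\cdot S'\subseteq S'\}$, in contrast with the Demazure case.

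As a consequence, your argument has a gap for the inclusion $\HF\subseteq\text{RHS}$: you must show that every element of the subalgebra $\HF$ (generated by $S'$ and the $T_i$) can be rewritten as a $Q'$-linear combination of $T_{I_w}$ for \emph{reduced} words $I_w$. This does not follow from any triangularity statement about $Q_W'$; it requires the quadratic relation $T_i^2=x_\ga\ka_iT_i+1-x_\ga\ka_i$, the commutation rule $T_iq=s_i(q)T_i+x_\ga\Dem_i(q)$, and the twisted braid-type relations among the $T_i$. The paper outsources exactly this step to \cite[Proposition~6.8]{HMSZ}. Once that is in place, your reduction $z=\tfrac{1}{x_\al}\sum p_wT_{I_w}$, the application of Lemma~\ref{lem:keyHecke}, and the descent argument complete the proof as you describe, and the $S'$-freeness then follows from linear independence plus the just-established spanning over $Q'$ combined with the fact (from Lemma~\ref{lem:keyHecke}) that the coefficients $q_w$ of any $z\in\HF$ actually lie in $S'$.
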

\begin{proof}
	It follows from \cite[Proposition 6.8]{HMSZ} that each element of $\HF$ can be written as a linear combination $z=\sum_{w\in W}q_wT_{I_w}$ with $q_w\in (Q^F)'$. Moreover, since $T_i\cdot S'\subseteq S'$, so $\HF\cdot S'\subseteq S'$. Therefore, 
\[
\HF\subseteq\{z=\sum_{w\in W}q_wT_{I_w}\mid q_w\in Q',\; z\cdot S'\subseteq S'\}.
\] 
To prove the opposite inclusion,  we replace $S$ by $S'$ and $X_{I_w}$ by $T_{I_w}$ in the proof of Theorem \ref{thm:mainDem}, and use Lemma~\ref{lem:keyHecke}. 

Finally, similar to~Lemma~\ref{lem:deltatoX}, we show that $\{T_{I_w}\}_{w\in W}$ are linearly independent. 
Moreover, Lemma~\ref{lem:keyHecke} also shows that if $z=\sum_wq_wT_{I_w}\in \HF$, then $q_w\in S'$. So $\{T_{I_w}\}$ is a $S'$-basis of $\HF$.
\end{proof}
The proof of the following corollary is similar to the finite case proven in~\cite{Zh}.
\begin{cor}The ring homomorphism $\HF\to \End_{R_F}(S')$ is injective, and the centre of $\HF$ is $(S')^W$.
\end{cor}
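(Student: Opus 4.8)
The plan is to mimic the argument that works in the finite case (see \cite{Zh}), using the structure theorem just proved for $\HF$ as the main input. For injectivity of the map $\HF\to \End_{R_F}(S')$: an element $z\in \HF$ in the kernel acts as $0$ on $S'$, so in particular $z\cdot S'\subseteq x_\al S'$ for every positive root $\al$. Writing $z=\sum_{\ell(w)\le k}p_w T_{I_w}$ with $p_w\in S'$ (which is legitimate since $\{T_{I_w}\}_{w\in W}$ is an $S'$-basis of $\HF$), Lemma~\ref{lem:keyHecke} gives $x_\al\mid p_w$ for all $w$. Then $z':=\frac{1}{x_\al}z$ still lies in $Q_W'$, still satisfies $z'\cdot S'\subseteq S'$ (hence $z'\in\HF$ by the structure theorem), and still kills $S'$, so we may repeat the divisibility step with $z'$ in place of $z$. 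Iterating, $x_\al^m\mid p_w$ for all $m$, which forces $p_w=0$ since $S'$ is a domain with $x_\al$ a nonzero nonunit; thus $z=0$. (This is exactly the argument of Corollary~\ref{cor:injDem}, transported verbatim.)

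For the centre, the plan is to first show $(S')^W\subseteq Z(\HF)$ and then the reverse inclusion. The easy direction: an element $q\in (S')^W$ commutes with every $\de_w$ hence with every $X_\al$ (since $X_\al q=s_\al(q)X_\al+\Dem_\al(q)=qX_\al$ when $q$ is $s_\al$-invariant, using that $\Dem_\al$ is $S^{W_\al}$-linear and kills invariants), and therefore with each $T_i=x_\ga X_i+\de_i$; since $\HF$ is generated by $S'$ and the $T_i$, and $q$ centralizes $S'$ trivially, $q\in Z(\HF)$. Note $x_\ga$ itself is $W$-invariant since $W$ acts trivially on $\Ga$, so this is consistent. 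For the reverse inclusion, take $z=\sum_w q_w\de_w\in Z(\HF)$ with $q_w\in Q'$. First, $z$ must commute with all $q\in S'$; using $\de_w q=w(q)\de_w$ and comparing coefficients of $\de_w$ in $zq=qz$ gives $q_w(w(q)-q)=0$ for all $q\in S'$, so $q_w=0$ unless $w$ fixes $S'$ pointwise. Since $\cl$ separates points of $W$ acting faithfully (the $W$-action on the Demazure lattice is faithful, being faithful already on $\rl^\vee$'s dual), only $w=e$ survives, so $z=q_e\in Q'$. Then $z$ commutes with $T_i$: from $T_iq-s_i(q)T_i=x_\ga\Dem_i(q)$ we get $0=T_i z - z T_i = (z-s_i(z))T_i - \big(\text{lower terms}\big)$; more precisely, writing it out, commutation with $T_i$ forces $x_\ga\Dem_i(z)=0$ hence $\Dem_i(z)=0$ (as $x_\ga$ is a nonzerodivisor), i.e. $s_i(z)=z$ for all $i$, so $z\in (Q')^W$. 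Finally one checks $(Q')^W\cap \HF=(S')^W$: an element of $(Q')^W$ lying in $\HF$ has, by the structure theorem, all its $\de_w$-coefficients equal to $0$ except the identity one, which must then lie in $S'$ and be $W$-invariant.

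I expect the main obstacle to be the last step of the centre computation, namely pinning down $(Q')^W\cap\HF$ and making the "commutes with $T_i$ $\Rightarrow$ $\Dem_i(z)=0$" deduction fully rigorous: one has to be careful that $z\in Q'$ commuting with $T_i$ genuinely yields $\Dem_i(z)=0$ and not merely a relation modulo lower-order terms, and that $x_\ga$ is a nonzerodivisor in $Q'$ (which holds since $(S^F)'=R\lbr\Ga\oplus\cl\rbr_F$ is a domain when $R$ is a domain of characteristic $0$, as $x_\ga$ generates a polynomial-type variable). The identification $(Q')^W\cap\HF=(S')^W$ should follow cleanly from the structure theorem together with the faithfulness of the $W$-action on the lattice $\cl$, but one should state explicitly why the coefficients $q_w$ for $w\ne e$ vanish — this uses that a $W$-invariant element, expanded in the $\de_w$-basis, can only be supported on $w=e$ after the first centralizing-$S'$ step has already been applied. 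Everything else is a routine transcription of the finite-case proof in \cite{Zh}.
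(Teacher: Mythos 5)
Your proposal is correct and follows the route the paper intends: the paper gives no explicit proof but cites \cite{Zh} for the finite case, and your argument is a faithful transcription of that proof, using the structure theorem and Lemma~\ref{lem:keyHecke} in exactly the way the paper's Corollary~\ref{cor:injDem} and the analogous centre computation would be adapted. The only small imprecision is in the ``commutes with $T_i$'' step: rather than reading off $x_\ga\Dem_i(z)=0$ directly from $T_iz-zT_i=0$, one should expand $T_i=\tfrac{x_\ga}{x_i}+\tfrac{x_i-x_\ga}{x_i}\de_i$ and equate the $\de_i$-coefficient, which gives $(s_i(z)-z)\tfrac{x_i-x_\ga}{x_i}=0$ in the domain $Q'$, hence $s_i(z)=z$ and then $\Dem_i(z)=0$ automatically; you flag this concern yourself, and the fix is exactly this coefficient comparison.
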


\section{Formal root polynomials and the Pieri-type formula}

In the present section we generalize the notion of a formal root polynomial introduced in \cite{LZ} and discuss the multiplication rule
in $\DcFd$.

Let $\cl$ be a Demazure lattice and let $S$ and $Q$ be the associated formal group algebra and its localization.
Following \cite[\S2]{LZ} consider the ring $Q_W[[\cl]]:=S\otimes_R Q_W$, where the elements of $S$ on the left (denoted by $y$'s) commute with the elements of $Q_W$. Given $w\in W$ and a reduced word $I_w=s_{i_1}\ldots s_{i_l}$ we set
\[
\mathcal{R}_{I_w}^X:=\prod_{k=1}^l h_{i_k}^X(s_{i_1}\ldots s_{i_{k-1}}(\al_{i_k})),\text{ where }h_i^X(\la)=1-y_{-\la}X_i, \text{ and }
\]
\[
\mathcal{R}_{I_w}^Y:=\prod_{k=1}^l h_{i_k}^Y(s_{i_1}\ldots s_{i_{k-1}}(\al_{i_k})),\text{ where }h_i^Y(\la)=1-y_{\la}Y_i.
\]

Consider the evaluation map $ev\colon Q_W[[\cl]] \to Q_W$ induced by $y_\la \mapsto x_{-\la}$. Then by the same proof as of \cite[Lemma~3.3]{LZ}) we obtain
\[
ev(\mathcal{R}_{I_w}^X)=\delta_w\text{ and }ev(\mathcal{R}_{I_w}^Y)=\theta_{w}\delta_w,\]
where
\[
\theta_{w}=\prod_{\al\in \Sigma_w} \theta(\al),\quad \theta(\la)=\tfrac{-x_{-\la}}{x_\la}.
\]
Moreover, by the same arguments as in \cite[\S3.2]{LZ} we obtain that $\mathcal{R}_{I_w}$ is independent of a choice of the reduced word $I_w$ iff the formal group law $F$ is hyperbolic  (i.e., $F(x,y)=\frac{x+y-\mu_1xy}{1+\mu_2xy}$ and $R=\Z[\mu_1,\mu_2]$) and $2$ is regular in $R$.

Indeed, if the Coxeter exponent $m_{ij}= 2,3,4,6$, we apply the same arguments as in \cite[Prop~3.2]{LZ}.
If $m_{ij}$ is not from that list, then there is no relation between $s_i$ and $s_j$ which does not affect $\mathcal{R}_{I_w}$.
The elements $\mathcal{R}_w^X=\mathcal{R}_{I_w}^X$ and $\mathcal{R}_w^Y=\mathcal{R}_{I_w}^Y$ of $Q_W[[\cl]]$ will be called formal root polynomials.

As in \cite[\S4]{LZ} the coefficients $K^X(I_v,w)$  (resp. $K^Y(I_v,w)$) in the expansions
\[
\mathcal{R}_w^Y = \sum_{v\le w} K^Y(I_v,w) Y_{I_w},\quad \mathcal{R}_w^X = \sum_{v\le w} K^X(I_v,w) X_{I_w}
\]
satisfy the following 
\begin{lem}(cf. \cite[Thm.~4.4]{LZ})\label{lem:hyp} Suppose that $F$ is a hyperbolic formal group law. We have 
\[
b^{Y}_{w,I_v}=\tau(\theta_w ev(K^Y(I_v,w)))\text{ and }b^X_{w,I_v}=\tau(ev(K^X(I_v,w))),
\]
where $\tau$ is an involution $x_\la\mapsto x_{-\la}$ and $b_{w,I_v}$ are coefficients in the expansion:
\[
\delta_w=\sum_{v\le w}b_{w,I_v}^YY_{I_v}=\sum_{v\le w}b_{w,I_v}^X X_{I_v}.
\]
\end{lem}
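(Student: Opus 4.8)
The plan is to mirror the argument of \cite[Thm.~4.4]{LZ} in the present Kac--Moody setting, the key point being that all ingredients used in \emph{loc.\ cit.} have already been established here for an arbitrary generalized Cartan matrix. First I would recall the two expansions: applying the evaluation map $ev$ to $\mathcal{R}_w^X = \sum_{v\le w} K^X(I_v,w) X_{I_v}$ and using $ev(\mathcal{R}_{I_w}^X)=\delta_w$ yields $\delta_w = \sum_{v\le w} ev(K^X(I_v,w))\, X_{I_v}$, and similarly $\delta_w = ev(\mathcal{R}_w^Y) = \theta_w\,\delta_w\cdot\theta_w^{-1} = \ldots$; more carefully, from $ev(\mathcal{R}_{I_w}^Y)=\theta_w\delta_w$ one gets $\theta_w\delta_w = \sum_{v\le w} ev(K^Y(I_v,w))\, Y_{I_v}$, hence $\delta_w = \sum_{v\le w} \theta_w^{-1}\,ev(K^Y(I_v,w))\, Y_{I_v}$ after multiplying by $\theta_w^{-1}\in Q$ (note $\theta_w$ is a unit in $Q$, being a product of the units $\theta(\la) = -x_{-\la}/x_\la$). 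Wait --- one must be careful about left versus right coefficients here; the cleanest route, exactly as in \cite{LZ}, is to compare the two known expansions of $\delta_w$ in the $X_{I_v}$- (resp.\ $Y_{I_v}$-) basis of $Q_W$ guaranteed by Corollary~\ref{cor:basisQW}.

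Next, the involution $\tau\colon x_\la\mapsto x_{-\la}$ on $S$ (equivalently on $Q$, since it permutes the inverted roots) is the device that matches the coefficients coming from $ev$ with those coming from the reversal of a reduced word. Concretely, the Bruhat-order expansion $\delta_w = \sum_{v\le w} b^X_{w,I_v} X_{I_v}$ from Lemma~\ref{lem:deltatoX} (with $b^X_{w,I_w} = (-1)^{\ell(w)} c_w$) has coefficients in $S$, while $ev$ produces coefficients that a priori lie in $Q$; the content of the lemma is that $b^X_{w,I_v} = \tau(ev(K^X(I_v,w)))$ and $b^Y_{w,I_v} = \tau(\theta_w\, ev(K^Y(I_v,w)))$. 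The proof is to observe that $\tau$ intertwines the defining recursion for the $\mathcal{R}$-polynomials with the straightening recursion for $\delta_w$ in the $X$- (resp.\ $Y$-) basis: the building block $h_i^X(\la) = 1 - y_{-\la}X_i$ evaluates under $ev$ to $1 - x_\la X_i$, and $\tau$ of this is $1 - x_{-\la}X_i$, which is precisely (a twist of) the relation $\delta_i = 1 - x_i X_i$ moved by $w$; an induction on $\ell(w)$ along a reduced word, using $X_iq = s_i(q)X_i + \Delta_i(q)$ and the braid-type relations of Corollary~\ref{cor:present}, then propagates the identity. The $Y$-case is identical once one tracks the extra factor $\theta_w$ coming from $ev(\mathcal{R}_{I_w}^Y)=\theta_w\delta_w$ versus $ev(\mathcal{R}_{I_w}^X)=\delta_w$.

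I would organize the write-up as: (i) fix $w$ and a reduced word $I_w$; (ii) expand $\delta_w$ two ways and equate using the basis property; (iii) apply $\tau$ and verify the base case $\ell(w)=1$ by hand; (iv) run the induction, citing Corollary~\ref{cor:present} for independence of reduced word (which is where hyperbolicity and regularity of $2$ are used, already assumed in the statement) and Lemma~\ref{lem:deltatoX} for the leading coefficients. The main obstacle I anticipate is purely bookkeeping: one must be scrupulous about the non-commutativity in $Q_W[[\cl]]$ (the $y$'s commute with $Q_W$ but the $X_i$'s and $\delta$'s do not), about whether coefficients are written on the left or the right, and about the direction in which $\tau$ and word-reversal act --- the identity only comes out clean because these two ``reversals'' cancel. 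Since every structural input (the $X$- and $Y$-bases of $Q_W$, the expansion of $\delta_w$, the reduced-word independence of $\mathcal{R}_w$ under the hyperbolic hypothesis, and the formula $ev(\mathcal{R}_{I_w}^Y)=\theta_w\delta_w$) is already in place for general Kac--Moody data, no new conceptual difficulty arises beyond what is handled in \cite{LZ}, and the proof reduces to verifying that the $\tau$-twisted induction goes through verbatim.
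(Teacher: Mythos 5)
The paper offers no proof of this lemma at all --- it is stated with ``cf.\ \cite[Thm.~4.4]{LZ}'' and the burden of the argument is carried entirely by the two facts recalled just above it: $ev(\mathcal{R}^X_{I_w})=\delta_w$, $ev(\mathcal{R}^Y_{I_w})=\theta_w\delta_w$, and the freeness of the $X_{I_v}$- and $Y_{I_v}$-bases (Corollary~\ref{cor:basisQW}). So there is no ``paper's proof'' to match against; one simply checks that the argument of \cite{LZ} transports.

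Your opening steps are the right ones and in fact already constitute the entire proof: apply $ev$ to $\mathcal{R}^X_w=\sum_{v\le w}K^X(I_v,w)X_{I_v}$ (legitimate since the $y$-coefficients are central and $ev$ is $S$-linear on the $X_{I_v}$-basis), obtain $\delta_w=\sum_{v\le w}ev(K^X(I_v,w))X_{I_v}$, and compare with $\delta_w=\sum_{v\le w}b^X_{w,I_v}X_{I_v}$; uniqueness of the basis expansion finishes it, and the $Y$-case is identical after multiplying by $\theta_w^{-1}$. The subsequent machinery you invoke --- an induction on $\ell(w)$, the braid-type relations of Corollary~\ref{cor:present}, a ``recursion-intertwining'' mediated by $\tau$ and ``word-reversal'' --- is not needed and does not correspond to any step in the argument.

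More importantly, the place where you try to explain the $\tau$ contains an error that your proposed base-case check should have caught. You write that $ev(h_i^X(\la))=1-x_\la X_i$ and that applying $\tau$ yields $1-x_{-\la}X_i$, ``which is precisely (a twist of) the relation $\delta_i=1-x_iX_i$.'' But $\delta_i=1-x_{\al_i}X_i$, so $ev(h_i^X(\al_i))=1-x_{\al_i}X_i$ is already $\delta_i$ \emph{before} any $\tau$, and $1-x_{-\al_i}X_i$ is not $\delta_i$. Running the comparison honestly for $w=s_i$ gives $b^X_{s_i,I_{s_i}}=-x_{\al_i}=ev(K^X(I_{s_i},s_i))$ with no $\tau$, and likewise $b^Y_{s_i,I_{s_i}}=x_{\al_i}=\theta_{s_i}^{-1}ev(K^Y(I_{s_i},s_i))$. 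So the direct comparison produces $b^X_{w,I_v}=ev(K^X(I_v,w))$ and $b^Y_{w,I_v}=\theta_w^{-1}ev(K^Y(I_v,w))$; reconciling these with the $\tau$'s as printed in the statement (note $\tau(\theta_w)=\theta_w^{-1}$) is a matter of pinning down the sign conventions inherited from \cite{LZ}, not something an extra induction can supply. Your write-up should confront this mismatch head-on rather than asserting that ``the two reversals cancel,'' which, as the $\ell(w)=1$ computation shows, they do not.
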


Similar to \cite[Prop. 4.7]{LZ} we obtain the following formula for the multiplication:
\begin{lem} We have in $\DcFd$
\[
Y_{I_u}^* \cdot Y_{I_v}^* = \sum_{w\ge u,v} p^{I_w}_{I_u,I_v} Y_{I_w}^*,
\]
where the coefficients $p^{I_w}_{I_u,I_v}$ satisfy
\begin{equation}\label{eq:rec}
p^{I_w}_{I_u,I_v}=\tfrac{1}{b^Y_{w,I_w}}(b^Y_{w,I_u}b^Y_{w,I_v}-\sum_{u,v\le t<w} p^{I_t}_{I_u,I_v} b^Y_{w,I_t}).
\end{equation}
\end{lem}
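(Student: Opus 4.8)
The statement to prove is the Pieri-type multiplication formula for the dual basis $\{Y_{I_w}^*\}$ in $\DcFd$, together with the recursion \eqref{eq:rec} for the structure constants $p^{I_w}_{I_u,I_v}$. The plan is to compute the product $Y_{I_u}^*\cdot Y_{I_v}^*$ by pairing it against the basis $\{\delta_w\}$ of $\DcF$ (or equivalently against $\{Y_{I_w}\}$), using the coproduct $\Tr$ on $\DcF$ that dualizes the product on $\DcFd$. Concretely, for $f,g\in\DcFd$ one has $(f\cdot g)(z)=(f\otimes g)(\Tr(z))$, and since $\Tr(\delta_w)=\delta_w\otimes\delta_w$ is group-like, evaluating on $\delta_w$ gives $(Y_{I_u}^*\cdot Y_{I_v}^*)(\delta_w)=Y_{I_u}^*(\delta_w)\cdot Y_{I_v}^*(\delta_w)=b^Y_{w,I_u}\,b^Y_{w,I_v}$, where $b^Y_{w,I_v}$ are exactly the coefficients in the expansion $\delta_w=\sum_{v\le w}b^Y_{w,I_v}Y_{I_v}$ from Lemma~\ref{lem:hyp}. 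This identity holds for all $w$, so it determines $Y_{I_u}^*\cdot Y_{I_v}^*$ completely once we know that the product is supported on $\{I_w\}$ with $w\ge u,v$.

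First I would establish the support condition: $Y_{I_u}^*(Y_{I_w})=0$ unless $u\le w$ (this is immediate from the triangularity in Lemma~\ref{lem:deltatoX}, since $Y_{I_w}=\sum_{v\le w}a^Y_{w,v}\delta_v$ and $Y_{I_v}^*$ is the dual of $\{Y_{I_w}\}$), so $Y_{I_u}^*$ lies in the span of $\{\delta_t:t\ge u\}^*$; the product of $Y_{I_u}^*$ and $Y_{I_v}^*$ is then supported on $w$ with $w\ge u$ and $w\ge v$. Hence we may write $Y_{I_u}^*\cdot Y_{I_v}^*=\sum_{w\ge u,v}p^{I_w}_{I_u,I_v}Y_{I_w}^*$ for some coefficients $p^{I_w}_{I_u,I_v}$, a priori in the fraction field but in fact in $S$ once everything is shown to lie in $\DcFd$. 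Next I would unwind what "coefficient of $Y_{I_w}^*$" means: pairing both sides against $\delta_w$ and using $Y_{I_t}^*(\delta_w)=b^Y_{w,I_t}$ gives
\[
b^Y_{w,I_u}\,b^Y_{w,I_v}=\sum_{t:\,u,v\le t\le w}p^{I_t}_{I_u,I_v}\,b^Y_{w,I_t}.
\]
Separating the $t=w$ term (whose coefficient is $b^Y_{w,I_w}=(-1)^{\ell(w)}c_w$, a nonzero element of $Q$ by Lemma~\ref{lem:deltatoX}, hence invertible in $Q_W$) and solving for $p^{I_w}_{I_u,I_v}$ yields exactly \eqref{eq:rec}. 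The recursion is then well-founded by induction on the Bruhat interval, starting from $p^{I_u}_{I_u,I_u}\cdot(\text{stuff})=\ldots$ in the base case when $u=v$ is minimal.

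The main obstacle I anticipate is not the algebra of the recursion — that is formal — but rather justifying that $p^{I_w}_{I_u,I_v}$ actually lies in $S$ rather than merely in $Q$, i.e. that $Y_{I_u}^*\cdot Y_{I_v}^*$ is genuinely an element of $\DcFd$ and not just of $Q_W^*$. This requires knowing that $\DcFd$ is closed under the product, which in turn relies on the fact that $\Tr$ maps $\DcF$ into $\DcF\otimes_S\DcF$ (the coproduct lemma on $\DcF$ cited in the excerpt) together with the identification of $\DcFd$ as $\{f\in Q_W^*\mid f(\DcF)\subseteq S\}$. Granting those, closure is automatic: $(f\cdot g)(\DcF)=(f\otimes g)(\Tr(\DcF))\subseteq(f\otimes g)(\DcF\otimes_S\DcF)\subseteq S$. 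With this in hand, the coefficients $p^{I_w}_{I_u,I_v}$ obtained from the triangular system lie in $S$ because $b^Y_{w,I_w}$ divides the relevant combination (equivalently, one solves the unitriangular-up-to-$c_w$ system over $Q$ and then checks $\DcFd$-membership forces the answer into $S$). A secondary, more bookkeeping-type point is tracking the dependence on the chosen reduced words $I_u,I_v,I_w$ throughout, which is handled exactly as in \cite[\S4]{LZ} and causes no real difficulty. So the proof reduces to: (1) support via triangularity, (2) pair against $\delta_w$ using group-likeness of $\Tr(\delta_w)$ and the formula $Y_{I_t}^*(\delta_w)=b^Y_{w,I_t}$, (3) invert the triangular system to extract \eqref{eq:rec}, and (4) invoke coproduct-closure of $\DcF$ to land the coefficients in $S$.
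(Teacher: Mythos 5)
Your proposal is correct and follows the same route the paper defers to, namely \cite[Prop.~4.7]{LZ}: pair the product against the group-like basis $\{\delta_w\}$ to get $(Y^*_{I_u}\cdot Y^*_{I_v})(\delta_w)=b^Y_{w,I_u}\,b^Y_{w,I_v}$, then invert the lower-triangular system (the leading coefficient $b^Y_{w,I_w}=c_w$ is a unit in $Q$) and invoke closure of $\DcFd$ under the product to place each $p^{I_w}_{I_u,I_v}$ in $S$. Two minor slips, neither of which affects the argument: $b^Y_{w,I_w}=c_w$ carries no sign $(-1)^{\ell(w)}$ (that sign belongs to the $X$-version in Lemma~\ref{lem:deltatoX}), and the expansion $\delta_w=\sum_{v\le w}b^Y_{w,I_v}Y_{I_v}$ is the unnumbered $Y$-analogue of Lemma~\ref{lem:deltatoX}, valid for any $F$, not Lemma~\ref{lem:hyp} (which is the hyperbolic-only root-polynomial formula for those coefficients).
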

Indeed, $p^{I_w}_{I_u,I_v}=0$ if either $u$ is not a subword of $w$ or $v$ is not a subword of $w$.
Therefore, we can compute all the coefficients recursively starting with $w=u$ (for $v\le u$, $p_{I_u,I_v}^{u}=b_{u,I_v}^Y$) and going up in length of $w$. Moreover, by Lemma~\ref{lem:hyp} one can find the coefficients $b_{u,I_v}^Y$ by expanding the formal root polynomial in the hyperbolic case.

Observe that the same recurrent formula holds with $Y$'s replaced by $X$'s.

\begin{rem}
For an additive $F$ 
there is a closed formula for $b_{u,v}^Y$ given by Billey \cite[Theorem~4]{Bi99} (see also \cite[(47)]{LZ})
\[
b_{u,v}^Y=\sum_{1\le j_1<\ldots <j_k \le l} \beta_{j_1}\ldots \beta_{j_k},
\]
where $I_u=(i_1,\ldots,i_l)$, $\beta_j=s_{i_1}\ldots s_{i_{j-1}}(\al_{i_j})$, $k=\ell(v)$ and the summation
ranges over the integer sequences for which the subword $(i_{j_1},\ldots,i_{j_k})$ is a reduce word for $v$.
For a multiplicative $F$ there are closed formulas given by Graham-Willems (see \cite[\S5.2]{LZ}).
\end{rem}

\begin{example} 
Let $F$ be the hyperbolic formal group law.  Using the formula~\eqref{eq:rec} we find $p_{e,e}^e=1$, $p_{s_i,e}^{s_i}=b^Y_{s_i,e}=\tfrac{-x_i}{x_{-i}}$, $p_{s_i,s_i}^{s_i}=b^Y_{s_i,s_i}=x_i$ and 
\[
p_{e,e}^{s_i}=\tfrac{1}{x_i}\big((\tfrac{-x_i}{x_{-i}})^2- \tfrac{-x_i}{x_{-i}}\big)=\tfrac{x_i}{x_{-i}}\kappa_i \in S,\quad \text{where}\; \kappa_i=\tfrac{1}{x_i}+\tfrac{1}{x_{-i}}.
\]
Combining we obtain
\[
Y_e^* \cdot Y_e^*=Y_e^*+\sum_{i} (\tfrac{x_i}{x_{-i}}\kappa_i) Y_{i}^*+\sum_{ \ell(w)\ge 2} p_{e,e}^w Y_{I_w}^*\text{ and}
\]
\[
Y_i^* \cdot Y_{e}^*=(\tfrac{-x_i}{x_{-i}}) Y_i^*+ \sum_{w> s_i} p_{s_i,e}^w Y_{I_w}^*.
\]

Observe that for a general formal group law $Y_e^*\neq \mathbf{1}$ in $\DcFd$. In the finite case, $Y_e^*$ is the class of the Bott-Samelson resolution
of $G/B$ corresponding to some reduced expression of the element of maximal length (it does not necessarily coincides with the fundamental class).
For the usual cohomology (in the finite case) $Y_{I_w}^*$ are the Poincar\'e duals to the usual Schubert classes. In particular, $Y_i^*$ corresponds to the class of a divisor.
\end{example}

\begin{example}
Consider $s_js_i$ with $j\neq i$. Set $x_{j(i)}:=x_{s_j(\alpha_i)}$.
Then $b^Y_{w,w}=x_jx_{j(i)}$, $b^Y_{w,s_i}=-\tfrac{x_jx_{j(i)}}{x_{-j}}$, $b^Y_{w,s_j}=-\tfrac{x_jx_{j(i)}}{x_{-j(i)}}$ and $b^Y_{w,e}=\tfrac{x_jx_{j(i)}}{x_{-j}x_{-j(i)}}$.
Using the formula~\eqref{eq:rec} we obtain
\begin{gather*}
p_{s_i,s_i}^{s_js_i}=\tfrac{1}{b^Y_{w,w}}\big( (b^Y_{w,s_i})^2 - b^Y_{s_i,s_i}b^Y_{w,s_i}\big)=\tfrac{1}{x_{-j}}(\tfrac{x_jx_{j(i)}}{x_{-j}}+x_i),\\
p_{s_i,s_j}^{s_js_i}=p_{s_j,s_i}^{s_js_i}=\tfrac{1}{b^Y_{w,w}}b^Y_{w,s_i}b^Y_{w,s_j}=\tfrac{x_jx_{j(i)}}{x_{-j}x_{-j(i)}},\\
p_{s_j,s_j}^{s_js_i}=\tfrac{1}{b^Y_{w,w}}\big( (b^Y_{w,s_j})^2 - b^Y_{s_j,s_j}b^Y_{w,s_j}\big)=\tfrac{1}{x_{-j(i)}}(\tfrac{x_jx_{j(i)}}{x_{-j(i)}}+x_j).
\end{gather*}
Therefore, we obtain in $\DcFd$
\begin{align*}
Y_i^*\cdot Y_i^*=x_iY_i^*+& \sum_{s_is_j=s_js_i} x_i\tfrac{x_j}{x_{-j}}\kappa_j Y_{s_js_i}^*\\
+& \sum_{s_is_j\neq s_js_i}  [x_i\tfrac{x_{i(j)}}{x_{-i(j)}}\kappa_{i(j)}Y_{s_is_j}^*+\tfrac{1}{x_{-j}}(\tfrac{x_jx_{j(i)}}{x_{-j}}+x_i)Y_{s_js_i}^*]\\
+& \sum_{\ell(w)\ge 3} p_{s_i,s_i}^{w}Y_{I_w}^*.
\end{align*}
In particular, for an additive $F$ we obtain in $\nila^*$
\[
Y_i^*\cdot Y_i^* = \alpha_i Y_i^* + \sum_{s_is_j\neq s_js_i} (-\al_j^\vee(\al_i))Y_{s_js_i}^*.
\]
Observe that the general formula for a multiplication by $Y_i^*$ in the additive case was obtained in \cite[(4.30)]{KK86}.
\end{example}

\end{document}